\documentclass[11pt,a4paper]{article}
\usepackage[pdftex]{graphicx}
\usepackage{amsmath,amssymb,amsfonts,amsthm}
\numberwithin{equation}{section}
\usepackage{indentfirst}
\usepackage{enumitem} 
\usepackage[margin=1.3in]{geometry}
\usepackage{fancyhdr}
\usepackage{makecell}

\usepackage[colorlinks=true,linkcolor=blue,citecolor=red,urlcolor=cyan]{hyperref}
\usepackage{titlesec}
\usepackage{etoolbox}
\usepackage{graphicx}  
\usepackage{changepage}
\theoremstyle{plain}
\newtheorem{theorem}{Theorem}[section]
\newtheorem{lemma}[theorem]{Lemma}
\newtheorem{corollary}[theorem]{Corollary}

\newtheorem{definition}[theorem]{Definition}
\newtheorem{remark}{Remark}[section]

\makeatletter
\renewcommand{\maketitle}{
	\begin{center}
		{\Large\bfseries{\@title}\par}
		\vskip 1em
		{\normalsize
			\lineskip .5em
			\begin{tabular}[t]{c}
				\@author
			\end{tabular}\par}
		\vskip 1.5em
	\end{center}
}
\makeatother
\renewenvironment{abstract}{
	\begin{adjustwidth}{1.3cm}{1.3cm}
		\noindent{\large\bfseries{A{\scriptsize BSTRACT.}}}
	}{
	\end{adjustwidth}
}

\usepackage{color}
\usepackage{xcolor}
\usepackage[normalem]{ulem} 
\usepackage{soul}

\usepackage{graphicx} 
\usepackage{xstring}  

\usepackage{xstring}
\usepackage{graphicx} 

\usepackage{marvosym}

\begin{document}
	
	\title{Overcolored Partition $k$-tuples Restricted by Parity of the Parts}

	\author{M. P. Thejitha and S. N. Fathima}
	
	\maketitle
	
	\begin{abstract}
		In this paper, we study the combinatorial object $\bar{b}^k_{r,s}(n)$ which counts the overcolored partition $k$-tuples wherein both even and odd parts are colored with $r$ and $s$ colors, respectively. We extend results of Chacon and Sellers for several families of $r,s$ and $k$. We also establish divisibility properties for $\bar{b}^k_{r,s}(n)$ modulo prime $p$ and conclude with new congruences modulo powers of $2$. The techniques involved to obtain our results rely on  theta function identities and modular forms.
		\\
		
		\noindent {\bf \small Keywords:} Partitions, Colored partitions, Overpartitions, Congruences, Modular forms\\
		
		\noindent {\bf \small Mathematics Subject Classification (2020):} 05A17, 11P83.
	\end{abstract}
	
	\bigskip
	
	\vspace{0.5em}
	
	\section{Introduction}
	 \indent An overpartition of $n$, introduced by Corteel and Lovejoy \cite{corteel}, is a partition of $n$ where the first occurrence of a part may be overlined. For example, the overpartitions of $3$ are
	$ 3,\;\bar{3},\; 2+1,\; \bar{2}+1,2+\bar{1},\; \bar{2}+\bar{1},\; 1+1+1,\; \bar{1}+1+1$. Let $\bar{p}(n)$ denote the number of overpartitions of $n$. Thus, $\bar{p}(3)=8$. The generating function for $\bar{p}(n)$ is given by 
	 \begin{align*}
	 	\sum_{n=0}^{\infty}\bar{p}(n)q^n=\dfrac{f_2}{f_1^2},
	 \end{align*}
	 where here and for the sequel, for any complex numbers $a$ and $q$ with $|q|<1$, the standard $q$-product is defined by  $(a;q)_\infty:=\displaystyle \prod_{k=0}^{\infty}(1-aq^k)$. Further, for notational convenience, we set $f_k^m:=(q^k;q^k)^m_\infty$, for $k,m\ge1$.\\
	 \indent The extensive study of $\bar{p}(n)$ for the last two decades has led to the investigation of broader classes of overpartitions. One such overpartition is studied by Kim \cite{kim} is the overcubic partition of a positive integer $n$, denoted by $\bar{b}_{2,1}(n)$ which counts the partitions where even parts may occur in two colors and the first occurrence of parts may be overlined. 
	  The generating function for $\bar{b}_{2,1}(n)$  is given by
	 \begin{align*}
	 	\sum_{n=0}^{\infty}\bar{b}_{2,1}(n)q^n=\dfrac{f_4}{f_1^2f_2}.
	 \end{align*}
	 Thus, $\bar{b}_{2,1}(2)=6$ with the 6 overcubic partitions of $2$ given by $\bar{2}_1,\;2_1,\;\bar{2}_2, \;2_2, \bar{1}+1,\;1+1 $.
	 Prior to Kim's work, Chan \cite{chan} introduced the notion of cubic partition of a positive integer $n$ where even parts may appear in two colors.\\
	\indent Recently, Amdeberhan et. al \cite{amd} introduced generalized cubic partitions where each even part may appear in $r\ge1$ colors and the first occurrence of parts may be overlined. Let $\bar{b}_{r,1}(n)$ denote the number of such partitions. The generating function for $\bar{b}_{r,1}(n)$ is given by 
	 \begin{align*}
	 	\sum_{n=0}^{\infty}\bar{b}_{r,1}(n)q^n=\dfrac{f_4^{r-1}}{f_1^2f_2^{2r-3}}.
	 \end{align*}
	 As a natural generalization of $\bar{b}_{r,1}(n)$, the authors \cite{thej} introduced a partition wherein both even and odd parts may appear in one of $r$-colors and $s$-colors, respectively, for fixed $r,s\ge1$ and the first occurrence may be overlined. For example, $\bar{b}_{1,2}(2)=10$, since the partitions in question are: $2, \bar{2}, 1_1+1_1, \bar{1}_1+1_1, \bar{1}_1+1_2, 1_1+\bar{1}_2, \bar{1}_1+\bar{1}_2, 1_2+1_2, \bar{1}_2+1_2$. Let $\bar{b}_{r,s}(n)$ denote the number of such partitions. The generating function for $\bar{b}_{r,s}(n)$ is given by 
	 \begin{align*}
	 	\sum_{n=0}^{\infty}\bar{b}_{r,s}(n)q^n=\dfrac{f_2^{3s-2r}}{f_1^{2s}f_4^{s-r}}.
	 \end{align*}
\indent A partition $k$-tuple $(\pi_1, \pi_2,\dots, \pi_k)$ of weight $n$ is a $k$-tuple of partitions $\pi_1, \pi_2,\dots, \pi_k$ such that their sum equals $n$. The arithmetic properties of overcubic partition $k$-tuples was studied by many researchers for several values of $k$ (for more details we refer the reader to see \cite{nsaikia,chen,kim2,mnaika,ssnay,cray,mpsai}). We denote overcubic partition $k$-tuples by $\bar{b}^k_{2,1}(n)$ and it is easy to note the generating function for $\bar{b}^k_{2,1}(n)$ is
\begin{align}
\sum_{n=0}^{\infty}\bar{b}^k_{2,1}(n)q^n=\dfrac{f_4^k}{f_1^{2k}f_2^k}.
\end{align}
 Very recently, Chacon and Sellers \cite{chacon} proved new infinite families of congruences for overcubic partition $k$-tuples. For example, they proved the following theorem which characterizes the number of overcubic partition $k$-tuples modulo 4.
 \begin{theorem}[{{\cite[Theorem~3.3]{chacon}}}]
 	We have 
 	\begin{enumerate}\label{6tsell}
 		\item For all $n\ge1$ and $k$ even, $b_{2,1}^k(n)\equiv 0\pmod{4}$.
 		\item For all $n\ge1$ and $k$ odd,
 			\[
 		\bar{b}^k_{2,1}(n)\equiv
 		\begin{cases}
 			2\pmod 4, & \text{if } n=\ell^2 \text{ or } n=2\ell^2\\[4pt]
 			0 \pmod 4, & \text{otherwise }.
 		\end{cases}
 		\]
 	\end{enumerate}
 \end{theorem}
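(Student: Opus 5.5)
Write the generating function as a $k$-th power of the single overcubic generating function and work modulo $4$. We have
\[
\frac{f_4}{f_1^2 f_2}=\frac{f_2}{f_1^2}\cdot\frac{f_4}{f_2^2},
\]
and $f_2/f_1^2=1/\varphi(-q)$, $f_4/f_2^2=1/\varphi(-q^2)$, where $\varphi(q)=\sum_{n\in\mathbb{Z}}q^{n^2}=1+2\sum_{n\ge1}q^{n^2}$. Hence
\[
\sum_{n\ge0}\bar b_{2,1}(n)q^n=\frac{1}{\varphi(-q)\varphi(-q^2)} .
\]
Modulo $4$ this behaves like $\varphi(q)\varphi(q^2)$, because $\varphi(-q)\equiv\varphi(q)\pmod 4$ (the signs only affect the terms $2q^{n^2}$, and $-2\equiv 2\pmod 4$). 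Also $\varphi(q)^2\equiv 1\pmod 4$, since $\varphi(q)=1+2A$ with $A$ having integer coefficients, so $\varphi(q)^2=1+4A+4A^2$. Therefore $1/\varphi(-q)\equiv\varphi(q)\pmod 4$, and likewise $1/\varphi(-q^2)\equiv\varphi(q^2)\pmod 4$. This gives
\[
\sum_{n\ge0}\bar b^k_{2,1}(n)q^n=\frac{1}{\varphi(-q)^k\varphi(-q^2)^k}\equiv \varphi(q)^k\varphi(q^2)^k\pmod 4 .
\]

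\textbf{Case $k$ even.} Here $\varphi(q)^k\equiv1$ and $\varphi(q^2)^k\equiv1\pmod4$, so the whole product is $\equiv 1\pmod 4$. Hence $\bar b^k_{2,1}(n)\equiv 0\pmod 4$ for all $n\ge1$, which is part (1).

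\textbf{Case $k$ odd.} The same argument reduces the product to $\varphi(q)\varphi(q^2)\pmod 4$. Expanding,
\[
\varphi(q)\varphi(q^2)=(1+2A(q))(1+2A(q^2))\equiv 1+2\sum_{\ell\ge1}q^{\ell^2}+2\sum_{\ell\ge1}q^{2\ell^2}\pmod 4,
\]
where the cross term $4A(q)A(q^2)$ vanishes modulo $4$. The exponents $\ell^2$ and $2\ell^2$ never coincide, because $\sqrt2$ is irrational. So the coefficient of $q^n$ for $n\ge1$ is $2$ exactly when $n$ is a square or twice a square, and $0$ otherwise. This is part (2).

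The main step is the congruence $1/\varphi(-q)\equiv\varphi(q)\pmod 4$. It is immediate once the overcubic product is rewritten in terms of $\varphi$, so I do not expect a real obstacle. The only point needing care is confirming the $\varphi$ identities: $\varphi(-q)=f_1^2/f_2$ (Gauss), and hence $\varphi(-q^2)=f_2^2/f_4$.
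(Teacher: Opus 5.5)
Your proof is correct, but it takes a different route from the paper. The paper does not reprove this cited result directly. It recovers it as the case $r=2$, $s=1$ of Theorem~\ref{6tmod4}, using $2(r+s)=6\equiv 2 \pmod 4$ at the twice-squares. That proof starts from the exact functional equation of Lemma~\ref{6l2}, which here reads $\bar{B}^k_{2,1}(q)=\phi(q)^k\prod_{i\ge1}\phi(q^{2^i})^{3k\cdot 2^{i-1}}$. It then discards every factor with $i\ge 2$, since each is an even power of a series of the form $1+2(\cdots)$ and hence $\equiv 1 \pmod 4$. This leaves $\phi(q)^k\phi(q^2)^{3k}$.

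You never use the infinite product. The single congruence $\phi(q)\phi(-q)\equiv 1 \pmod 4$ lets you invert $\phi(-q)$ and $\phi(-q^2)$ directly. This congruence is the mod-$4$ shadow of the identity $\phi(q)\phi(-q)=\phi(-q^2)^2$ that generates the paper's product. You arrive at $\phi(q)^k\phi(q^2)^{k}$, which agrees with the paper's expression modulo $4$.

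\textbf{What your route buys.} It is shorter and self-contained, needing only Gauss's identity $\phi(-q)=f_1^2/f_2$. It also gives all of Theorem~\ref{6tmod4} at once: $\bar{B}^k_{r,s}(q)=\phi(-q)^{-sk}\phi(-q^2)^{(s-r)k}$, and modulo $4$ this is $\phi(q)^{sk}\phi(q^2)^{(r+s)k}$, because $|s-r|k$ and $(r+s)k$ have the same parity.

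\textbf{What the paper's route buys.} The exact product extends to higher moduli. Modulo $8$ your key congruence fails: $\phi(q)\phi(-q)=\phi(-q^2)^2=1-4q^2+\cdots\not\equiv 1$. To go further you would have to iterate $1/\phi(-q)=\phi(q)/\phi(-q^2)^2$, which is exactly the paper's product. Keeping one more factor, $\phi(q^4)^{2(r+s)k}$, is how the paper proves Theorem~\ref{6tmod8}.
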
 
\indent We now consider overcolored partition $k$-tuples restricted by parity of parts, which is a $k$-tuple analogue of $\bar{b}_{r,s}(n)$. Let $\bar{b}^k_{r,s}(n)$ denote the number of such partitions. The generating function for $\bar{b}^k_{r,s}(n)$ is given by
\begin{align}\label{6gf}
\bar{B}^k_{r,s}(q):=\sum_{n=0}^{\infty}\bar{b}^k_{r,s}(n)q^n=\dfrac{f_2^{(3s-2r)k}}{f_1^{2sk}f_4^{(s-r)k}}.
\end{align}
\indent	Our interest here is primarily to prove divisibility properties of $\bar{b}^k_{r,s}(n)$ modulo prime and powers of 2. In the process, we prove generalizations of results proved by Chacon and Sellers \cite{chacon}.\\
  First, we recall Ramanujan's theta functions $\phi(q)$ and $\psi(q)$ which are defined as
  \begin{align*}
  	\phi(q)&:=1+2\sum_{n=1}^{\infty}q^{n^2}\\
  	\psi(q)&:=\sum_{n=0}^{\infty}q^{(n^2+n)/2}.
  \end{align*}
  \noindent In \cite{thej}, the authors also proved the following functional equation for $\bar{B}^1_{r,s}(q)$ which is a generalization of \cite[Corollary~2.3]{seller1} and \cite[Lemma~4.1]{das}.
\begin{theorem}[{{\cite[Lemma~3.1]{thej}}}]\label{6t1}
	For all $r,s\ge1$, we have
	\begin{align*}
		\bar{B}^1_{r,s}(q)=\phi(q)^s\prod_{i=1}^{\infty}\phi(q^{2^i})^{(r+s)\cdot2^{i-1}},
	\end{align*}
	where $\phi(q)=\frac{f_2^5}{f_1^2f_4^2}$.
\end{theorem}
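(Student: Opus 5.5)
The plan is to prove the identity at the level of $q$-products, using the product form $\phi(q)=f_2^5/(f_1^2f_4^2)$ and a telescoping infinite product. By \eqref{6gf} with $k=1$, the left side is $f_2^{3s-2r}/(f_1^{2s}f_4^{s-r})$. The first step is to write $\bar{B}^1_{r,s}(q)$ as $\phi(q)^s$ times a remainder, and to identify that remainder as a function of $q^2$. Concretely,
\begin{align*}
\frac{\bar{B}^1_{r,s}(q)}{\phi(q)^s}=\frac{f_2^{3s-2r}}{f_1^{2s}f_4^{s-r}}\cdot\frac{f_1^{2s}f_4^{2s}}{f_2^{5s}}=\frac{f_4^{r+s}}{f_2^{2(r+s)}}=\left(\frac{f_4}{f_2^2}\right)^{r+s}.
\end{align*}
So it suffices to show that $\left(\frac{f_4}{f_2^2}\right)^{r+s}=\prod_{i\ge1}\phi(q^{2^i})^{(r+s)2^{i-1}}$. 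Setting $m=r+s$ and replacing $q^2$ by $q$, this is the statement
\begin{align*}
\left(\frac{f_2}{f_1^2}\right)^m=\prod_{i\ge1}\phi(q^{2^{i-1}})^{m2^{i-1}}.
\end{align*}

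The second step iterates the same observation. We have $f_2/f_1^2=\phi(q)\cdot f_4^2/f_2^4=\phi(q)\,(f_4/f_2^2)^2$, and $f_4/f_2^2$ is the same function evaluated at $q^2$. Writing $G(q)=f_2/f_1^2$, this gives the functional equation $G(q)=\phi(q)G(q^2)^2$. Iterating $N$ times yields
\begin{align*}
G(q)=\prod_{j=0}^{N-1}\phi(q^{2^j})^{2^j}\cdot G(q^{2^N})^{2^N}.
\end{align*}
Raising to the $m$-th power and reindexing $j=i-1$ gives the desired finite product times a tail $G(q^{2^N})^{m2^N}$.

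The only genuine point, and the step I expect to be the main obstacle, is the limit $N\to\infty$. Because the exponent $2^N$ grows, we cannot simply say that the tail tends to $1$; we must check it in the sense of formal power series. Since $G(q)=\sum\bar p(n)q^n=1+2q+\cdots$, the tail $G(q^{2^N})^{m2^N}$ has the form $1+O(q^{2^N})$. Hence it agrees with $1$ in all coefficients below $q^{2^N}$, whatever the exponent. Likewise each factor $\phi(q^{2^i})=1+O(q^{2^i})$, so the infinite product converges $q$-adically as a formal power series. Comparing coefficients of $q^n$ with $2^N>n$ then proves the identity coefficientwise, which completes the proof.
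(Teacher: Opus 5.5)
Your proof is correct and essentially the standard argument. The paper gives no proof of this statement; it quotes it from the authors' earlier work as a generalization of Sellers' identity $f_2/f_1^2=\prod_{i\ge0}\phi(q^{2^i})^{2^i}$. Your argument is exactly that route: the reduction $\bar{B}^1_{r,s}(q)=\phi(q)^s\,(f_4/f_2^2)^{r+s}$, followed by iterating $G(q)=\phi(q)G(q^2)^2$ with $G=f_2/f_1^2$. That functional equation is the classical $\phi(q)\phi(-q)=\phi(-q^2)^2$ in disguise, since $\phi(-q)=f_1^2/f_2$. Your formal-power-series check that the tail $G(q^{2^N})^{m2^N}$ is $1+O(q^{2^N})$, despite the growing exponent, correctly handles the only point needing care.
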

We present the following congruences. It is easy to observe, Theorem \ref{6tmod4} is a generalization of Theorem \ref{6tsell}.
\begin{theorem}\label{6tmod4}
	We have
	\begin{enumerate}
		\item For all $n,r,s\ge1$ and $k$ even, $\bar{b}^k_{r,s}(n)\equiv 0\pmod{4}$.
		\item For all $n,r,s\ge 1$, some integer $\ell\ge1$, and $k$ odd,
		\[
		\bar{b}^k_{r,s}(n)\equiv
		\begin{cases}
			2s\pmod 4, & \text{if } n=\ell^2 \\[4pt]
			2(r+s)\pmod 4, & \text{if } n=2\ell^2\\[4pt]
			0 \pmod 4, & \text{otherwise }.
		\end{cases}
		\]
	\end{enumerate}
\end{theorem}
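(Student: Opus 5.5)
The plan is to raise the functional equation of Theorem \ref{6t1} to the $k$-th power and reduce modulo $4$. Since $\bar{B}^k_{r,s}(q)=\bar{B}^1_{r,s}(q)^k$, Theorem \ref{6t1} gives
\begin{align*}
\bar{B}^k_{r,s}(q)=\phi(q)^{sk}\prod_{i=1}^{\infty}\phi(q^{2^i})^{(r+s)k\cdot 2^{i-1}}.
\end{align*}
Write $\phi(q)=1+2A(q)$ with $A(q)=\sum_{n\ge1}q^{n^2}$. Two elementary facts drive everything. First, $\phi(q)^m\equiv 1+2mA(q)\pmod 4$, by the binomial theorem, since $\binom{m}{j}2^jA^j\equiv 0 \pmod 4$ for $j\ge2$. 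Second, as a consequence, $\phi(q)^{2m}\equiv 1\pmod 4$ for every $m$.

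For part (1), $k$ is even. Then every exponent $sk$ and $(r+s)k2^{i-1}$ is even, so each factor is $\equiv1\pmod 4$. Hence $\bar{B}^k_{r,s}(q)\equiv 1\pmod 4$, which says $\bar{b}^k_{r,s}(n)\equiv 0\pmod 4$ for $n\ge1$.

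For part (2), $k$ is odd. For $i\ge2$ the exponent $(r+s)k2^{i-1}$ is even, so those factors are $\equiv 1\pmod 4$. This leaves
\begin{align*}
\bar{B}^k_{r,s}(q)\equiv \phi(q)^{sk}\,\phi(q^2)^{(r+s)k}\equiv \bigl(1+2skA(q)\bigr)\bigl(1+2(r+s)kA(q^2)\bigr)\pmod 4.
\end{align*}
Expanding the product, the cross term carries a factor $4$ and drops out, so
\begin{align*}
\bar{B}^k_{r,s}(q)\equiv 1+2sk\sum_{\ell\ge1}q^{\ell^2}+2(r+s)k\sum_{\ell\ge1}q^{2\ell^2}\pmod 4.
\end{align*}
Since $k$ is odd, $2sk\equiv 2s$ and $2(r+s)k\equiv 2(r+s)\pmod 4$. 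The two supports $\{\ell^2\}$ and $\{2\ell^2\}$ are disjoint, because $\sqrt2$ is irrational. Comparing coefficients of $q^n$ therefore gives the three cases of the statement.

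There is no serious obstacle. The only points needing a moment's care are these:
\begin{itemize}
\item the infinite product must be handled modulo $4$ as formal power series; this is legitimate because only finitely many factors affect any given coefficient;
\item the binomial reduction $\phi^m\equiv 1+2mA$ needs to be justified.
\end{itemize}
Setting $r=2$, $s=1$ recovers Theorem \ref{6tsell}, since $2(r+s)=6\equiv2\pmod 4$.
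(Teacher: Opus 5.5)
Your proposal is correct and follows essentially the same route as the paper: apply the functional equation of Lemma \ref{6l2} (the $k$-th power of Theorem \ref{6t1}), note that every factor with an even exponent is $\equiv 1 \pmod 4$, and read off the coefficients of $\phi(q)^{sk}\phi(q^2)^{(r+s)k}$ modulo $4$. Your reduction $\phi^m\equiv 1+2mA \pmod 4$ and the observation that the exponent sets $\{\ell^2\}$ and $\{2\ell^2\}$ are disjoint make explicit the final coefficient comparison that the paper leaves implicit.
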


In the following theorem, we characterize $\bar{b}^k_{r,s}(n)$ modulo $8$ which generalizes {{\cite[Theorem 1.4]{thej}}} for $k$-tuples.
\begin{theorem}\label{6tmod8}
	We have
	\begin{enumerate}
		\item For all $n,r,s\ge 1$, some integer $\ell\ge 1$ and $k$ even,
		\[
		\bar{b}^k_{r,s}(n)\equiv
		\begin{cases}
			2rk\pmod 8, & \text{if } n=(2\ell)^2 \text{ or } 2(2\ell-1)^2,\\[4pt]
			2sk\pmod 8, & \text{if } n=(2\ell-1)^2,\\[4pt]
			6rk\pmod 8, & \text{if } n=2(2\ell)^2,\\[4pt]
			0\pmod 8,  & \text{otherwise }.\\[4pt]
		\end{cases}
		\]
		\item For all  $n,r,s\ge 1$, some integers $\ell, m\ge 1$ and $k$ odd,
		\[
		\bar{b}^k_{r,s}(n)\equiv
		\begin{cases}
			2sk\pmod 8, & \text{if } n=(2\ell-1)^2,\\[4pt]
			2k\left(s+(r+s)\left((r+s)k+1\right)\right)\pmod 8, & \text{if } n=(2\ell)^2,\\[4pt]
			2k(r+s^2k)\pmod 8, & \text{if } n=2(2\ell-1)^2,\\[4pt]
			2k(3r+s^2k+2s)\pmod 8,  & \text{if } n=2(2\ell)^2,\\[4pt]
			4s(r+s)\pmod 8,  & \text{if } n=\ell^2+2m^2,\\[4pt]
			0\pmod 8,  & \text{otherwise }.\\[4pt]
		\end{cases}
		\]
	\end{enumerate}
\end{theorem}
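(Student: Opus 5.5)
The plan is to raise the functional equation of Theorem \ref{6t1} to the $k$-th power and reduce everything modulo $8$ using the expansion $\phi(q)=1+2A(q)$, where $A(q):=\sum_{n\ge1}q^{n^2}$. By \eqref{6gf} and Theorem \ref{6t1},
\[
\bar{B}^k_{r,s}(q)=\phi(q)^{sk}\prod_{i=1}^{\infty}\phi(q^{2^i})^{(r+s)k\cdot 2^{i-1}} .
\]

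\textbf{Step 1: powers of $\phi$ modulo $8$.} I will first record three congruences. Since $8A^3\equiv0$, we have
\[
\phi(q)^m\equiv 1+2mA(q)+4\binom{m}{2}A(q)^2 \pmod 8 .
\]
Next, $A(q)^2=\sum_{a,b\ge1}q^{a^2+b^2}$. Modulo $2$ the off-diagonal pairs $(a,b),(b,a)$ cancel, so $A(q)^2\equiv A(q^2)\pmod 2$, and hence $4A(q)^2\equiv 4A(q^2)\pmod 8$. In particular, $\phi^2\equiv 1+4A(q)+4A(q^2)\pmod 8$, and squaring this gives $\phi^4\equiv1\pmod 8$. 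Therefore every factor with $i\ge3$ in the product, whose exponent is divisible by $4$, is $\equiv1$. We are left with
\[
\bar{B}^k_{r,s}(q)\equiv \phi(q)^{sk}\,\phi(q^2)^{(r+s)k}\,\phi(q^4)^{2(r+s)k}\pmod 8 .
\]

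\textbf{Step 2: expand the three factors.} Using Step 1, the three factors are
\[
1+2skA(q)+4\binom{sk}{2}A(q^2),\qquad 1+2(r+s)kA(q^2)+4\binom{(r+s)k}{2}A(q^4),\qquad 1+4(r+s)k\bigl(A(q^4)+A(q^8)\bigr),
\]
where the last one comes from $\phi^{2m}\equiv(1+4A+4A(q^2))^m\equiv 1+4m(A+A(q^2))$. Multiplying out modulo $8$, the only surviving cross term is $4sk(r+s)k\,A(q)A(q^2)$, which is $4s(r+s)k^2A(q)A(q^2)$. All other products carry a factor $8$.

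\textbf{Step 3: read off coefficients of $q^n$.} I will sort $n$ into the classes $(2\ell-1)^2$, $(2\ell)^2$, $2(2\ell-1)^2$, $2(2\ell)^2$, and $\ell^2+2m^2$.
\begin{itemize}
\item $A(q)$ contributes to all squares.
\item $A(q^2)$ contributes to $2\ell^2$.
\item $A(q^4)$ contributes only to $(2\ell)^2$.
\item $A(q^8)$ contributes only to $2(2\ell)^2$.
\end{itemize}
For instance, the coefficient at $n=(2\ell)^2$ is $2sk+4\binom{(r+s)k}{2}+4(r+s)k$, which one then simplifies to the stated form. For $k$ even, the binomial terms and $k^2$ terms reduce, since $4\binom{sk}{2}\equiv 2sk(sk-1)$, $k$ even kills the cross term, and so on. This gives part (1). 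For $k$ odd, we have $k^2\equiv1\pmod 2$, which gives the coefficient $4s(r+s)$ on $A(q)A(q^2)$ and the formulas of part (2). The latter are matched by routine algebra, for example $2k(r+s)((r+s)k+1)\equiv 4\binom{(r+s)k}{2}+4(r+s)k$ modulo $8$.

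\textbf{Main obstacle.} The hard part is the cross term $A(q)A(q^2)=\sum q^{\ell^2+2m^2}$. Its coefficient modulo $2$ is the parity of the number of representations $n=\ell^2+2m^2$ with $\ell,m\ge1$, and such $n$ may also coincide with one of the square classes. I would first check whether a parity argument kills it. For example, if $(\ell,m)$ gives $n$, pairing via the automorphisms of the form $x^2+2y^2$ (or an argument via $A(q)A(q^2)\equiv\cdots$ using $\phi(q)\phi(q^2)$ and the known formula for $r_{1,2}(n)$) might show the count is odd exactly in a controllable set. Otherwise I would interpret the case ``$n=\ell^2+2m^2$'' in the statement as counting representations modulo $2$, and make the overlap with the square cases explicit in the final bookkeeping.
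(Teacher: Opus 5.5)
Your argument is the paper's own. You reduce Lemma~\ref{6l2} modulo $8$ to $\phi(q)^{sk}\phi(q^2)^{(r+s)k}\phi(q^4)^{2(r+s)k}$, expand each factor binomially using $4A(q)^2\equiv 4A(q^2)$, multiply, and read off coefficients. Your bookkeeping is actually cleaner: the paper carries terms such as $4sk^2(r+s)(sk-1)\sum q^{4n^2}$ that vanish modulo $8$, whereas you note directly that only $4s(r+s)k^2A(q)A(q^2)$ survives. Part (1) goes through exactly as you say, since for even $k$ the cross term dies and the four classes are disjoint.

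On your ``main obstacle'', the first option (a parity argument) cannot succeed. The case list in part (2) is false as literally stated whenever $n=\ell^2+2m^2$ has an even number of representations, or also lies in one of the square classes. For $k=1$, $r=2$, $s=1$, your congruence reads $\bar{B}^1_{2,1}(q)\equiv 1+2A(q)+6A(q^2)+4A(q^8)+4A(q)A(q^2)\pmod 8$.
\begin{itemize}
\item At $n=9$ this gives $\bar{b}_{2,1}(9)=470\equiv 6\pmod 8$: the $2$ from $9=3^2$ and the $4$ from $9=1^2+2\cdot 2^2$ add. The statement offers $2$ or $4$.
\item At $n=27$ it gives $\bar{b}_{2,1}(27)\equiv 0\pmod 8$, because $27=5^2+2\cdot 1^2=3^2+2\cdot 3^2$ has two representations. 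The statement predicts $4$.
\end{itemize}

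The paper's proof ends with exactly your generating-function congruence, including the term $4s(r+s)\sum_{m,n\ge1}q^{n^2+2m^2}$, and then passes to the case list without comment. So your fallback is the correct conclusion: part (2) holds additively, with $4s(r+s)$ weighted by the number $R(n)$ of representations $n=\ell^2+2m^2$ with $\ell,m\ge1$. If you want $R(n)\bmod 2$ explicitly, the classical count $\#\{(x,y)\in\mathbb{Z}^2:x^2+2y^2=n\}=2\sum_{d\mid n}\left(\frac{-8}{d}\right)$ determines it. This refines the stated list rather than rescuing it.
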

In addition to Ramanujan-type congruences, the distribution of the coefficients of a formal power series modulo $M$ of partition functions has been studied extensively. Let $A(q):=\sum_{n=0}^{\infty}a(n)q^n$ be an integral power series and let $0\le r\le M$. The arithmetic density $\delta_r(A,M;X)$ is defined by
\begin{align*}
	\delta_r(A,M;X)=\frac{\#\{n\le X: a(n)\equiv r\pmod{M}\}}{X}.
\end{align*}
An integral power series $A$ is called lacunary modulo $M$ if 
\begin{align*}
	\lim_{X\to\infty}\delta_0(A,M;X)=1,
\end{align*}
equivalently, if almost all the coefficients of $A$ are divisible by $M$. In \cite{das}, Das et. al have proved divisibility results for generalized overcubic partitions. In the theorem below we extend the divisbility result for $\bar{b}^k_{r,s}(n)$.
\begin{theorem}\label{6tlac1}
	Let $n\ge 0$ and let $j,r,s,k,\alpha,m\ge 1$ be integers with $gcd(2,m)=1$. If $r=2^\alpha m$ or $k=2^\alpha m$, then the set
	\begin{align*}
		\{n\in\mathbb{N}:\bar{b}^k_{r,s}(n)\equiv 0\pmod{2^j}\} 
	\end{align*}
	has arithmetic density $1$, namely, 
	\begin{align*}
		\lim_{X\to\infty}\frac{\#\{0\le n\le X:\bar{b}^k_{r,s}(n)\equiv 0\pmod{2^j}\}}{X}=1.
	\end{align*}
\end{theorem}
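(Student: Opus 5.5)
The plan is the standard Serre-type argument, as in Das et al.\ \cite{das}: first manufacture an eta-quotient congruent to $\bar{B}^k_{r,s}(q)$ modulo $2^j$ that is a holomorphic modular form, then invoke Serre's theorem that the coefficients of an integral-weight holomorphic modular form with integer coefficients are almost all divisible by any fixed integer $M$. Write $N:=rk$ or $N:=sk$ as appropriate. Under either hypothesis, $rk=2^\alpha m'$ with $\alpha\ge1$ (if $k=2^\alpha m$ the factor $2^\alpha$ divides $rk$). Since $\alpha$ is fixed while $j$ is arbitrary, I will rely on the following reduction: by the binomial theorem, $f_n^{2^{t}}\equiv f_{2n}^{2^{t-1}}\pmod{2^{t}}$, and iterating gives $f_1^{2^{t}}\equiv f_2^{2^{t-1}}\pmod{2^{t}}$. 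It is therefore enough to multiply $\bar B^k_{r,s}$ by a suitable quotient $A(q)^{2^{j}}/A(q^2)^{2^{j-1}}\equiv1\pmod{2^{j+1}}$ that clears poles.

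Concretely, by \eqref{6gf}, in eta notation
\[
\bar B^k_{r,s}(q)=\frac{\eta(2z)^{(3s-2r)k}}{\eta(z)^{2sk}\eta(4z)^{(s-r)k}}\quad(\text{up to }q\text{-power}).
\]
I will multiply by
\[
\frac{\eta(4z)^{2^{a}}}{\eta(8z)^{2^{a-1}}}\equiv1\pmod{2^{a}}
\]
and by similar factors at levels $1,2$, with $a$ large, so that the product $G(z)$ is an eta-quotient on $\Gamma_0(2^b)$ for some $b$ whose orders at all cusps $c/d$, $d\mid 2^b$, are nonnegative. Gordon--Hughes--Newman's criterion gives modularity once $\sum\delta r_\delta\equiv0\pmod{24}$ and $\sum (N/\delta)r_\delta\equiv0\pmod{24}$, adjusting $q$-power shifts, with a nebentypus character. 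Ligozat's formula computes the cusp orders, and choosing $a$ large makes the positive terms dominate.

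The evenness of $rk$ matters as follows. The exponents $2sk$ and $(3s-2r)k$ are fine, but the weight $\frac12\sum r_\delta$ must be an integer. The weight of $\bar B^k_{r,s}$ itself is $\tfrac12\bigl((3s-2r)k-2sk-(s-r)k\bigr)=-\tfrac{rk}{2}$. This is integral exactly because $2\mid rk$, and that is where the hypothesis $r=2^\alpha m$ or $k=2^\alpha m$ enters. The auxiliary factors have weights $2^{a-2}$, which are integers. Then $G\in M_w(\Gamma_0(2^b),\chi)$ with integer weight, and $G\equiv \bar B^k_{r,s}(q^{\cdot})\pmod{2^j}$ after rescaling $q\mapsto q^{24}$. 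Serre's theorem then gives density $1$ for coefficients divisible by $2^j$. Since the rescaling only maps $n\mapsto 24n+c$, the density statement transfers to $\bar b^k_{r,s}(n)$ itself.

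The main obstacle is the cusp-order bookkeeping. I must verify via Ligozat that for every divisor $d\mid 2^b$ the order $\frac{2^b}{24}\sum_\delta \frac{\gcd(d,\delta)^2 r_\delta}{\gcd(d,2^b/d)d\delta}$ is nonnegative, and that the $\sum\delta r_\delta$ conditions can be met by the $q$-shift. I would first try the single multiplier $\eta(8z)^{2^{a}}/\eta(16z)^{2^{a-1}}$, raised as needed. With $a$ huge, at $d=1,2,4$ the dominant terms are positive, but at $d=8,16$ they may be negative. If that fails I would instead use $f_1^{2^{a}}/f_2^{2^{a-1}}$, which is positive at the cusp $\infty$ side, balancing both ends in the same way Das et al.\ did for $k=1$.
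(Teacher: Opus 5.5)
Your route is valid but differs from the paper's. The paper builds no modular form for this theorem. It plugs $\eta(2z)^{(3s-2r)k}/(\eta(z)^{2sk}\eta(4z)^{(s-r)k})$ into the Cotron--Michaelsen--Stamm--Zhu criterion (Theorem~\ref{6tlacu2}) with $D_G=2$, checks $2\ge\sqrt{(2sk+4(s-r)k)/((3s-2r)k/2)}=2$, and stops. You instead construct the holomorphic form by hand and apply Serre (Theorem~\ref{6tdivi}), which is what the paper does for odd $p$ in Theorem~\ref{6tdiv1}.

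The step you leave open does close, using the multiplier you named first. Let $F=\eta(z)^{-2sk}\eta(2z)^{(3s-2r)k}\eta(4z)^{-(s-r)k}$. Its order at $c/d$ has the sign of $\sum_\delta\gcd(d,\delta)^2r_\delta/\delta$, which is $-\tfrac34k(r+s)$ for $d$ odd, $3k(s-r)$ for $d\equiv 2\pmod 4$, and $\sum_\delta\delta r_\delta=0$ for $4\mid d$. The factor $\eta(4z)^{2^a}/\eta(8z)^{2^{a-1}}$ contributes $3\cdot 2^{a-4}\gcd(d,4)^2>0$ when $8\nmid d$, and exactly $0$ when $8\mid d$. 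So for $a\ge j$ with $2^{a-2}\ge k(r+s)$, the product has positive integral weight, lies in $M_{2^{a-2}-rk/2}(\Gamma_0(32),\chi)$, and is $\equiv\bar B^k_{r,s}(q)\pmod{2^a}$. Both conditions of Theorem~\ref{6t2.1} hold, since $\sum\delta r_\delta=0$ and $\sum(32/\delta)r_\delta=24(2^{a-2}-k(r+s))$. Hence there is no $q$-shift and no need to rescale by $24$.

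What your sketch misses is that a factor $\eta(\delta z)^{2^a}/\eta(2\delta z)^{2^{a-1}}$ has order exactly $0$ at every cusp whose denominator has a larger $2$-part than $\delta$. So taking $a$ large works only because $F$ itself has order $0$ at all $d$ with $4\mid d$. In particular, your fallback $f_1^{2^a}/f_2^{2^{a-1}}$ has order $0$ at every even $d$. It therefore cannot cancel the pole of $F$ at the cusp $1/2$ when $r>s$; the multiplier must sit at an even $\delta$. (Minor: $A(q)^{2^j}/A(q^2)^{2^{j-1}}\equiv 1$ holds modulo $2^j$, not $2^{j+1}$. This is harmless since $a$ is free.)

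As for what each approach buys: the paper's argument is a one-line numerical check, but Theorem~\ref{6tlacu2} requires positive exponents $r_i,s_i$. As written, it is legitimate only when $s\ge r$; for $r\ge 3s/2$ it can be repaired by taking $D_G=4$. For $s<r<3s/2$ the quotient is $\eta(2z)^{(3s-2r)k}\eta(4z)^{(r-s)k}/\eta(z)^{2sk}$, so $D_G=2$. The criterion then reads $4\ge 8s/(5s-3r)$, i.e.\ $r\le s$, and it fails; for example $r=4$, $s=3$, $k=1$ gives $f_2f_4/f_1^6$. This failure is exactly the pole at $1/2$. Once you insert the computation above, your direct construction handles every $r,s$ satisfying the parity hypothesis uniformly, at the cost of doing the cusp bookkeeping explicitly.
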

\begin{theorem}\label{6tdiv1}
Let $j\ge2$, $r,s,k,\alpha,m\ge 1$ be integers with $gcd(2,m)=1$ and prime $p\ge 3$ such that $p^{j-1}(p^2-1)\ge 3k(r+s)$. If $r=2^\alpha m$ or $k=2^\alpha m$, then the set
	\begin{align*}
		\{n\in\mathbb{N}:\bar{b}^k_{r,s}(n)\equiv 0\pmod{p^j}\} 
	\end{align*}
	has arithmetic density $1$, namely, 
	\begin{align*}
		\lim_{X\to\infty}\frac{\#\{0\le n\le X:\bar{b}^k_{r,s}(n)\equiv 0\pmod{p^j}\}}{X}=1.
	\end{align*}
\end{theorem}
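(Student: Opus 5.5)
We follow the standard Serre-type argument, used for instance by Das et al.\ for generalized overcubic partitions: construct a holomorphic modular form whose Fourier coefficients are congruent to $\bar{b}^k_{r,s}(n)$ modulo $p^j$, then apply Serre's theorem that the coefficients of an integral-weight holomorphic modular form with integer coefficients are almost all divisible by any fixed integer $M$.

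First we reduce the generating function \eqref{6gf}. Since $2$ is a unit modulo $p^j$, we do not use $2$-adic simplifications. Instead we multiply by an eta-quotient that is $\equiv 1 \pmod{p^j}$. The relevant congruence is
\[
\frac{f_1^{p^j}}{f_p^{p^{j-1}}}\equiv 1\pmod{p^j},
\]
which follows from $f_1^{p}\equiv f_p \pmod p$ by repeated $p$-th powering. Write $q=e^{2\pi i z}$ and
\[
\bar{B}^k_{r,s}(q)=\frac{\eta(2z)^{(3s-2r)k}}{\eta(z)^{2sk}\eta(4z)^{(s-r)k}}\,q^{\delta},
\]
with $\delta=\bigl(2sk+4(s-r)k-2(3s-2r)k\bigr)/24=0$. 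So no $q$-shift is needed, which is convenient. Next we multiply by $A(z)=\eta(z)^{a p^j}/\eta(p z)^{a p^{j-1}}$ and by suitable powers of $\eta(2z)^{p^j}/\eta(2pz)^{p^{j-1}}$ and $\eta(4z)^{p^j}/\eta(4pz)^{p^{j-1}}$. The exponents are chosen so that every $\eta(\delta z)$ with $\delta\in\{1,2,4\}$ ends up with a nonnegative exponent. The denominators now involve only $\eta(pz)$, $\eta(2pz)$, $\eta(4pz)$, which we arrange to sit with the correct signs. This gives an eta-quotient $G(z)\equiv \bar{B}^k_{r,s}(q)\pmod{p^j}$ at level $N=4p$.

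We then verify the Gordon--Hughes--Newman conditions for $G$:
\begin{itemize}
\item the sums $\sum \delta r_\delta$ and $\sum (N/\delta) r_\delta$ are $\equiv 0 \pmod{24}$;
\item the weight $\tfrac12\sum r_\delta$ is an integer;
\item $\prod \delta^{r_\delta}$ is a square, or gives an explicit character.
\end{itemize}
This is where the hypothesis that $r$ or $k$ is even enters. The exponents $(3s-2r)k$ and $(s-r)k$ must have the parity needed for the square-class condition and the mod-$24$ conditions. If not, we replace $z$ by $24z$ or $8z$ and adjust accordingly.

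Finally we check holomorphy at all cusps $c/d$ with $d\mid 4p$ via Ligozat's formula. The order there is
\[
\frac{N}{24}\sum_{\delta}\frac{\gcd(d,\delta)^2 r_\delta}{\gcd(d,N/\delta)\, d\,\delta},
\]
and it must be nonnegative. At cusps with $p\nmid d$, the positive terms carry the large factor $p^j$ against negative terms of size about $p^{j-1}$ and the original exponents of size $k(r+s)$. At cusps with $p\mid d$, the dominant contribution is $p^{j-1}(p^2-1)$ compared with $3k(r+s)$. This is precisely the hypothesis $p^{j-1}(p^2-1)\ge 3k(r+s)$.

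Once $G\in M_w(\Gamma_0(N),\chi)$ with integer coefficients, Serre's theorem says that almost all coefficients of $G$ are divisible by $p^j$. Hence the same holds for $\bar{b}^k_{r,s}(n)$, which gives density $1$.

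The main obstacle is this cusp computation: organizing the Ligozat sums over the six divisors $d\in\{1,2,4,p,2p,4p\}$ and showing that each is nonnegative under exactly the stated inequality. The signs of $3s-2r$ and $s-r$ vary, so we would first treat the worst case, the cusp $d=1$ or $d=p$ where the $\eta(z)^{-2sk}$ term dominates, and bound the remaining terms crudely by $k(r+s)$ times small constants.
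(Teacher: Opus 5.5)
Your proof fails at holomorphy at the cusps $c/d$ with $p\mid d$. The hypothesis $p^{j-1}(p^2-1)\ge 3k(r+s)$ cannot fix it.

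Take any auxiliary factor $\eta(\delta z)^{p^j}/\eta(p\delta z)^{p^{j-1}}$ with $p\nmid\delta$. If $p\mid d$ then $\gcd(d,p\delta)=p\gcd(d,\delta)$, so its contribution to the Ligozat sum is
\[
\frac{p^{j}\gcd(d,\delta)^2}{\delta}-\frac{p^{j-1}\bigl(p\gcd(d,\delta)\bigr)^2}{p\delta}=0 .
\]
So at these cusps the order of your $G$ is a positive multiple of the order coming from the original exponents alone. At $d=p$ (your level $4p$) that quantity is
\[
-2sk+\frac{(3s-2r)k}{2}-\frac{(s-r)k}{4}=-\frac{3k(r+s)}{4}<0,
\]
for every $j$. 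At $d=2p$ you get $3k(s-r)$, which is negative when $r>s$. Hence $G$ has a pole at the cusp $1/p$. It is only weakly holomorphic, and Serre's theorem cannot be applied.

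The stated inequality is exactly what is needed at the cusps with $p\nmid d$. There each auxiliary factor contributes a positive multiple of $p^{j-2}(p^2-1)$. You have assigned the hypothesis to the wrong family of cusps.

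Rescaling $z\mapsto 24z$ does not help. The same cancellation then occurs at the cusps whose $d$ contains the full power of $p$ in the level, e.g.\ $9\mid d$ when $p=3$. Enlarging the auxiliary factor does not help either. A factor $\eta(p\delta z)^{p^j}/\eta(p^2\delta z)^{p^{j-1}}$ is positive at $d=p$ but has order $0$ when $p^2\mid d$, so the pole just moves to $1/p^2$, and so on.

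Your route is the same as the paper's. The paper multiplies by $C(z)^{p^j}$ with $C(z)=\eta^p(24z)/\eta(24pz)$, and it does not close this gap either. It computes the level as $2^8\cdot 3$, which is not divisible by $p$ even though $\eta(24pz)$ occurs. It therefore only examines cusps with $d\mid 2^8\cdot 3$. In a correct level such as $2^8\cdot 3\cdot p$ with $p\ge 5$, the same cancellation at $d=p$ gives an order that is a positive multiple of $-3k(r+s)/96$ for $B_{r,s,k,j}$.

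So following the paper will not repair your argument. Poles at cusps whose $d$ carries the maximal power of $p$ are exactly what Treneer-type arguments remove, by sieving with $U_p$. Those arguments only yield density statements for coefficients along progressions like $p^m n$, not for all $n$.
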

 We employ a result due to Ono and Taguchi \cite{taguchi} to prove Theorem \ref{6tdiv2}.
\begin{theorem}\label{6tdiv2}
	Let $n$ be a non-negative integer and let $j,r,s,k,\alpha,m\ge 1$ be integers with $gcd(2,m)=1$. If $r=2^\alpha m$ or $k=2^\alpha m$, and $2^j>rk$, then there exists an integer $t\ge0$ such that for every $u\ge 1$ and distinct primes $p_1,p_2, \dots, p_{t+u}$ greater than $3$, we have
	\begin{align*}
		\bar{b}^k_{r,s}\left(\dfrac{p_1\cdots p_{t+u}\cdot n}{24}\right)\equiv 0\pmod{2^j},
	\end{align*}
	for every $n\ge 1$ coprime to $p_1,p_2, \dots, p_{t+u}$.
\end{theorem}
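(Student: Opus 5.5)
We follow the standard Ono--Taguchi strategy: build an eta-quotient that is congruent modulo $2^j$ to a shift of $\bar{B}^k_{r,s}$, show it is a cusp form of level $2^a$, and then apply nilpotency of Hecke operators. Write $r=2^\alpha m$ (the case $k=2^\alpha m$ is handled identically, since only the product $rk$ enters the relevant exponent). By \eqref{6gf},
\begin{align*}
\bar{B}^k_{r,s}(q)=\frac{f_2^{(3s-2r)k}}{f_1^{2sk}f_4^{(s-r)k}}
=\frac{f_2^{(3s-2r)k}\,f_4^{rk}}{f_1^{2sk}f_4^{sk}} .
\end{align*}
The key 2-adic input is the congruence $f_{\ell}^{2^{j}}\equiv f_{2\ell}^{2^{j-1}}\pmod{2^{j}}$. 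From it, $f_4^{2^{N}}\equiv f_8^{2^{N-1}}\pmod{2^{N}}$, and iterating gives $f_1^{2^{N}}\equiv f_2^{2^{N-1}}\pmod{2^N}$. We choose $N$ large relative to $j$ (say $N\ge j+\alpha$) and multiply the generating function by a ratio of the form $f_{2^{b}}^{2^{N}}/f_{2^{b+1}}^{2^{N-1}}\equiv 1 \pmod{2^{j}}$. The powers of $2$ in these multipliers are chosen so that the resulting eta-quotient
\begin{align*}
G(z)=\frac{\eta(2z)^{(3s-2r)k}\,\eta(4z)^{rk}\,\eta(?z)^{\cdots}}{\eta(z)^{2sk}\eta(4z)^{sk}}\cdots
\end{align*}
has positive total weight and positive orders at all cusps. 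Then
\begin{align*}
G(24z)\equiv \sum_n \bar{b}^k_{r,s}(n)\,q^{24n+c}\pmod{2^j},
\end{align*}
with $c$ fixed by the eta-exponents. The statement's argument $n/24$ indicates the normalization in which, after the shift, the index set is $24n$.

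To verify modularity we use the Gordon--Hughes--Newman criterion. It requires checking two things: that $\sum \delta r_\delta\equiv 0$ and $\sum (N/\delta) r_\delta\equiv 0 \pmod{24}$, which is arranged by the large $2$-power multipliers, and that the square class of $\prod\delta^{r_\delta}$ gives a quadratic character. Ligozat's formula then gives the orders of vanishing at cusps $c/d$ with $d\mid 2^{a}\cdot 3$ or similar. The hypothesis $2^j>rk$ is exactly what I expect to force the cusp orders to be positive at the critical cusp, where the numerator $f_4^{rk}$ dominates. Hence $G(24z)\in S_{w}(\Gamma_0(576\cdot2^{a}),\chi)$ with integer coefficients.

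Next, Ono--Taguchi's theorem states that for an integral cusp form of level $2^a\cdot 3^b$ (or $N$ with the relevant support) and modulus $2^j$, there is $t\ge0$ such that for any $u\ge1$ and primes $p_1,\dots,p_{t+u}$ coprime to the level we have $G\mid T_{p_1}\cdots T_{p_{t+u}}\equiv 0\pmod{2^j}$. Expanding the Hecke operators multiplicatively and using that the coefficients of $G(24z)$ are supported on $n\equiv c\pmod{24}$, the coefficient of $q^{p_1\cdots p_{t+u} n}$ with $\gcd(n,p_i)=1$ is exactly $\bar{b}^k_{r,s}\bigl(\frac{p_1\cdots p_{t+u}n - c}{24}\bigr)$, up to the normalization. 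This yields the claimed vanishing modulo $2^j$.

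The main obstacle is the second step: choosing the multiplier exponents so that the eta-quotient has level a power of $2$ times $3$, integral weight, and nonnegative order at every cusp. That requires the bookkeeping of Ligozat's formula together with the hypothesis $2^j>rk$. I would first try the single multiplier $f_1^{2^{N}}/f_2^{2^{N-1}}$, and only add a second multiplier with $f_4,f_8$ if the cusp at $1/2$ fails.
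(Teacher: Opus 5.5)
Your route is the same as the paper's. The paper takes
\[
B_{r,s,k,j}(z)=\frac{\eta^{(3s-2r)k}(48z)}{\eta^{2sk}(24z)\,\eta^{(s-r)k}(96z)}\Bigl(\frac{\eta^{2}(24z)}{\eta(48z)}\Bigr)^{2^j}\equiv\sum_{n\ge0}\bar{b}^k_{r,s}(n)q^{24n}\pmod{2^{j+1}},
\]
asserts $B_{r,s,k,j}\in M_{2^{j-1}-rk/2}(\Gamma_0(2^8\cdot3),\chi)$, and applies Theorem~\ref{6ttagu}. However, you leave the step that carries the proof as a placeholder ($\eta(?z)^{\cdots}$), and some of what you do assert is wrong.

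(i) $G$ cannot be a cusp form with positive order at every cusp. The base quotient and every multiplier $f_{2^b}^{2^N}/f_{2^{b+1}}^{2^{N-1}}$ satisfy $\sum\delta r_\delta=0$. So your shift is $c=0$, and the order at $\infty$ is $0$, because the constant term is $\bar{b}^k_{r,s}(0)=1$. You only need nonnegative orders, i.e.\ $G\in M_w$. Theorem~\ref{6ttagu} is stated for $M_\ell(\Gamma_0(2^aN),\chi)\cap\mathbb{Z}[[q]]$ with $N\in\{1,3,5,15,17\}$, so cuspidality is irrelevant. Your level $576\cdot2^a$, with its factor $3^2$, is not covered by that statement; the actual level is $2^8\cdot3$ after $z\mapsto24z$. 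Note also that $c=0$ is exactly what produces the argument $p_1\cdots p_{t+u}n/24$.

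(ii) The hypothesis $2^j>rk$ does not control any cusp. In the paper the multiplier exponent is tied to $j$, and $2^j>rk$ is precisely the condition that the weight $2^{j-1}-rk/2$ is positive. Once you decouple $N$ from $j$, the weight grows with $N$ and the hypothesis is never used.

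The missing computation is short. Work unscaled. The base $\eta(2z)^{(3s-2r)k}\eta(z)^{-2sk}\eta(4z)^{-(s-r)k}$ has Ligozat sums $\sum_\delta\gcd(d,\delta)^2r_\delta/\delta$ equal to:
\begin{itemize}
\item $-\tfrac34(r+s)k$ for $d$ odd;
\item $3(s-r)k$ for $2\parallel d$;
\item $0$ for $4\mid d$.
\end{itemize}
The multiplier $\bigl(\eta(z)^2/\eta(2z)\bigr)^{2^N}$ contributes $3\cdot2^{N-1}$, $0$, $0$ in these three cases. So your first attempt fails at the cusp $1/2$ whenever $r>s$, already in the overcubic case $r=2$, $s=1$, as you suspected.

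This is also what the paper's argument overlooks at $p=2$. Its multiplier is the $24z$-rescaling of this one with $N=j$. At the cusps $c/d$ of $\Gamma_0(2^8\cdot3)$ with $2^4\parallel d$, the order of $B_{r,s,k,j}$ is a positive multiple of $(s-r)k$. For $2^4\nmid d$ it is a positive multiple of $2^{j+1}-(r+s)k$. The inequalities \eqref{6einq1} and \eqref{6einq2} were derived for $p\ge3$, where $\gcd(d,24p)=\gcd(d,24)$, and they do not transfer to $p=2$.

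Your fallback repairs this. Add $\bigl(\eta(2z)^2/\eta(4z)\bigr)^{2^N}$, which contributes $3\cdot2^{N-2}$, $3\cdot2^N$, $0$; a multiplier built from $f_4,f_8$, as you proposed, also works for $N$ large. The totals become $\tfrac34\bigl(3\cdot2^N-(r+s)k\bigr)$, $3\bigl(2^N+(s-r)k\bigr)$ and $0$, all nonnegative once $2^N\ge(r+s)k$. Since $\sum\delta r_\delta=0$ and $\sum_\delta(32/\delta)r_\delta=24\bigl(3\cdot2^N-(r+s)k\bigr)$, you get $G\in M_{2^N-rk/2}(\Gamma_0(32),\chi)$ with $G\equiv\bar{B}^k_{r,s}(q)\pmod{2^{N+1}}$.

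Now take $N\ge j$. Let $c_0$ be the constant from Theorem~\ref{6ttagu} and set $t=c_0+j-1$, so that $t+u\ge c_0+j$ for every $u\ge1$. This gives $\bar{b}^k_{r,s}(p_1\cdots p_{t+u}m)\equiv0\pmod{2^j}$ for all $m$ coprime to the $p_i$. That is the statement: put $n=24m$, and when $24\nmid n$ the left side is $0$. The rescaling by $24$ is optional.
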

The following result establishes Ramanujan-type congruence modulo any prime $p\ge3$ for $\bar{b}^k_{r,s}(n)$ for infinitely many $r$ and $s$.
\begin{theorem}\label{6tmodp1}
	For all $n\ge0$, $m\ge j\ge 0$, $k,s\ge 1$ and prime $p\ge 3$, with $1\le r\le p-1$, such that $r$ is a quadratic non-residue modulo $p$ and $sk\equiv 1\pmod{p}$,
	\begin{align*}
		\bar{b}^{k}_{p(m-j)+p-s,pm+s}(pn+r)\equiv 0\pmod{p}.
	\end{align*}
\end{theorem}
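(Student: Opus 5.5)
The plan is to reduce everything to Theorem \ref{6t1} and then use the Frobenius congruence $f(q)^p\equiv f(q^p)\pmod p$ for integral power series $f$. Throughout, set
\[
r'=p(m-j)+p-s,\qquad s'=pm+s .
\]
Since \eqref{6gf} is just the $k$-th power of $\bar{B}^1_{r',s'}(q)$, I assume $r',s'\ge 1$, as is implicit in the statement. Raising Theorem \ref{6t1} to the $k$-th power gives
\[
\bar{B}^k_{r',s'}(q)=\phi(q)^{s'k}\prod_{i=1}^{\infty}\phi(q^{2^i})^{(r'+s')k\cdot 2^{i-1}} .
\]

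The key observation is that $r'+s'=p(2m-j+1)$ is divisible by $p$. Because $m\ge j$, the factor $2m-j+1$ is a positive integer. Hence every factor of the infinite product is a $p$-th power of an integral power series. Modulo $p$, each such factor therefore becomes a power series in $q^p$:
\[
\phi(q^{2^i})^{p(2m-j+1)k2^{i-1}}\equiv \phi(q^{p2^i})^{(2m-j+1)k2^{i-1}}\pmod p .
\]

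Next I treat the remaining factor $\phi(q)^{s'k}=\phi(q)^{pmk}\,\phi(q)^{sk}$. The first piece is congruent to $\phi(q^p)^{mk}$. For the second, the hypothesis $sk\equiv 1\pmod p$ lets me write $sk=1+pt$ with $t\ge 0$, so that
\[
\phi(q)^{sk}\equiv \phi(q)\,\phi(q^p)^{t}\pmod p .
\]
Altogether,
\[
\bar{B}^k_{r',s'}(q)\equiv \phi(q)\,F(q^p)\pmod p
\]
for some integral power series $F$.

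Extracting the coefficient of $q^{pn+r}$ then only involves terms $q^{N}$ of $\phi(q)$ with $N\equiv r\pmod p$. The exponents appearing in $\phi(q)=1+2\sum_{\ell\ge1}q^{\ell^2}$ are $0$ and the squares $\ell^2$. Since $1\le r\le p-1$, we have $r\not\equiv 0\pmod p$. Since $r$ is a quadratic non-residue, no square is congruent to $r$ modulo $p$. Hence the coefficient vanishes modulo $p$, which gives the claimed congruence.

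The only delicate points are bookkeeping rather than real obstacles. First, one must check that the Frobenius step is legitimate for each factor with a nonnegative integer exponent, which uses $m\ge j$ and $t\ge 0$. Second, one must confirm that \eqref{6gf} and Theorem \ref{6t1} apply to the given $r',s'$, which is where the positivity assumption on $r'$ enters. I expect no genuine difficulty beyond verifying these.
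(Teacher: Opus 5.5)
Your proof is correct and follows essentially the same route as the paper. In both arguments, Lemma \ref{6l2} together with the Frobenius congruence (Lemma \ref{bt}) reduces the generating function modulo $p$ to $\phi(q)$ times a power series in $q^p$, and the non-residue condition on $r$ then kills the coefficients of $q^{pn+r}$. The only difference is organizational: the paper first proves $\bar{b}^k_{p-s,s}(pn+r)\equiv 0\pmod p$ and then multiplies by the $p$-th power $f_2^{p(m+2j)k}/(f_1^{2pmk}f_4^{pjk})$, whereas you apply the $\phi$-product directly to $(r',s')$ using $p\mid r'+s'$, which is slightly more streamlined.
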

\begin{remark}
	The congruence $\bar{b}^{k}_{2,1}(3n+2)\equiv 0\pmod{3}$ proved in \cite[Theorem 4.7, 4.10 and 4.11]{chacon} for $k=27l+10,\; 9l+7$ and $3l+1$ can be directly obtained from the above result.
\end{remark}
We observe the following theorem is a generalization of a result of Chacon and Sellers {{\cite[Theorem 5.1]{chacon}}} modulo any prime $p$.
\begin{theorem}\label{6tmodp2}
	For prime $p$, integers  $n\ge0$, $r,s,\ell, k\ge1$ and $j$ such that $1\le j<p$, we have 
	\begin{align*}
		\bar{b}^{p^k\ell}_{r,s}(p^kn+p^{k-1}j)\equiv 0\pmod{p}.
	\end{align*}
\end{theorem}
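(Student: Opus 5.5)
We raise the single-tuple generating function to a $p^k$-th power and use the Frobenius congruence modulo $p$. Write $F(q):=\bar{B}^{1}_{r,s}(q)=\dfrac{f_2^{3s-2r}}{f_1^{2s}f_4^{s-r}}$, an integral power series with constant term $1$. By \eqref{6gf},
\begin{align*}
\bar{B}^{p^k\ell}_{r,s}(q)=F(q)^{p^k\ell}=\left(F(q)^{\ell}\right)^{p^k}.
\end{align*}

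The key step is the congruence $G(q)^{p^k}\equiv G(q^{p})^{p^{k-1}}\pmod{p}$, valid for any integral power series $G$. It follows from $G(q)^p\equiv G(q^p)\pmod p$, which holds because the multinomial expansion of a $p$-th power of a sum has all cross coefficients divisible by $p$ and $a^p\equiv a$ for integers $a$; one then raises both sides to the $p^{k-1}$ power. Iterating the same fact is also harmless: $G(q^p)^{p^{k-1}}\equiv G(q^{p^k})\pmod p$.

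Taking $G=F^\ell$, which is integral since $F$ is a quotient of products $f_i$ with constant term $1$, we get
\begin{align*}
\sum_{n\ge0}\bar{b}^{p^k\ell}_{r,s}(n)q^n\equiv F(q^{p^k})^{\ell}\pmod p.
\end{align*}
So, modulo $p$, every coefficient whose index is not a multiple of $p^k$ vanishes. The index in the statement is $p^kn+p^{k-1}j$ with $1\le j<p$. This is congruent to $p^{k-1}j\not\equiv 0 \pmod{p^k}$, because $p\nmid j$. Hence the coefficient is $\equiv 0\pmod p$. Using only the weaker power $q^p$ already suffices when $k=1$. For $k\ge2$ the full $q^{p^k}$ statement is needed, since $p^{k-1}j$ is itself divisible by $p$.

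There is no real obstacle here. The one point to state carefully is that $F^\ell$ has integer coefficients, so that the Frobenius congruence applies. For this one can also invoke Theorem \ref{6t1}: $F$ is a product of powers of $\phi(q^{2^i})$, each an integral series with constant term $1$, and the infinite product converges $q$-adically.
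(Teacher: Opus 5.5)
Your proof is correct and is essentially the paper's argument. The paper applies Lemma \ref{bt} (the Frobenius congruence) and extracts the terms $q^{pn}$ repeatedly before finally extracting $q^{pn+j}$; you instead apply Frobenius $k$ times at once to get $\bar{B}^{p^k\ell}_{r,s}(q)\equiv \bar{B}^{\ell}_{r,s}(q^{p^k})\pmod p$. This one-step version is slightly cleaner and in fact gives $\bar{b}^{p^k\ell}_{r,s}(N)\equiv 0\pmod p$ for every $N$ with $p^k\nmid N$, not only for $N\equiv p^{k-1}j\pmod{p^k}$.
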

\begin{theorem}\label{6tmodp3}
	For prime $p$, integers $n\ge0$, $r,s,j, k\ge1$, $t,u,\ell\ge 0$ and $j$ such that $1\le j<p$, we have 
	\begin{align*}
		\bar{b}^{p^2k+p\ell}_{p^2r+pt,p^2s+pu}(p^2n+pj)\equiv 0\pmod{p}.
	\end{align*}
\end{theorem}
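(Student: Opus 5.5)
The plan is to show that every exponent in the generating function \eqref{6gf} is divisible by $p^2$, and then use the Frobenius congruence to see that, modulo $p$, the series is supported only on multiples of $p^2$. The argument works for every prime $p$, including $p=2$, and never uses the quadratic-residue machinery of Theorem \ref{6tmodp1}.

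Set $K=p^2k+p\ell=p(pk+\ell)$, $R=p^2r+pt=p(pr+t)$ and $S=p^2s+pu=p(ps+u)$. Substituting into \eqref{6gf} gives
\begin{align*}
\bar{B}^{K}_{R,S}(q)=\frac{f_2^{(3S-2R)K}}{f_1^{2SK}f_4^{(S-R)K}}.
\end{align*}
Each of $(3S-2R)K$, $2SK$ and $(S-R)K$ is a product of one factor divisible by $p$ (a combination of $R$ and $S$) and the factor $K$, which is also divisible by $p$. So every exponent is an integer multiple of $p^2$. Hence
\begin{align*}
\bar{B}^{K}_{R,S}(q)=G(q)^{p^2}, \qquad G(q):=\frac{f_2^{a}}{f_1^{b}f_4^{c}},
\end{align*}
where
\begin{align*}
a=(3(ps+u)-2(pr+t))(pk+\ell),\quad b=2(ps+u)(pk+\ell),\quad c=(ps+u-pr-t)(pk+\ell).
\end{align*}
These are integers that may be negative or zero; that does not matter.

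Next I will apply the binomial/Frobenius congruence $(1-x)^{p^2}\equiv 1-x^{p^2}\pmod p$. It gives $f_i^{p^2}\equiv f_{ip^2}\pmod p$ for each $i$, as power series with integer coefficients. Since $f_i$ has constant term $1$, it is invertible in $\mathbb{Z}[[q]]$, so the congruence also holds for $f_i^{-p^2}$. Raising to the integer powers $a$, $-b$ and $-c$, I obtain
\begin{align*}
\bar{B}^{K}_{R,S}(q)\equiv \frac{f_{2p^2}^{a}}{f_{p^2}^{b}f_{4p^2}^{c}}\pmod p.
\end{align*}
The right-hand side is a power series in $q^{p^2}$. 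Therefore $\bar{b}^{K}_{R,S}(N)\equiv 0\pmod p$ whenever $p^2\nmid N$.

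Finally, I will take $N=p^2n+pj$ with $1\le j<p$. Then $N\equiv pj\pmod{p^2}$ and $p\nmid j$, so $p^2\nmid N$, and the congruence follows. There is no real obstacle here. The only points to check are that every exponent, including $(S-R)K$ when $S<R$, is a multiple of $p^2$, and that the Frobenius step legitimately extends to negative exponents through inversion in $\mathbb{Z}[[q]]$.
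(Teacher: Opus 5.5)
Your proof is correct and is essentially the paper's argument. The paper applies Lemma \ref{bt} once to absorb the factor $p$ in $p^2k+p\ell$, extracts the terms $q^{pn}$, applies Lemma \ref{bt} again to the factor $p$ in the color counts, and then extracts $q^{pn+j}$; you absorb both factors of $p$ in one step via $f_i^{p^2}\equiv f_{ip^2}\pmod p$, which is slightly cleaner and shows directly that $\bar{b}^{p^2k+p\ell}_{p^2r+pt,p^2s+pu}(N)\equiv 0\pmod p$ whenever $p^2\nmid N$.
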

We also prove the following congruences modulo $3$ for infinitely many $r$, $s$ and $k$.
\begin{theorem}\label{6tmod3.1}
	For all $n,r,s,k\ge 0$, we have
	\begin{align}
		\bar{b}^k_{3r,3s}(3n+1)&\equiv \bar{b}^k_{3r,3s}(3n+2)\equiv 0\pmod{3}\label{6e3n1.1}\\
		\bar{b}^{3k+2}_{3r+1,3s}(3n+1)&\equiv\bar{b}^{3k+1}_{3r+2,3s}(3n+1)\equiv 0\pmod{3}\label{6e3n1.2}\\
		\bar{b}^{3k+2}_{3r+1,3s+1}(3n+2)&\equiv\bar{b}^{3k+1}_{3r+2,3s+2}(3n+2)\equiv 0\pmod{3}\label{6e3n2.1}.
	\end{align}
\end{theorem}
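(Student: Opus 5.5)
The plan is to reduce each generating function modulo $3$ to a single theta series times a function of $q^3$, and then read off which residue classes of exponents occur. Two facts drive this. First, $f_m^3\equiv f_{3m}\pmod 3$, so $f_m^{3e}\equiv f_{3m}^{e}\pmod 3$ for every integer $e$, including negative ones, since the congruence is between power series with constant term $1$ and can be inverted. Second, $\varphi(-q):=\phi(-q)=\frac{f_1^2}{f_2}=\sum_{n\in\mathbb{Z}}(-1)^nq^{n^2}$.

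In \eqref{6gf} the exponents are $(3s-2r)k$ on $f_2$, $-2sk$ on $f_1$ and $-(s-r)k$ on $f_4$. The first step is to write each exponent as $3e+\varepsilon$ with $\varepsilon$ a fixed small representative. The factors $f_m^{3e}$ then become functions of $q^3$ modulo $3$, and everything reduces to $H(q)\,G(q^3)$ for an explicit residual product $H$.

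For \eqref{6e3n1.1}, with $r,s$ replaced by $3r,3s$, all three exponents are multiples of $3$. Hence $\bar B^k_{3r,3s}(q)\equiv G(q^3)\pmod 3$, and the coefficients of $q^{3n+1}$ and $q^{3n+2}$ vanish.

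For \eqref{6e3n1.2}, in both cases the exponent of $f_1$ is $-6s'k\equiv0$. For $(r,s,k)=(3r'+1,3s',3k'+2)$ the $f_2$-exponent is $\equiv -2k\equiv -4$ and the $f_4$-exponent is $\equiv k\equiv 2 \pmod 3$. For $(3r'+2,3s',3k'+1)$ the $f_2$-exponent is $\equiv -4k\equiv-4$ and the $f_4$-exponent is $\equiv 2k\equiv 2$. In both cases
\[
\bar B\equiv \frac{f_4^2}{f_2^4}\,G(q^3)=\varphi(-q^2)^{-2}G(q^3)\equiv \frac{\varphi(-q^2)}{\varphi(-q^6)}\,G(q^3)\pmod 3,
\]
using $\varphi^{-2}=\varphi\cdot\varphi^{-3}$ and $\varphi(-q^2)^3\equiv\varphi(-q^6)$. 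Since $\varphi(-q^2)=\sum(-1)^nq^{2n^2}$ and $2n^2\equiv 0$ or $2\pmod 3$, no exponent $\equiv1\pmod 3$ appears, which gives the result.

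For \eqref{6e3n2.1}, in both cases $s-r\equiv0$, so the $f_4$ factor is a cube. For $(3r'+1,3s'+1,3k'+2)$ the $f_2$-exponent is $\equiv 1\cdot 2=2$ and the $f_1$-exponent is $\equiv -2\cdot 2\equiv -4$. For $(3r'+2,3s'+2,3k'+1)$ they are $\equiv 2$ and $\equiv -4$ as well. So
\[
\bar B\equiv \frac{f_2^2}{f_1^4}G(q^3)=\varphi(-q)^{-2}G(q^3)\equiv\frac{\varphi(-q)}{\varphi(-q^3)}G(q^3)\pmod 3,
\]
and since $n^2\equiv 0,1\pmod 3$, there are no terms $q^{3n+2}$.

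The only delicate step is the bookkeeping of exponents modulo $3$, including the signs coming from negative exponents, and checking that the residual quotient is exactly $\varphi(-q^j)^{-2}$ in each case. The rest is the standard quadratic-residue argument above.
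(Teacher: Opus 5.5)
Your proof is correct and follows essentially the same route as the paper: use $f_m^3\equiv f_{3m}\pmod 3$ to reduce each generating function to a single theta-type factor times a series in $q^3$, then read off the residues of that factor's exponents modulo $3$. The only difference is the leftover factor: you keep $\varphi(-q^j)^{-2}\equiv\varphi(-q^j)/\varphi(-q^{3j})$ and use that squares are $0,1\pmod 3$, whereas the paper chooses exponent representatives so that the leftover factor is $\psi(q^j)=f_{2j}^2/f_j$ and applies the $3$-dissection $\psi(q)=P(q^3)+q\psi(q^9)$, i.e.\ the fact that triangular numbers are $0,1\pmod 3$.
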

\begin{theorem}\label{6tmod3.2}
For all $n,r,s,k\ge 0$, $0\le t\le 2$, $0\le u\le 2$, we have
\begin{align}
\bar{b}^{9k+3}_{9r+3t+2,9s+3u}(9n+3)&\equiv\bar{b}^{9k+6}_{9r+3t+1,9s+3u}(9n+3)\equiv 0\pmod{3}\label{6e9.1}\\
\bar{b}^{9k+3}_{9r+3t+2,9s+3u+2}(9n+6)&\equiv\bar{b}^{9k+6}_{9r+3t+1,9s+3u+1}(9n+6)\equiv 0\pmod{3}\label{6e9.2}\\
\bar{b}^{9k+3}_{9r+3t+2,9s+3u+1}(9n+6)&\equiv\bar{b}^{9k+6}_{9r+3t+1,9s+3u+2}(9n+6)\equiv 0\pmod{3}.\label{6e9.3}
\end{align}
\end{theorem}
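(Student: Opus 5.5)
The plan is to reduce each congruence in Theorem~\ref{6tmod3.2} to a congruence modulo $3$ on arithmetic progressions $3n+1$ and $3n+2$. Most of these are exactly the statements of Theorem~\ref{6tmod3.1}; the one case not covered there will be proved directly by a $3$-dissection argument.

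\textbf{Step 1: reduction from $9n$ to $3n$.} Let $k'\in\{9k+3,9k+6\}$, so $k'=3K$ with $K=3k+1$ or $K=3k+2$. In \eqref{6gf} every exponent $(3s'-2r')k'$, $2s'k'$, $(s'-r')k'$ is then a multiple of $3$. Since $f_j^{3}\equiv f_{3j}\pmod 3$, we get
\begin{align*}
\bar{B}^{3K}_{r',s'}(q)\equiv \bar{B}^{K}_{r',s'}(q^3)\pmod 3 .
\end{align*}
Comparing coefficients gives
\begin{align*}
\bar{b}^{3K}_{r',s'}(9n+3)\equiv \bar{b}^{K}_{r',s'}(3n+1), \qquad \bar{b}^{3K}_{r',s'}(9n+6)\equiv \bar{b}^{K}_{r',s'}(3n+2) \pmod 3 .
\end{align*}
Here $r'=9r+3t+c=3(3r+t)+c$ and $s'=3(3s+u)+d$, so only the residues $c,d$ of $r',s'$ modulo $3$ matter for applying Theorem~\ref{6tmod3.1}.

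\textbf{Step 2: matching with Theorem~\ref{6tmod3.1}.} Each case reduces as follows.
\begin{enumerate}
\item In \eqref{6e9.1} the two cases become $\bar{b}^{3k+1}_{3R+2,3S}(3n+1)$ and $\bar{b}^{3k+2}_{3R+1,3S}(3n+1)$. Both are covered by \eqref{6e3n1.2}.
\item In \eqref{6e9.2} the two cases become $\bar{b}^{3k+1}_{3R+2,3S+2}(3n+2)$ and $\bar{b}^{3k+2}_{3R+1,3S+1}(3n+2)$. Both are covered by \eqref{6e3n2.1}.
\item In \eqref{6e9.3} the two cases become $\bar{b}^{3k+1}_{3R+2,3S+1}(3n+2)$ and $\bar{b}^{3k+2}_{3R+1,3S+2}(3n+2)$. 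These are not in Theorem~\ref{6tmod3.1}, so they need Step 3.
\end{enumerate}

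\textbf{Step 3: the remaining case.} Observe from \eqref{6gf} that $\bar{B}^{K}_{r,s}=\bar{B}^{1}_{rK,sK}$, since the exponents depend only on $rK$ and $sK$. In both remaining cases $rK\equiv 2$ and $sK\equiv 1\pmod 3$. Write $rK=3a+2$ and $sK=3b+1$. Peeling off cubes with $f_j^3\equiv f_{3j}\pmod 3$, the generating function becomes a power series in $q^3$ times the $(r,s)=(2,1)$ factor:
\begin{align*}
\bar{B}^{1}_{rK,sK}(q)\equiv F(q^3)\,\frac{f_4}{f_1^2f_2}\pmod 3 .
\end{align*}
Next use $f_1^{-2}\equiv f_1/f_3$ and $f_2^{-1}\equiv f_2^2/f_6 \pmod 3$. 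This gives
\begin{align*}
\frac{f_4}{f_1^2f_2}\equiv \frac{1}{f_3}\cdot\frac{f_2}{f_6}\cdot\frac{f_1f_4}{f_2}\cdot\frac{f_2^2}{f_6}\cdot\frac{f_6}{f_2^{2}} .
\end{align*}
It is cleaner to write $f_4/(f_1^2f_2)=\bigl(f_1f_4/f_2\bigr)\cdot f_1^{-3}$ and use $f_1^{-3}\equiv f_3^{-1}$. Then
\begin{align*}
\frac{f_4}{f_1^2f_2}\equiv \frac{\psi(-q)}{f_3}\pmod 3,
\end{align*}
where $\psi(-q)=f_1f_4/f_2=\sum_{n\ge0}(-1)^{n(n+1)/2}q^{n(n+1)/2}$. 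Triangular numbers are congruent to $0$ or $1$ modulo $3$, never $2$. Hence $\psi(-q)$ has no terms $q^{3n+2}$, and neither does the whole product, since the other factors are series in $q^3$. This proves \eqref{6e9.3}.

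The main obstacle is the bookkeeping in Step 3. One must check that the non-cube part reduces exactly to $f_4/(f_1^2f_2)$ for every residue pattern arising in \eqref{6e9.3}. The identity $\bar{B}^{K}_{r,s}=\bar{B}^{1}_{rK,sK}$ makes this uniform. The same argument also re-derives the cases handled by Theorem~\ref{6tmod3.1}, which serves as a consistency check.
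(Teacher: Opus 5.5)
Your proof is correct and is essentially the paper's argument. Both reduce $9n+3$ and $9n+6$ to $3n+1$ and $3n+2$ via $f_j^3\equiv f_{3j}\pmod 3$. Both then use that the leftover factor $\psi(q^2)$, $\psi(q)$ or $\psi(-q)$ has no terms in the relevant residue class modulo $3$; the paper gets this from \eqref{epsip}, and you get it from triangular numbers. Your identity $\bar{B}^{3K}_{r,s}(q)\equiv\bar{B}^{K}_{r,s}(q^3)$ lets you quote Theorem~\ref{6tmod3.1} for \eqref{6e9.1} and \eqref{6e9.2} rather than redo them.

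One thing to delete: the first display in Step~3 (after ``Next use $f_1^{-2}\equiv f_1/f_3$'') is false as written. Its right-hand side simplifies to $f_1f_4/(f_3f_6)$, which is not congruent to $f_1f_4/(f_2f_3)$. Keep only the identity $f_4/(f_1^2f_2)=(f_1f_4/f_2)\,f_1^{-3}$ that follows it.
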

In order to state congruences $\bar{b}_{r,s}^k(9n+3)\equiv 0\pmod{3}$ for certain $r,s,$ and $k$, we define the following sets where each triple represents values of $(t,u,\ell)$:
\begin{align*}
A_{t,u,\ell}:=&\{(1,2,5),(7,5,2),(4,8,8),(5,1,1),(2,4,7),(8,7,4)\}\\
B_{t,u,\ell}:=&\{(4,1,8),(7,4,2),(1,7,5),(8,2,4),(2,5,7),(5,8,1)\}\\
C_{t,u,\ell}:=&\{(2,0,7),(5,0,1),(8,0,4),(1,0,5),(4,0,8),(7,0,2)\}\\
D_{t,u,\ell}:=&\{(2,1,2),(8,4,5), (5,7,8), (4,2,1), (1,5,4), (7,8,7)\}\\
E_{t,u,\ell}:=&\{(7,1,7), (1,4,4), (4,7,1), (5,2,8), (8,5,5), (2,8,2)\}.
\end{align*}
\begin{theorem}\label{6tmod3.3}
For all $n,r,s,k,t\ge0$ and values of $(t,u,\ell)$ as in sets $A_{t,u,\ell}, B_{t,u,\ell}, C_{t,u,\ell},\\ D_{t,u,\ell}$ and $E_{t,u,\ell}$, we have
\begin{align*}
	\bar{b}^{9k+\ell}_{9r+t,9s+u}(9n+3)\equiv 0\pmod{3}.
\end{align*}
\end{theorem}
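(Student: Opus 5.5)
The plan is to reduce everything to a finite list of eta-quotients modulo $3$ and then perform two successive $3$-dissections. Set $R=9r+t$, $S=9s+u$ and $K=9k+\ell$. By \eqref{6gf},
\[
\bar{B}^{K}_{R,S}(q)=\frac{f_2^{(3S-2R)K}}{f_1^{2SK}f_4^{(S-R)K}}.
\]
Modulo $3$ we have $f_j^{9}\equiv f_{9j}^{3}\equiv f_{27j}$, so any factor $f_j^{9a}$ is a power series in $q^9$. Such a factor does not affect which residue classes modulo $9$ carry nonzero coefficients.

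\emph{Step 1: reduce the exponents.} Write each exponent as $9(\cdot)+e$, with
\[
e_2\equiv(3u-2t)\ell,\qquad e_1\equiv 2u\ell,\qquad e_4\equiv(u-t)\ell \pmod 9 .
\]
These residues depend only on $(t,u,\ell)$. This gives
\[
\bar{B}^{K}_{R,S}(q)\equiv G(q^9)\,\frac{f_2^{e_2}}{f_1^{e_1}f_4^{e_4}}\pmod 3,
\]
where $G$ is an eta-quotient. Next write each $e_i=3x_i+y_i$ with $0\le y_i\le 2$ and use $f_j^{3}\equiv f_{3j}$. The problem then becomes showing that the coefficients of $q^{9n+3}$ in
\[
\frac{f_6^{x_2}}{f_3^{x_1}f_{12}^{x_4}}\cdot\frac{f_2^{y_2}}{f_1^{y_1}f_4^{y_4}}
\]
vanish modulo $3$. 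For each of the thirty triples in $A_{t,u,\ell},\dots,E_{t,u,\ell}$ I first tabulate $(e_2,e_1,e_4)$ modulo $9$. The sets seem to be built so that each collapses to a small number of distinct residue patterns. Rescaling $\ell$ by units modulo $9$ permutes the triples, which explains the six elements per set. So in practice only a handful of genuinely different eta-quotients should need treatment.

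\emph{Step 2: first dissection.} For each pattern, express the "small" part $f_2^{y_2}/(f_1^{y_1}f_4^{y_4})$ through theta functions with known $3$-dissections:
\begin{align*}
\varphi(-q)=\frac{f_1^2}{f_2}&=\frac{f_9^2}{f_{18}}-2q\frac{f_3f_{18}^2}{f_6f_9},\qquad
\psi(q)=\frac{f_2^2}{f_1}=\frac{f_6f_9^2}{f_3f_{18}}+q\frac{f_{18}^2}{f_9},\\
\frac{f_2}{f_1^2}&=\frac{f_6^4f_9^6}{f_3^8f_{18}^3}+2q\frac{f_6^3f_9^3}{f_3^7}+4q^2\frac{f_6^2f_{18}^3}{f_3^6},
\end{align*}
together with $\varphi(q)$, which is the dissection of $\varphi(-q)$ with $q\to-q$. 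I also use the analogous formulas with $q\to q^2$ for the $f_2,f_4$ factors. I multiply these dissections out modulo $3$ and keep only the terms with exponents $\equiv 0\pmod 3$. After $q^3\to q$, this yields an explicit eta-quotient or a short sum of them, say $H(q)$, whose coefficients of $q^{3n+1}$ must be shown to vanish.

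\emph{Step 3: second dissection.} Apply the same dissections once more to $H(q)$, possibly after another use of $f_j^3\equiv f_{3j}$. The goal is to show that the $q^{3n+1}$ component is $\equiv 0\pmod 3$. Typically this component comes with a coefficient $3$, or several contributions cancel in pairs, for example terms with coefficients $2\cdot 2\equiv 1$ and $-1$.

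I expect the main obstacle to be the bookkeeping in Steps 2 and 3. After the first extraction, $H$ may not be a single product that is directly a theta function. In that case I first try to rewrite $H$ using $f_1^3\equiv f_3$ so that it becomes $\varphi(\pm q)^{a}\psi(q)^{b}$ times a function of $q^3$. Then only the $q^{1}\!\pmod{q^3}$ components of low powers of $\varphi$ and $\psi$ are needed, and these can be read off from the two-term dissections above. The verification that these components cancel modulo $3$ for every residue pattern coming from $A,B,C,D,E$ is the genuinely case-dependent step.
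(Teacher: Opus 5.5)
Your Step 1 is correct and in one respect cleaner than the paper. On each of the five sets the pair $(t\ell,u\ell)$ is constant modulo $9$, so $(e_2,e_1,e_4)$ is constant, namely $(2,2,5)$, $(5,7,3)$, $(8,0,4)$, $(7,4,7)$, $(4,5,3)$ for $A,B,C,D,E$. This is the rigorous reason the paper may treat one representative per set. (Small slip: modulo $3$ one has $f_j^9\equiv f_{9j}$, not $f_{9j}^3$; your conclusion that such factors are series in $q^9$ is unaffected.)

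The genuine gap is that the proposal stops exactly where the content of the theorem begins. You never show, for any of the five patterns, that the $q^{3n+1}$-component remaining after the first extraction vanishes modulo $3$. You only list possible mechanisms (a factor $3$, pairwise cancellation) and defer the check as ``the main obstacle''. As written, no coefficient $\bar{b}^{9k+\ell}_{9r+t,9s+u}(9n+3)$ has been shown to be divisible by $3$.

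The missing idea, which is the whole of the paper's argument, is that no cancellation is needed. Modulo $3$ and up to your factor $G(q^9)$, the generating function is a series in $q^3$ times $\psi(-q)$, $\psi(q)$, $\varphi(-q^2)$, $1/\psi(-q)$, $f_1f_2$ for $A,\dots,E$ respectively. Extract $q^{3n}$ (via \eqref{epsip}, \eqref{epsir}, \eqref{ef}) and replace $q^3$ by $q$. Every case then becomes a series in $q^3$ times $\varphi(-q^2)=f_2^2/f_4\equiv (f_6/f_{12})\,\psi(q^2)$. Since $\psi(q^2)=P(q^6)+q^2\psi(q^{18})$, this factor has no terms $q^{3n+1}$ at all (equivalently, $2m^2\not\equiv 1\pmod 3$). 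For $A$, for example, the quotient is $\frac{f_6}{f_3f_{12}^2}\psi(-q)$, and $\psi(-q)=P(-q^3)-q\psi(-q^9)$ gives
\[
\frac{f_2}{f_1f_4^2}\,P(-q)=\frac{f_6^5}{f_2f_3^2f_4f_{12}^2}\equiv\frac{f_6^4}{f_3^2f_{12}^2}\,\varphi(-q^2)\pmod 3 .
\]
So you should reduce the output of the first extraction to a single product of this shape before the second step. If you keep your normalized small parts instead, the vanishing rests on an extra congruence. For $D$, say, $f_2/(f_1f_4)\equiv\varphi(-q)\varphi(-q^2)/f_3$ makes the $q^{3n}$-component a two-term sum. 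That congruence between the two pieces does hold (the coefficients $4\equiv 1$ and $-1$ cancel), but you would have to prove it. Note also that $f_1f_2=\varphi(-q)\psi(q)$ exactly, so case $E$ is already within reach of your own dissections.
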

\begin{remark}
	The congruence proved in \cite[Theorem~4.9]{chacon} can be obtained from the above result for $r=0$, $s=0$ and $(t,u,\ell)=(2,1,2)$.
\end{remark}
Similarly, to state congruences $\bar{b}_{r,s}^k(9n+6)\equiv 0\pmod{3}$ for certain $r,s,$ and $k$, we define the following sets where each triple represents values of $(t,u,\ell)$:
\begin{align*}
G_{t,u,\ell}:=&\{(4,2,8), (1,5,5), (7,8,2), (2,1,7), (8,4,4), (5,7,1)\}\\
H_{t,u,\ell}:=&\{(7,2,2), (4,5,8), (1,8,5), (8,1,4), (5,4,1), (2,7,7)\}\\
I_{t,u,\ell}:=&\{(1,1,5), (4,4,8), (7,7,2), (2,2,7), (5,5,1), (8,8,4)\}\\
J_{t,u,\ell}:=&\{(7,1,2), (1,4,5), (4,7,8), (5,2,1), (8,5,4), (2,8,7)\}\\
K_{t,u,\ell}:=&\{(2,6,7), (5,6,1), (8,6,4), (1,3,5), (4,3,8), (7,3,2)\}\\
L_{t,u,\ell}:=&\{(2,3,7), (5,3,1), (8,3,4), (1,6,5), (4,6,8), (7,6,2)\}\\
M_{t,u,\ell}:=&\{(8,1,5), (5,4,8), (2,7,2), (4,2,7), (4,5,1), (1,8,4)\}\\
N_{t,u,\ell}:=&\{(5,1,8), (2,4,2), (8,7,5), (1,2,4), (7,5,7), (4,8,1)\}\\
O_{t,u,\ell}:=&\{(4,1,1), (7,4,7), (1,7,4), (8,2,5), (2,5,2), (5,8,8)\}\\
P_{t,u,\ell}:=&\{(1,1,4), (4,4,1), (7,7,7), (2,2,2), (5,5,8), (8,8,5)\}.
\end{align*}
\begin{theorem}\label{6tmod3.4}
	For all $n,r,s,k,t\ge0$ and values of $(t,u,\ell)$ as in sets $G_{t,u,\ell}, H_{t,u,\ell}, I_{t,u,\ell},\\ J_{t,u,\ell}, K_{t,u,\ell}, L_{t,u,\ell}, M_{t,u,\ell}, N_{t,u,\ell}, O_{t,u,\ell},$ and $P_{t,u,\ell}$, we have
	\begin{align*}
		\bar{b}^{9k+\ell}_{9r+t,9s+u}(9n+6)\equiv 0\pmod{3}.
	\end{align*}
\end{theorem}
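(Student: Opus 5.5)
The plan is to reduce the generating function \eqref{6gf} modulo $3$ to a product of a series in $q^9$ and a small eta quotient depending only on $(t,u,\ell)$. Then we $9$-dissect that small quotient and read off the coefficients of $q^{9n+6}$.

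\textbf{Step 1 (reduction modulo $3$).} With $k'=9k+\ell$, $r'=9r+t$ and $s'=9s+u$, the exponents in \eqref{6gf} are
\[
a'=(3s'-2r')k',\qquad b'=2s'k',\qquad c'=(s'-r')k'.
\]
Modulo $9$ they are congruent to
\[
a\equiv(3u-2t)\ell,\qquad b\equiv 2u\ell,\qquad c\equiv(u-t)\ell .
\]
The binomial theorem gives $f_j^{3}\equiv f_{3j}\pmod 3$, hence $f_j^{9}\equiv f_{9j}\pmod 3$. So, modulo $3$,
\[
\bar B^{k'}_{r',s'}(q)\equiv F(q^9)\cdot \frac{f_2^{a}}{f_1^{b}f_4^{c}},
\]
where $F(q^9)$ is an eta quotient in $q^9$ and $0\le a,b,c\le 8$. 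Writing $a=3a_1+a_0$ (and similarly for $b,c$) and using $f_j^3\equiv f_{3j}$ once more, we reach
\[
\bar B^{k'}_{r',s'}(q)\equiv F(q^9)\,G(q^3)\,H(q),
\]
where $G(q^3)$ is a quotient of $f_6,f_3,f_{12}$ and $H(q)=f_2^{a_0}/(f_1^{b_0}f_4^{c_0})$ with $a_0,b_0,c_0\in\{0,1,2\}$. We may also replace $f_j^{-1}$ by $f_j^{2}/f_{3j}$ modulo $3$, which makes every exponent in $H$ nonnegative. A direct check shows that the six triples in each of $G_{t,u,\ell},\dots,P_{t,u,\ell}$ give the same pair $(G,H)$ up to the factor in $q^9$. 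The sets were presumably built this way, as orbits under $\ell\mapsto$ multiples coprime to $3$ combined with the matching shifts of $t,u$. It therefore suffices to treat one representative per set, giving ten computations in all.

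\textbf{Step 2 ($3$-dissection of $H$).} Next express $H(q)$ through Ramanujan's functions, via $\varphi(-q)=f_1^2/f_2$, $\psi(q)=f_2^2/f_1$, $\psi(-q)=f_1f_4/f_2$ and $\varphi(q)=f_2^5/f_1^2f_4^2$, together with pure powers such as $f_1f_2$ or $f_1^2f_4$. Then insert the classical $3$-dissections:
\begin{align*}
\varphi(-q)&=\varphi(-q^9)-2q\,f_3f_{18}/f_6f_9\cdot(\text{a }q^9\text{-factor}),\\
\psi(q)&=f_6f_9^2/(f_3f_{18})+q\,\psi(q^9),\\
f_1&=f_{27}\bigl(a(q^9)-q-q^2/a(q^9)\bigr)\quad\text{(cubic continued fraction form)},
\end{align*}
and similarly for $f_2,f_4$ via $q\to q^2,q^4$. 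Multiplying out and reducing the coefficients modulo $3$ isolates the terms with exponent $\equiv 2\pmod 3$. After replacing $q^3\to q$ this gives $\sum \bar b(3n+2)q^n$ modulo $3$, again as a series in $q^3$ times a short explicit combination of theta-type products.

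\textbf{Step 3 (second dissection, and the main obstacle).} Dissect once more and extract the terms $q^{3n+2}$, which correspond to $9n+6$. The claim is that this component vanishes identically modulo $3$. The obstacle is exactly this cancellation. The cross terms from the product of two or three dissected factors typically give coefficients such as $1+1+1$ or $2\cdot(-2)-2$, and one must verify they are $\equiv 0\pmod 3$. I would first try to pre-simplify $H$ modulo $3$ into a single theta series whose exponents are quadratic forms, e.g.\ $f_1^3\equiv\sum(-1)^n(2n+1)q^{n(n+1)/2}$ or $\varphi(q)\psi(q^2)$-type products. Then the vanishing of the $9n+6$ coefficients reduces to the statement that the quadratic form never represents a class $\equiv 6\pmod 9$, or does so only with weight $2n+1\equiv 0\pmod 3$. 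Where that is not possible, I would carry out the explicit product of dissections case by case, using the $q^9$-factor invariance from Step 1 to limit the work to the ten representatives.
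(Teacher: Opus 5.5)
Your overall route is the paper's: reduce with $f_j^3\equiv f_{3j}$, then do two successive $3$-dissections. Your Step~1 bookkeeping of the exponents modulo $9$ also usefully justifies the ``one representative per set'' reduction, which the paper only asserts. But the plan as written does not prove the statement. First, you extract the residues in the wrong order. Since $9n+6=3(3n+2)$, you must first keep the exponents $\equiv 0\pmod 3$, which gives $\sum\bar b(3n)q^n$, and only then take the terms $q^{3n+2}$. Taking the class $2$ twice gives $\bar b(9n+8)$. This is not cosmetic. For the sets $G,H,I,J$ the reduced series is a series in $q^3$ times $\psi(\pm q)$, whose $q^{3n+2}$-component is identically zero. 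So your Step~2 returns $0$, and Step~3 then says nothing about $9n+6$. Second, your $3$-dissection of $f_1$ is wrong; it has the shape of the Rogers--Ramanujan $5$-dissection. A series $f_{27}\bigl(a(q^9)-q-q^2/a(q^9)\bigr)$ has only exponents $\equiv 0,1,2\pmod 9$, while $f_1=1-q-q^2+q^5+q^7-\cdots$. The cubic continued fraction actually enters through the dissection \eqref{ef} of $f_1f_2$, which carries the coefficient $-2q^2$.

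Third, and most importantly, you leave the decisive step as an ``obstacle'' and miss the idea that removes it: no cross-term cancellation ever has to be checked. After Step~1, every $H$ is, modulo $3$, a series in $q^3$ times one of $\psi(q)$, $\psi(-q)$, $\psi(q^2)$, $1/\psi(-q)$ or $f_1f_2$. For $G$, for instance, $f_2^2/(f_1^5f_4^2)\equiv\psi(-q)f_6/(f_3^2f_{12})$. Extract the $q^{3n}$-component with \eqref{epsip}, \eqref{epsir} or \eqref{ef}, and replace $q^3$ by $q$. Then reduce modulo $3$ again, for example with $1/(f_1f_2)\equiv\psi(q)/f_6$ and $f_2^2/(f_1^2f_4^2)\equiv\psi(-q)f_6/(f_3f_{12})$. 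In every case you land on a series in $q^3$ times $\psi(q)$ or $\psi(-q)$, which has no $q^{3n+2}$ terms because $m(m+1)/2\not\equiv 2\pmod 3$. This is the paper's argument. The re-reduction must come after the first extraction, when your $G(q^3)$ has become $G(q)$ and recombines with the extracted piece. A criterion such as ``the quadratic form misses $6\bmod 9$'' applied to $H$ alone cannot work, since $G(q^3)$ mixes all classes $\equiv 0\pmod 3$.

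Finally, your claimed ``direct check'' in Step~1 fails for $(4,2,7)\in M$. That triple gives residues $(4,1,4)$ rather than $(7,1,1)$, and its second stage yields $f_1f_2$ instead of $\psi(\pm q)$. Indeed, modulo $3$ the generating function of $\bar b^{7}_{4,2}$ is $f_2^4/(f_1f_4^4)$ times $1+O(q^{18})$. The $q^6$ coefficient of $f_2^4/(f_1f_4^4)$ is $-5$, so $\bar b^{7}_{4,2}(6)\equiv 1\pmod 3$. That triple is a misprint, presumably for $(7,2,7)$, and no proof can cover it as printed.
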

In the following theorem, we prove infinite family of congruences modulo powers of $2$ using the theory of Hecke eigenforms.
	\begin{theorem}\label{6thec1}
	Let $j$ and $n$ be non-negative integers and let $k,\ell\ge0$, $r, \alpha,\beta\ge 1$ be integers such that $\alpha$ and $\beta$ are odd. Let $p_1, p_2,\dots,p_{j+1}$ be primes such that $p_i\ge 3$ and $p_i\not\equiv1\pmod 4$ for each $1\le i\le j+1$. For any integer $t\not\equiv 0\pmod{p_{j+1}}$, we have
	\begin{align*}
		\bar{b}^{2^\ell\beta}_{r, 2^k\alpha}\left(2p_1^2p_2^2\cdots p_j^2p_{j+1}^2+p_1^2p_2^2\cdots p_j^2p_{j+1}(t+p_{j+1})\right)\equiv 0\pmod{2^{k+\ell+2}}.
	\end{align*}
\end{theorem}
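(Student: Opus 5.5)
The plan is to reduce the generating function modulo $2^{k+\ell+2}$ to a constant plus $2^{k+\ell+1}$ times a single $q$-series. That series should be (up to a power of $q$) a Hecke eigenform whose $p$-th eigenvalue vanishes for $p\equiv 3\pmod 4$. I have not checked that the reduction closes in this form; the main risk is flagged in the last paragraph. Throughout write $N:=k+\ell$, $K:=2^\ell\beta$ and $s=2^k\alpha$, so $sK=2^N\alpha\beta$ with $\alpha\beta$ odd. Also set $X(q):=\sum_{n\ge1}q^{n^2}$, so $\phi(q)=1+2X(q)$.

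First I raise Theorem \ref{6t1} to the $K$-th power:
\begin{align*}
\bar{B}^{K}_{r,s}(q)=\phi(q)^{2^N\alpha\beta}\prod_{i\ge1}\phi(q^{2^i})^{(r+s)2^{i-1}K}.
\end{align*}
For any $M\ge0$ and odd $\gamma$, the binomial theorem gives
\begin{align*}
\phi(q)^{2^M\gamma}\equiv 1+2^{M+1}\gamma X(q)+2^{M+1}(2^M\gamma-1)\gamma X(q)^2\pmod{2^{M+2}}.
\end{align*}
Here the higher binomial terms $\binom{2^M\gamma}{m}2^m$ with $m\ge3$ are divisible by $2^{M+2}$, and $X^2$ can be replaced modulo $2$ using $X(q)^2\equiv X(q^2)$. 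The first factor $\phi(q)^{sK}$ is therefore $\equiv 1+2^{N+1}X(q)\pmod{2^{N+2}}$ up to terms supported on $n=x^2+y^2$.

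For the factors with $i\ge1$, the exponent has $2$-adic valuation $\ell+i-1+v_2(r+s)$, which can be below $N$. Here I would repeatedly use $\phi(q)^2=\phi(q^2)^2+4q\psi(q^4)^2$ and $f_1^{2^m}\equiv f_2^{2^{m-1}}\pmod{2^m}$. The aim is to regroup the whole product into the shape
\begin{align*}
\bar{B}^{K}_{r,s}(q)\equiv F(q^2)+2^{N+1}\,G(q)\pmod{2^{N+2}}.
\end{align*}
Here $F$ is a product of $\phi$'s in $q^2$ whose relevant coefficients are harmless, and $G$ is, modulo $2$, a combination of $X(q^{2^i})$ and of the weight-one theta series $\phi(q^{a})\phi(q^{b})$ attached to the forms $x^2+y^2$ (and possibly $x^2+2y^2$).

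Next I analyse the argument $n=2P^2p^2+P^2p(t+p)=P^2p\,(3p+t)$, where $P=p_1\cdots p_j$ and $p=p_{j+1}$. Since $p\nmid t$, we have $v_p(n)$ odd. Hence $n$ is neither $2^i$ times a square nor of the form $2^i(x^2+y^2)$, because $p\equiv3\pmod4$ must divide a sum of two squares to an even power. This kills every $X(q^{2^i})$-term and every $\phi(q^{2^i})^2$-term at $n$.

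More structurally, $\phi(q)^2=\eta(2z)^{10}/(\eta(z)^4\eta(4z)^4)$ is a Hecke eigenform of weight $1$ with CM by $\mathbb{Q}(i)$, so $a(p)=0$ for $p\equiv3\pmod4$. Hecke multiplicativity then gives $a(p\,m)=0$ for $p\nmid m$ and $a(p^2m)=-\chi(p)a(m)$ for the $p_i$ with $i\le j$. Iterating over $p_1,\dots,p_j$ handles the factor $P^2$, and the final prime $p_{j+1}$ appears to odd order and gives $0$. The same argument applies to $\phi(q^{2^i})^2$ after rescaling, and together these give $\bar b^{K}_{r,2^k\alpha}(n)\equiv0\pmod{2^{N+2}}$.

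The main obstacle is the regrouping step in the second paragraph. When $r$ is odd, the $i=1$ factor has $2$-adic valuation only $\ell$ in its exponent. Its expansion then contributes terms of size $2^{\ell+1}$ that must either cancel or land on $q$-powers $\equiv$ something that $n$ avoids. These terms are also multiplied by the $2^{N+1}X(q)$ term, producing mixed products $X(q)X(q^2)$ supported on $x^2+2y^2$, which does represent primes $p\equiv3\pmod8$. I would first try to show that all such cross terms enter with an even coefficient and so vanish modulo $2^{N+2}$. Failing that, I would rewrite the surviving piece as an eta quotient such as $f_1^2f_8^2/(f_2f_4)$ modulo $2$ and check directly that it is a CM eigenform for $\mathbb{Q}(i)$ rather than $\mathbb{Q}(\sqrt{-2})$.
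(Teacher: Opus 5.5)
Your proposal has a genuine gap, and it is the step you flag yourself: the regrouping $\bar B^{K}_{r,s}(q)\equiv F(q^2)+2^{N+1}G(q)\pmod{2^{N+2}}$ with $F$ ``harmless'' is never established. On part of the stated range it cannot be established, because the congruence is false there.

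Your argument $n=P^2p(3p+t)$ is odd only when $t$ is even. When $t$ is odd, $n$ is even. The terms of size $2^{\ell+1}$ produced by the factors $\phi(q^{2^i})^{(r+s)2^{i-1}K}$ then survive, and the congruence can fail. For example, take $k=3$, $\ell=0$, $r=\alpha=\beta=1$. Then $\bar B^1_{1,8}(q)=\phi(q)^8\phi(q^2)^9\phi(q^4)^{18}\cdots$ and the modulus is $32$. From $\phi(q)^2=\phi(q^2)^2+4q\psi(q^4)^2$ you get $\phi(q)^8\equiv\phi(q^2)^8+16q\psi(q^4)^2$, and also $\phi^{16}\equiv 1\pmod{32}$. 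Hence
\[
\bar B^1_{1,8}(q)\equiv\phi(q^2)\phi(q^4)^2\phi(q^8)^4\phi(q^{16})^8+16q\psi(q^4)^2\pmod{32}.
\]
With $j=0$, $p_1=3$, $t=5$ the argument is $42$. (This is also $n=1$, $t=5$ in the form $2p^2n+p(t+p)$ used in the paper's proof.) So $\bar b^1_{1,8}(42)$ is congruent to the coefficient of $q^{21}$ in $\phi(q)\phi(q^2)^2\phi(q^4)^4\phi(q^8)^8$. Here $r_j(m)$ denotes the number of representations of $m$ as a sum of $j$ squares. Modulo $32$ that coefficient reduces to $2\bigl(r_2(10)+r_4(5)+r_2(6)+r_4(3)\bigr)=176\equiv 16$.

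Your concern about the mixed products $X(q)X(q^2)$ is misplaced. They carry $2^{N+1}\cdot 2^{\ell+1}$ and vanish. The obstruction is the $q^2$-factors on their own, which only affect even exponents.

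The missing idea is the one the paper uses: work only on the odd progression.
\begin{enumerate}
\item Lemma \ref{6lhec} takes the $2$-dissection of $1/f_1^2$ and raises it to the power $2^{k+\ell}\alpha\beta$. Every odd-index binomial term beyond the first dies modulo $2^{k+\ell+2}$. This leaves $\sum_n\bar b^{2^\ell\beta}_{r,2^k\alpha}(2n+1)q^n\equiv 2^{k+\ell+1}f_1^{12}$, which is independent of $r$.
\item It then uses that $\eta^6(4z)$ is a Hecke eigenform with $\lambda(p)=0$ for $p\equiv 3\pmod 4$. This gives $c(p^2n+pm)=0$ for $p\nmid m$ and $c(p^2n)=p^2c(n)$, and the paper iterates over $p_1,\dots,p_j$.
\end{enumerate}
The paper's proof only produces $t=m(1-p^2)$, which is even, so it too establishes the result only for even $t$.

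In your own language the odd case is immediate. Since $\phi(q)=\phi(q^4)+2q\psi(q^8)$, the odd part of $\phi(q)^{2^N\alpha\beta}$ is $\equiv 2^{N+1}q\psi(q^8)\pmod{2^{N+2}}$. Every other factor is a series in $q^2$ that is $\equiv 1\pmod 2$. Hence for odd $n$, $\bar b(n)\equiv 2^{N+1}$ or $0\pmod{2^{N+2}}$ according as $n$ is or is not a square. Since $v_p(n)$ is odd, $n$ is not a square, and you are done with no Hecke theory. So restrict to even $t$, and drop the regrouping of the $i\ge 1$ factors entirely.
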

The following corollary can be easily obtained by setting $p_1=p_2=\dots=p_{j+1}=p$ in Theorem \ref{6thec1}.
\begin{corollary}
Let $j$ and $n$ be non-negative integers and let $k,\ell\ge0$, $r, \alpha,\beta\ge 1$ be integers such that $\alpha$ and $\beta$ are odd. Let $p_i\ge 3$  be a prime with $p\equiv 3\pmod{4}$. For any integer $t\not\equiv0\pmod p$, we have
	\begin{align*}
		\bar{b}^{2^\ell\beta}_{r, 2^k\alpha}\left(2p^{2j+2}n+p^{2j+1}t+p^{2j+2}\right)\equiv 0\pmod{2^{\ell+k+2}}.
	\end{align*}
\end{corollary}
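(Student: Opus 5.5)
The plan is to read the corollary off Theorem \ref{6thec1} by choosing the parameters suitably. The one point needing care is that Theorem \ref{6thec1} as stated has no free variable $n$ in its argument, so the $n$ of the corollary has to be absorbed into the free integer $t$.

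First, take $p_1=p_2=\dots=p_{j+1}=p$. The hypotheses of Theorem \ref{6thec1} hold: $p\ge 3$, and $p\equiv 3\pmod 4$ gives $p\not\equiv 1\pmod 4$. As far as I can see, Theorem \ref{6thec1} does not require the primes $p_i$ to be distinct; if its proof did need distinctness, this specialization would fail and the corollary would have to be proved directly. With this choice, Theorem \ref{6thec1} says that for every integer $t'\not\equiv 0\pmod p$,
\begin{align*}
\bar{b}^{2^\ell\beta}_{r,2^k\alpha}\left(2p^{2j+2}+p^{2j+1}(t'+p)\right)\equiv 0\pmod{2^{k+\ell+2}}.
\end{align*}

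Second, given $n\ge 0$ and $t\not\equiv 0\pmod p$, set
\begin{align*}
t':=t+2p(n-1).
\end{align*}
Then $t'\equiv t\not\equiv 0\pmod p$, so $t'$ is admissible. Substituting gives
\begin{align*}
2p^{2j+2}+p^{2j+1}\bigl(t+2pn-2p+p\bigr)=2p^{2j+2}n+p^{2j+1}t+p^{2j+2},
\end{align*}
which is exactly the argument in the corollary. Hence the stated congruence modulo $2^{\ell+k+2}$ follows.

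The only step I expect to need checking is the legitimacy of this choice of $t'$. Theorem \ref{6thec1} allows any integer $t$ prime to $p_{j+1}$, and $t'$ may be negative (for example when $n=0$). This causes no difficulty: the resulting argument of $\bar{b}$ is the one appearing in the corollary, so it is nonnegative whenever the corollary's statement is meaningful.
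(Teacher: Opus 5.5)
Your reduction is the paper's own: set $p_1=\dots=p_{j+1}=p$ in Theorem \ref{6thec1}. Your substitution $t'=t+2p(n-1)$ correctly supplies the $n$ missing from the printed statement, and you are right that distinctness of the $p_i$ plays no role. The gap is the step you waved through, namely applying Theorem \ref{6thec1} to every $t'\not\equiv 0\pmod p$. That theorem, and hence this corollary, is false for odd $t$.

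Since $t'\equiv t\pmod 2$ and $N:=2p^{2j+2}n+p^{2j+1}t+p^{2j+2}\equiv t+1\pmod 2$, an odd $t$ gives an even argument. The only input available, Lemma \ref{6lhec} in the form \eqref{ehec2}, controls $\bar{b}^{2^\ell\beta}_{r,2^k\alpha}$ only at odd arguments. In the paper's proof the loss occurs between \eqref{e1.4.10} and \eqref{e1.4.11}. There $m(1-p^2)$ is always divisible by $8$, so as $m$ runs over residues prime to $p$ it reaches only the even classes of $t$ modulo $2p$, not every $t$ prime to $p$.

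A concrete failure: take $k=2$, $\ell=0$, $\alpha=\beta=r=1$, $p=3$, $j=n=0$, $t=1$. The corollary then asserts $\bar{b}^{1}_{1,4}(12)\equiv 0\pmod{16}$. Expanding $\bar{B}^1_{1,4}(q)=\phi(q)^4\phi(q^2)^5\phi(q^4)^{10}\phi(q^8)^{20}\cdots$ gives $\bar{b}^{1}_{1,4}(12)=162984\equiv 8\pmod{16}$. Modulo $16$ the value is $8$ times the unique positive solution of $2a^2+4b^2=12$, coming from the cross term of $\phi(q^2)^5$ and $\phi(q^4)^{10}$.

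What can be proved is the case $t$ even, and there it is cleaner to argue directly than through Theorem \ref{6thec1}. Then $N=p^{2j+1}(2pn+t+p)$ is odd and $p^{2j+1}$ exactly divides $N$, so \eqref{ehec2} gives $\bar{b}^{2^\ell\beta}_{r,2^k\alpha}(N)\equiv 2^{k+\ell+1}c(N)\pmod{2^{k+\ell+2}}$. Because $\lambda(p)=c(p)=0$ and $\chi(p)=-1$, the Hecke relation reads $c(pm)=p^2c(m/p)$. Applying it $j+1$ times yields $c(N)=p^{2j+2}c(N/p^{2j+2})=0$, since $p^{2j+2}\nmid N$.

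So the corollary, and Theorem \ref{6thec1}, need the extra hypothesis $2\mid t$. Equivalently, they should be stated for the arguments $2p^{2j+2}n+2p^{2j+1}t+p^{2j+2}$ with $p\nmid t$.
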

The rest of the paper is organized as follows. Section \ref{6s2} provides a brief summary of key definitions and results required for subsequent sections. The proofs of our main results are presented in Sections \ref{6s3}-\ref{6s6}, and Section \ref{6s7} concludes the paper with interesting remarks.
\section{Preliminaries}\label{6s2}
We begin this section by recalling the prerequisites concerning $q$-series and modular forms that will be used in the proofs of our main theorems. For an introduction to $q$-series and modular forms, two standard references are \cite{gasper} and \cite{koblitz}, respectively. We first recall $q$-series identities and dissection formulas.
\begin{theorem}[{{\cite[Equation~(1.6.1)]{poq}}}]
We have
\begin{align*}
f_1=(q;q)_\infty=\sum_{k=-\infty}^{\infty}(-1)^kq^{\frac{3k^2+k}{2}}.
\end{align*}
\end{theorem}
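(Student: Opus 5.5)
This is Euler's pentagonal number theorem, which the paper cites from the literature. The cleanest route is the Jacobi triple product identity. For $|q|<1$ and $z\neq 0$ it reads
\begin{align*}
\sum_{k=-\infty}^{\infty} z^k q^{k^2} = \prod_{n=1}^{\infty}(1-q^{2n})(1+zq^{2n-1})(1+z^{-1}q^{2n-1}).
\end{align*}

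The plan is to specialise $q$ and $z$ so that the right-hand side collapses to $(q;q)_\infty$.

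First, replace $q$ by $q^{3/2}$ and set $z=-q^{1/2}$. The left-hand side then becomes
\begin{align*}
\sum_{k}(-1)^k q^{\frac{3k^2}{2}+\frac{k}{2}} = \sum_{k}(-1)^k q^{\frac{3k^2+k}{2}},
\end{align*}
which is exactly the stated series.

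Second, compute the right-hand side under the same substitution. It becomes
\begin{align*}
\prod_{n\ge1}(1-q^{3n})(1-q^{3n-1})(1-q^{3n-2}).
\end{align*}
The three factors run over exponents $\equiv 0,2,1 \pmod 3$, so together they cover every positive integer exactly once. The product is therefore $(q;q)_\infty=f_1$.

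Third, justify the half-integer powers. Both sides depend only on integer powers of $q$ after simplification, so one can first work with $q=x^2$ in the formal or analytic identity and then substitute back. Alternatively, apply the triple product in the form $\sum_k z^k q^{k(k-1)/2}$, which sidesteps fractional exponents altogether.

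The only real obstacle is the triple product identity itself, if it is not assumed. For that I would use the standard argument. Let $F(z)$ denote the product. Check the functional equation $F(zq^2)=(zq)^{-1}F(z)$. This forces the Laurent coefficients to satisfy $c_k=q^{k^2}c_0$. The remaining constant $c_0=1$ then follows from a limiting or evaluation argument, for instance by comparing at $z=i$ with $q\to q^4$, or by Gauss's method.

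Since the paper simply cites \cite{poq}, the proof in context reduces to the substitution above.
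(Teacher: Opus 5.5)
Your proof is correct. The paper gives no proof of its own and only cites Hirschhorn's (1.6.1); your specialization of Jacobi's triple product at $q\to q^{3/2}$, $z=-q^{1/2}$ is the standard derivation of Euler's pentagonal number theorem, your recombination of $(1-q^{3n})(1-q^{3n-1})(1-q^{3n-2})$ into $(q;q)_\infty$ is correct, and your sketch of the triple product itself via $F(zq^2)=(zq)^{-1}F(z)$ is sound.
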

\begin{lemma}[{{\cite[Lemma~2.2]{chacon}}}]\label{6l1}
We have
\begin{align*}
(-q;-q)_\infty=\dfrac{f_2^3}{f_1f_4}.
\end{align*}
\end{lemma}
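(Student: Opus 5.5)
The identity to prove is $(-q;-q)_\infty = f_2^3/(f_1f_4)$, and the plan is a direct manipulation of infinite products. The key step is to split $(-q;-q)_\infty=\prod_{n\ge1}(1-(-q)^n)$ according to the parity of $n$. For $n$ even, $(-q)^n=q^n$, so the even factors contribute $\prod_{n\ge1}(1-q^{2n})=f_2$. For $n$ odd, $(-q)^n=-q^n$, so the odd factors contribute $\prod_{n\ge1}(1+q^{2n-1})=(-q;q^2)_\infty$. This gives
\begin{align*}
(-q;-q)_\infty = f_2\,(-q;q^2)_\infty .
\end{align*}

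Next I rewrite $(-q;q^2)_\infty$ as a quotient of $f_k$'s. Since $(1-q^{2n-1})(1+q^{2n-1})=1-q^{4n-2}$,
\begin{align*}
(-q;q^2)_\infty=\frac{(q^2;q^4)_\infty}{(q;q^2)_\infty}.
\end{align*}
Separating the odd-indexed factors from $f_1$ and from $f_2$ gives the standard identities
\begin{align*}
(q;q^2)_\infty=\frac{f_1}{f_2}, \qquad (q^2;q^4)_\infty=\frac{f_2}{f_4}.
\end{align*}
Therefore
\begin{align*}
(-q;q^2)_\infty=\frac{f_2}{f_4}\cdot\frac{f_2}{f_1}=\frac{f_2^2}{f_1f_4}.
\end{align*}

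Multiplying by the factor $f_2$ from the even-indexed terms yields
\begin{align*}
(-q;-q)_\infty=\frac{f_2^3}{f_1f_4},
\end{align*}
which is the claim. All products converge absolutely for $|q|<1$, so the rearrangements and the splitting of products are justified.

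There is no real obstacle here. The only point needing care is the sign bookkeeping in the parity split: odd $n$ turns $1-(-q)^n$ into $1+q^n$, while even $n$ leaves it as $1-q^n$.
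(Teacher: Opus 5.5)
Your proof is correct: splitting $\prod_{n\ge1}(1-(-q)^n)$ by the parity of $n$ gives $f_2\,(-q;q^2)_\infty$, and then $(-q;q^2)_\infty=(q^2;q^4)_\infty/(q;q^2)_\infty=f_2^2/(f_1f_4)$ finishes the argument. The paper does not prove this identity itself but simply cites Lemma 2.2 of Chacon and Sellers, and your product manipulation is the standard elementary verification, so nothing is missing.
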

The following lemma is a consequence of binomial theorem and will be used repeatedly in the sequel. 
\begin{lemma}\label{bt}
	For any prime $p$ and integers $\ell\ge0$ and $a\ge1$, we have
	\begin{align*}
		f_\ell^{p^a} \equiv f_{\ell p}^{p^{a-1}} \pmod{p^a}.
	\end{align*}
\end{lemma}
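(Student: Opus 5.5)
We prove Lemma \ref{bt} by induction on $a$, starting from the elementary fact that for any power series $F(q)\in\mathbb{Z}[[q]]$ and prime $p$,
\begin{align*}
F(q)^p\equiv F(q^p)\pmod p .
\end{align*}
This follows from the multinomial theorem together with Fermat's little theorem applied to the coefficients. It is enough to check it for a finite product $\prod(1-q^{\ell n})$ truncated at any degree, since $(x+y)^p\equiv x^p+y^p$ in $\mathbb{F}_p[[q]]$ and $c^p\equiv c$ for integers $c$. Applied to $F=f_\ell=(q^\ell;q^\ell)_\infty$ this gives $f_\ell^p\equiv f_{\ell p}\pmod p$, which is the case $a=1$. (For $\ell=0$ the statement is degenerate and can be read as trivially true by convention; we work with $\ell\ge1$.)

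For the inductive step, assume $f_\ell^{p^{a}}=f_{\ell p}^{p^{a-1}}+p^aG$ for some $G\in\mathbb{Z}[[q]]$. Raising both sides to the $p$-th power and expanding binomially gives
\begin{align*}
f_\ell^{p^{a+1}}=f_{\ell p}^{p^{a}}+\sum_{i=1}^{p}\binom{p}{i}f_{\ell p}^{p^{a-1}(p-i)}p^{ai}G^i .
\end{align*}
For $1\le i\le p-1$ the coefficient $\binom{p}{i}p^{ai}$ is divisible by $p^{1+a}$. For $i=p$ the term is divisible by $p^{ap}$, and $ap\ge a+1$ since $p\ge2$ and $a\ge1$. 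Hence $f_\ell^{p^{a+1}}\equiv f_{\ell p}^{p^a}\pmod{p^{a+1}}$, which completes the induction.

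There is no real obstacle. The only point needing care is the standard lifting fact $X\equiv Y\pmod{p^a}\Rightarrow X^p\equiv Y^p\pmod{p^{a+1}}$, including the boundary term $i=p$, where $p=2$, $a=1$ gives exactly $p^{2}=p^{a+1}$, so the bound is still sufficient.
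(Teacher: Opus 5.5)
Your proof is correct and is essentially the argument the paper intends. The paper gives no proof beyond calling the lemma a consequence of the binomial theorem, and your base case $f_\ell^p\equiv f_{\ell p}\pmod p$ together with the lifting step $X\equiv Y\pmod{p^a}\Rightarrow X^p\equiv Y^p\pmod{p^{a+1}}$ (including the $i=p$ boundary term) is exactly that binomial-theorem argument written out.
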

\begin{lemma}\label{ld}
	The following $2$-dissection holds:
	\begin{align}
		\dfrac{1}{f_1^2}&=\dfrac{f_8^5}{f_2^5f_{16}^2}+2q\dfrac{f_4^2f_{16}^2}{f_2^5f_8} \label{edf2}.	
	\end{align}
\end{lemma}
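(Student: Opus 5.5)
This is the classical 2-dissection of $1/f_1^2$, and I would prove it by 2-dissecting $1/\phi(-q)$. First I will rewrite the left side in terms of Ramanujan's theta function. Replacing $q$ by $-q$ in the product $\phi(q)=f_2^5/(f_1^2f_4^2)$ from Theorem~\ref{6t1} and using Lemma~\ref{6l1} gives
\begin{align*}
\phi(-q)=\frac{f_1^2}{f_2},
\qquad\text{hence}\qquad
\frac{1}{f_1^2}=\frac{1}{f_2\,\phi(-q)}.
\end{align*}
So it suffices to 2-dissect $1/\phi(-q)$. For this I multiply numerator and denominator by $\phi(q)$:
\begin{align*}
\frac{1}{\phi(-q)}=\frac{\phi(q)}{\phi(q)\phi(-q)}.
\end{align*}
The denominator $\phi(q)\phi(-q)=\phi(-q^2)^2$ is a function of $q^2$. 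It can be checked directly from the product forms, since $f_1^2f_4^2/f_2^5\cdot f_2^5/(f_1^2f_4^2)$ collapses appropriately, or it is the standard identity.

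The only $q$-odd contribution therefore comes from the numerator $\phi(q)$. Its 2-dissection is elementary from the series definition: splitting $n$ into even and odd values gives
\begin{align*}
\phi(q)=\phi(q^4)+2q\psi(q^8).
\end{align*}
Substituting, I obtain
\begin{align*}
\frac{1}{f_1^2}=\frac{\phi(q^4)+2q\psi(q^8)}{f_2\,\phi(-q^2)^2}.
\end{align*}

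It remains to convert to eta quotients. I will use $\phi(-q^2)=f_2^2/f_4$ and $\phi(q^4)=f_8^5/(f_4^2f_{16}^2)$, which come from Theorem~\ref{6t1}'s product with $q\to q^4$. I also need $\psi(q)=f_2^2/f_1$, which is standard; it follows from Jacobi triple product, or from $\phi(q)\psi(q^2)$-type product manipulations, so that $\psi(q^8)=f_{16}^2/f_8$. The even part then becomes
\begin{align*}
\frac{f_8^5}{f_4^2f_{16}^2}\cdot\frac{f_4^2}{f_2\,f_2^4}=\frac{f_8^5}{f_2^5f_{16}^2},
\end{align*}
and the odd part becomes
\begin{align*}
2q\,\frac{f_{16}^2}{f_8}\cdot\frac{f_4^2}{f_2^5},
\end{align*}
exactly as in \eqref{edf2}.

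The only step needing care is justifying $\phi(q)\phi(-q)=\phi(-q^2)^2$ and the product form of $\psi$. I would verify the former purely from the eta quotients: $f_1^2f_4^2\to$ under $q\to -q$, $f_1\mapsto f_2^3/(f_1f_4)$, and the rest is bookkeeping. I would cite the product formula for $\psi$ as a standard consequence of the Jacobi triple product.
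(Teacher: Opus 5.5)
Your argument is correct. The paper gives no derivation: it simply identifies \eqref{edf2} with Hirschhorn's equation (1.9.4). You instead prove it. You write $1/f_1^2=1/(f_2\,\phi(-q))$, rationalize as $1/\phi(-q)=\phi(q)/\phi(-q^2)^2$, and then use the elementary even/odd split $\phi(q)=\phi(q^4)+2q\psi(q^8)$. This is essentially the classical derivation behind the cited formula. It makes the lemma self-contained, apart from the product formulas $\phi(q)=f_2^5/(f_1^2f_4^2)$ and $\psi(q)=f_2^2/f_1$, which the paper already quotes in Theorem~\ref{6t1} and \eqref{epsi}. The citation is shorter; your version shows the dissection is nothing more than the parity split of the squares in $\phi(q)$.

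All of your eta-quotient bookkeeping checks out, including $\phi(-q)=f_1^2/f_2$, $\phi(-q^2)=f_2^2/f_4$, $\phi(q^4)=f_8^5/(f_4^2f_{16}^2)$ and $\psi(q^8)=f_{16}^2/f_8$.

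One small repair is needed. Your first justification of $\phi(q)\phi(-q)=\phi(-q^2)^2$ is garbled: the product you write, $\frac{f_1^2f_4^2}{f_2^5}\cdot\frac{f_2^5}{f_1^2f_4^2}$, is just $1$. The intended one-line check uses the formula $\phi(-q)=f_1^2/f_2$ that you already derived:
\[
\phi(q)\phi(-q)=\frac{f_2^5}{f_1^2f_4^2}\cdot\frac{f_1^2}{f_2}=\frac{f_2^4}{f_4^2}=\phi(-q^2)^2 .
\]
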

\begin{proof}
	Equation $\eqref{edf2}$ is same as {\cite[Eq. 1.9.4]{poq}}.
\end{proof}
\begin{lemma}\label{6l3d}
	We have
	\begin{align}
		\psi(q)&=\frac{f_2^2}{f_1}\label{epsi}\\
		\psi(-q)&=\dfrac{f_1f_4}{f_2}\label{epsim}\\
		P(q)&=\dfrac{f_2f_3^2}{f_1f_6}\label{ep}\\
		P(-q)&=\dfrac{f_1f_4f_6^5}{f_2^2f_3^2f_{12}^2}\label{epm}\\
		\psi(q)&=P(q^3)+q\psi(q^9)\label{epsip}\\
		\dfrac{1}{\psi(q)}&=\dfrac{\psi(q^9)}{\psi(q^3)^4}\left(P(q^3)^2-qP(q^3)\psi(q^9)+q^2\psi(q^9)^2\right)\label{epsir}\\
		F(q)&=F(q^9)\left(X(q^3)^{-1}-q-2q^2X(q^3)\right), \label{ef}
	\end{align}
	 where $P(q)=\displaystyle\sum_{n=-\infty}^{\infty}q^{\frac{3n^2+n}{2}}$,  $F(q)=f_1f_2$ and $X(q)=\dfrac{f_1f_6^3}{f_2f_3^3}$.
\end{lemma}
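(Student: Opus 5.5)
The identities in Lemma \ref{6l3d} are standard, and I would verify them in the order listed. Each one either follows from Jacobi's triple product identity together with the substitution $q\to -q$ via Lemma \ref{6l1}, or is a $3$-dissection obtained by splitting the summation index modulo $3$. For \eqref{epsi}, I would apply the triple product to $\sum_{n\in\mathbb{Z}} q^{n(n+1)/2}=2\psi(q)$. This gives $\psi(q)=(q^2;q^2)_\infty(-q;q)_\infty^2/\ldots$, which simplifies to $f_2^2/f_1$ using $(-q;q)_\infty=f_2/f_1$.

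For \eqref{epsim}, I replace $q$ by $-q$ in \eqref{epsi}. The factor $f_2$ is unchanged, and $f_1$ becomes $(-q;-q)_\infty=f_2^3/(f_1f_4)$ by Lemma \ref{6l1}, so $\psi(-q)=f_1f_4/f_2$. For \eqref{ep}, the triple product gives
\[
P(q)=(-q;q^3)_\infty(-q^2;q^3)_\infty(q^3;q^3)_\infty=f_3\frac{(-q;q)_\infty}{(-q^3;q^3)_\infty}=f_3\cdot\frac{f_2/f_1}{f_6/f_3}=\frac{f_2f_3^2}{f_1f_6}.
\]
Then \eqref{epm} follows by substituting $q\to-q$ into \eqref{ep}. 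Here $f_2,f_6$ are fixed, $f_1\mapsto f_2^3/(f_1f_4)$, and $f_3\mapsto f_6^3/(f_3f_{12})$ (Lemma \ref{6l1} with $q\to q^3$), and collecting exponents gives $f_1f_4f_6^5/(f_2^2f_3^2f_{12}^2)$.

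For \eqref{epsip}, I write $2\psi(q)=\sum_{n\in\mathbb{Z}}q^{n(n+1)/2}$ and split according to the residue of $n$ modulo $3$. The class $n=3m$ contributes $\sum_m q^{(9m^2+3m)/2}=P(q^3)$, and the class $n=3m-1$ contributes $\sum_m q^{(9m^2-3m)/2}=P(q^3)$ as well. The class $n=3m+1$ gives $q\sum_m q^{9m(m+1)/2}=2q\psi(q^9)$. Dividing by $2$ yields \eqref{epsip}.

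For \eqref{epsir}, I set $a=P(q^3)$ and $b=q\psi(q^9)$ and use $1/(a+b)=(a^2-ab+b^2)/(a^3+b^3)$. This reduces \eqref{epsir} to the cubic identity $P(q)^3+q\psi(q^3)^3=\psi(q)^4/\psi(q^3)$, applied with $q\to q^3$. I expect this identity to be the main obstacle. I would either cite it from Hirschhorn's \emph{The Power of $q$} \cite{poq}, where it appears in the treatment of $3$-dissections, or prove it by writing both sides as eta quotients via \eqref{epsi} and \eqref{ep}. In the latter case the resulting identity is among weight-$3/2$ (after normalization) modular forms on a small level, so it can be checked by comparing finitely many coefficients up to the Sturm bound.

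Finally, for \eqref{ef}, I would multiply the known $3$-dissections of $f_1$ and $f_2$, both of which come from Euler's pentagonal series split modulo $3$. Simplifying the cross terms with the same cubic-type identities then gives $f_1f_2=f_9f_{18}\bigl(X(q^3)^{-1}-q-2q^2X(q^3)\bigr)$. This is also a recorded identity in \cite{poq}, which I would cite to avoid the routine but lengthy simplification.
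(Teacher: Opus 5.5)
Your proposal is correct. You prove from first principles most of what the paper only cites. The paper quotes \eqref{epsi}, \eqref{ep}, \eqref{epsip} and \eqref{ef} from Hirschhorn's book and \eqref{epsir} from Hirschhorn--Sellers. It derives only \eqref{epsim} and \eqref{epm}, via Lemma \ref{6l1}, exactly as you do.

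You instead obtain \eqref{epsi}, \eqref{ep} and \eqref{epsip} from the triple product and from splitting the summation index modulo $3$, and these derivations are right. (In \eqref{epsi} the triple product gives $2\psi(q)=f_1(-1;q)_\infty(-q;q)_\infty=2f_1(-q;q)_\infty^2$; your intermediate expression is garbled, but the conclusion is correct.) You then reduce \eqref{epsir} to $a^3+b^3=\psi(q^3)^4/\psi(q^9)$, where $a=P(q^3)$ and $b=q\psi(q^9)$. This buys a self-contained argument at the cost of one extra identity.

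That identity, which you flag as the main obstacle, needs neither a citation nor a Sturm-bound check. Let $\omega=e^{2\pi i/3}$. By \eqref{epsip}, $\psi(\omega^jq)=a+\omega^jb$, so $a^3+b^3=\prod_{j=0}^{2}\psi(\omega^jq)$. Moreover $\prod_{j=0}^{2}(\omega^jq;\omega^jq)_\infty=f_3^4/f_9$, and the analogous product for $f_2$ is $f_6^4/f_{18}$. Then \eqref{epsi} gives $\prod_j\psi(\omega^jq)=f_6^8f_9/(f_3^4f_{18}^2)=\psi(q^3)^4/\psi(q^9)$, as required.

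For \eqref{ef}, your sketch of multiplying the $3$-dissections of $f_1$ and $f_2$ would need further theta-function identities, not the cubic one, to collapse the cross terms. As written, it is the citation that carries this part. A short self-contained proof uses $f_1f_2=\psi(q)\phi(-q)$ together with \eqref{epsip} and $\phi(-q)=\frac{f_9^2}{f_{18}}-2q\frac{f_3f_{18}^2}{f_6f_9}$; the latter follows by splitting $\sum(-1)^nq^{n^2}$ modulo $3$ and applying the triple product. Multiplying out gives $\frac{f_6f_9^4}{f_3f_{18}^2}-qf_9f_{18}-2q^2\frac{f_3f_{18}^4}{f_6f_9^2}$, which is \eqref{ef}.
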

\begin{proof}
Equation \eqref{epsi} is same as the equation of Hirschhorn \cite[ (1.5.7)]{poq}. For equations \eqref{ep}, \eqref{epsip} and \eqref{ef} see \cite[(14.3.3),(26.1.2) and (14.4.5)]{poq}. Equations \eqref{epsim} and \eqref{epm} are obtained by Lemma \ref{6l1}. For equation \eqref{epsir} see Hirschhorn and Sellers \cite[Lemma 2.2]{hs1}.
\end{proof}

	We now end this section with necessary definitions and facts on arithmetic properties of integer weight modular forms to keep the paper self-contained. Let $\mathbb{H}$ denote the complex upper-half plane. For a positive integer $k$, the complex vector space of modular forms of weight $k$ with respect to congruence subgroup $\Gamma$ will be denoted by $M_k(\Gamma)$ (for more details see \cite{wom}).
	\begin{definition}[{{\cite[Definition~1.15]{wom}}}]
	Let $\chi$ be a Dirichlet character modulo $N$. Then a modular form $f\in M_k(\Gamma_1(N))$ has Nebentypus character $\chi$ if
	\begin{align*}
		f\left(\dfrac{az+b}{cz+d}\right)=\chi(d)(cz+d)^kf(z),
	\end{align*} 
	for all $z\in \mathbb{H}$ and all $\begin{bmatrix}
		a & b \\
		c & d
	\end{bmatrix}\in\Gamma_0(N)$. We denote the space of such modular forms by $M_k(\Gamma_0(N),\chi)$.
	\end{definition}

\indent The Dedekind's eta-function $\eta(z)$ defined by
\begin{align}\label{2.1}
	\eta(z):=q^{1/24}(q;q)_\infty=q^{1/24}\prod_{n=1}^\infty(1-q^{n}), \text{ where } q=e^{2\pi iz},
\end{align}
which is a non-vanishing holomorphic function on $\mathbb{H}$. A function is called an eta-quotient if it is of the form
\begin{align}
	f(z)=\prod_{\delta\mid N}\eta(\delta z)^{r_\delta},
\end{align}
where $N$ is a positive integer and $r_{\delta}$ is an integer.\\ 
\indent If $f(z)$ is an eta-quotient associated with positive integer weight $k$ satisfying the conditions of the following theorem and is holomorphic at all the cusps of $\Gamma_0(N)$, then $f(z) \in M_k(\Gamma_0(N), \chi )$. 
\begin{theorem}[{{\cite[Theorem~1.64]{wom}}}]\label{6t2.1}
	If $ f(z)= \prod_{\delta \mid N} \eta(\delta z)^{r_\delta}$ is an eta-quotient with
	$k= \frac{1}{2} \sum_{\delta \mid N}{r_\delta} \in \mathbb{Z}$, and satisfies the following additional properties:
	\begin{align}\label{2.3}
		\sum_{\delta\mid N} \delta {r_\delta} \equiv 0 \pmod {24},
	\end{align}
	\begin{align}\label{2.4}
		\sum_{\delta \mid N} \frac{N}{\delta}  {r_\delta} \equiv 0 \pmod {24},
	\end{align}
	then $f(z)$ satisfies 
	\begin{align*}
		f\left(\dfrac{az+b}{cz+d}\right)=\chi(d)(cz+d)^kf(z)
	\end{align*} 
	for every $\begin{bmatrix}
		a & b \\
		c & d
	\end{bmatrix}\in \Gamma_0(N)$, where the character $\chi$
	is defined by
	$\chi (d) := \bigg( \frac{(-1)^k \prod_{\delta \mid N} \delta^{r_\delta}}{d} \bigg).$
\end{theorem}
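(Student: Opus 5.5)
The statement is the classical criterion of Gordon--Hughes--Newman, quoted from \cite{wom}. I would prove it by reducing to the transformation law of $\eta$ under $\mathrm{SL}_2(\mathbb{Z})$ and then tracking the multipliers.

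Fix $\gamma=\begin{bmatrix} a & b\\ c& d\end{bmatrix}\in\Gamma_0(N)$. For each $\delta\mid N$ put
\[
\gamma_\delta=\begin{bmatrix} a & \delta b\\ c/\delta & d\end{bmatrix}.
\]
This lies in $\mathrm{SL}_2(\mathbb{Z})$ because $\delta\mid N\mid c$, and it satisfies $\delta\,\gamma z=\gamma_\delta(\delta z)$. By the eta transformation formula,
\[
\eta(\gamma_\delta(\delta z))=\varepsilon(\gamma_\delta)\,\bigl(\tfrac{c}{\delta}\delta z+d\bigr)^{1/2}\eta(\delta z)=\varepsilon(\gamma_\delta)(cz+d)^{1/2}\eta(\delta z),
\]
where $\varepsilon$ is a $24$th root of unity. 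Raising to $r_\delta$ and taking the product gives
\[
f(\gamma z)=\Bigl(\prod_{\delta\mid N}\varepsilon(\gamma_\delta)^{r_\delta}\Bigr)(cz+d)^{k}f(z),
\]
with $k=\tfrac12\sum_\delta r_\delta\in\mathbb{Z}$. The branch of $(cz+d)^{1/2}$ is consistent across all $\delta$, since the factor $(cz+d)$ is the same each time. So everything reduces to showing
\[
\prod_{\delta}\varepsilon(\gamma_\delta)^{r_\delta}=\chi(d).
\]

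For this I would use Petersson's explicit formula for the multiplier. When $c>0$ and $d$ is odd,
\[
\varepsilon(\gamma)=\Bigl(\tfrac{c}{d}\Bigr)\exp\Bigl(\tfrac{\pi i}{12}\bigl[(a+d)c-bd(c^2-1)-3c\bigr]\Bigr),
\]
with an analogous formula when $c$ is odd. The cases $c=0$ and $c<0$ follow by replacing $\gamma$ with $-\gamma$ and using $\gamma=\pm\begin{bmatrix}1&b\\0&1\end{bmatrix}$. Substituting $c/\delta$ and $\delta b$ for $c$ and $b$, the exponent becomes
\[
\sum_\delta r_\delta\Bigl[(a+d)\tfrac{c}{\delta}-bd\,\delta\bigl(\tfrac{c^2}{\delta^2}-1\bigr)-\tfrac{3c}{\delta}\Bigr].
\]
This expands into terms weighted by $\sum_\delta \delta r_\delta$ and by $\sum_\delta (N/\delta)r_\delta\cdot(c/N)$. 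The hypotheses \eqref{2.3} and \eqref{2.4} make each of these sums divisible by $24$, so the exponential contributes only a factor that can be absorbed using $ad-bc=1$. Here one uses that $d^2\equiv1\pmod{24}$ whenever $\gcd(d,6)=1$, and handles separately the cases where $d$ shares a factor with $6$.

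What remains is the product of Jacobi symbols $\prod_\delta\bigl(\tfrac{c/\delta}{d}\bigr)^{r_\delta}$. By quadratic reciprocity and $\sum_\delta r_\delta=2k$, this should equal $\bigl(\tfrac{(-1)^k\prod_\delta\delta^{r_\delta}}{d}\bigr)$. The mechanism is that the powers of $c$ pair up into $(c/d)^{2k}=1$. The sign $(-1)^k$ arises from the supplementary laws when $d$ is replaced by $|d|$ or the sign of $c$ is normalized.

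The main obstacle is this last bookkeeping step: the interplay between the Jacobi-symbol factors, the sign and parity cases ($d$ even versus $c$ odd), and the roots of unity. I would first treat $c>0$ with $\gcd(d,6)=1$ and $d$ odd, where the formula is cleanest. I would then deduce the remaining cases by multiplying $\gamma$ by $-I$ or by translations $\begin{bmatrix}1&1\\0&1\end{bmatrix}$, which change $d$ by multiples of $c$. Both sides are invariant under such translations: for $\chi$ because $N\mid c$, and for $f$ by \eqref{2.3}, which makes $f$ invariant under $z\mapsto z+1$.
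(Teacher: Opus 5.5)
The paper gives no proof of this statement; it only quotes it from Ono's book. Your reduction (writing $\delta\gamma z=\gamma_\delta(\delta z)$ and using one common branch of $(cz+d)^{1/2}$) is the standard Gordon--Hughes--Newman argument and is fine. The gap is in the multiplier formula, and the whole theorem rests on it.

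For $c>0$ and $d$ odd, Petersson's formula is
\[
\varepsilon(\gamma)=\left(\frac{c}{|d|}\right)\exp\left(\frac{\pi i}{12}\bigl[(a+d)c-bd(c^2-1)+3d-3-3cd\bigr]\right).
\]
The exponent ending in $-3c$ that you wrote belongs to the formula for $c$ odd, whose symbol is $\left(\frac{d}{c}\right)$, not $\left(\frac{c}{d}\right)$. For example, $\begin{bmatrix}1&1\\2&3\end{bmatrix}$ has multiplier $e^{-\pi i/12}$, but your formula gives $e^{5\pi i/12}$.

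This breaks the argument, not just the constants. With your exponent,
\[
\sum_\delta r_\delta[\cdots]=\frac{c}{N}\Bigl(\sum_\delta\frac{N}{\delta}r_\delta\Bigr)(a+d-bcd-3)+bd\sum_\delta\delta r_\delta\equiv 0\pmod{24},
\]
so the exponential is $1$. Writing $s=\prod_\delta\delta^{r_\delta}$, the Jacobi symbols then give $\left(\frac{c}{|d|}\right)^{2k}\left(\frac{s}{|d|}\right)=\left(\frac{s}{|d|}\right)$, and the factor $(-1)^k$ never appears. Your proposed source for it, supplementary laws when the signs of $c$ or $d$ are normalized, cannot help: for $c,d>0$ nothing is normalized, yet $\chi(d)$ contains $\left(\frac{-1}{d}\right)^k$.

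Concretely, take $\eta^6(4z)$, which the proof of Theorem~\ref{6thec1} uses with character $\left(\frac{-1}{\cdot}\right)$, and $\gamma=\begin{bmatrix}11&2\\16&3\end{bmatrix}\in\Gamma_0(16)$. Your formula predicts multiplier $+1$; the correct value is $-1$.

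With the correct formula the bookkeeping you call the main obstacle is short. This formula is valid for every $c>0$ with $d$ odd, whatever the parity of $c$; that matters because $c/\delta$ can be odd when $c$ is even. Substituting $c/\delta$ and $\delta b$ and summing gives
\[
\sum_\delta r_\delta\bigl[\cdots\bigr]=\frac{c}{N}\Bigl(\sum_\delta\frac{N}{\delta}r_\delta\Bigr)(a-2d-bcd)+bd\sum_\delta\delta r_\delta+6k(d-1)\equiv 6k(d-1)\pmod{24}
\]
by \eqref{2.4} and \eqref{2.3}. So the exponential equals $(-1)^{k(d-1)/2}$. Multiplied by $\left(\frac{s}{|d|}\right)$, this is exactly $\left(\frac{(-1)^ks}{d}\right)$ for $d$ of either sign, using the Kronecker convention $\left(\frac{x}{-1}\right)=\mathrm{sign}(x)$. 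Thus $(-1)^k$ comes from the $3d-3$ term. Neither $d^2\equiv1\pmod{24}$ nor a separate treatment of $\gcd(d,6)>1$ is needed.

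A smaller point concerns your translation step for even $d$. The claim $\chi(d+mc)=\chi(d)$ because $N\mid c$ presupposes that the Kronecker symbol $\chi$ is periodic modulo $N$, which is not automatic. It does hold here, but needs a line. An even $d$ forces $c$ odd, hence $N$ odd. Then \eqref{2.3} read modulo $4$ gives $k\equiv\sum_{\delta\equiv 3\ (\mathrm{mod}\ 4)}r_\delta\pmod 2$, that is, $(-1)^ks\equiv1\pmod 4$. So $\chi$ is a character of conductor dividing $N$.
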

To check the holomorphicity of $f(z)$ at cusps of $\Gamma_0(N)$, it is enough to check that the orders at the cusps are non-negative. The next theorem helps to evaluate orders of an eta-quotient at each cusps.
\begin{theorem}[{{\cite[Theorem~1.65]{wom}}}]\label{6t2.2}
	Let $c,d,$ and $N$ be positive integers with $d\mid N$ and $gcd(c,d)=1$. If $f(z)$ is an eta-quotient satisfying the conditions of Theorem \ref{t2.1} for $N$, then the order of vanishing of $f(z)$ at the cusp $\frac{c}{d}$ is 
	\begin{align*}
		\dfrac{N}{24}\sum_{\delta\mid N} \frac{gcd(d,\delta)^2 r_\delta}{ gcd(d,\frac{N}{d})d\delta}.		
	\end{align*}
\end{theorem}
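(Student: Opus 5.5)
The statement is a standard result quoted from \cite{wom}, so the plan is to reproduce the classical Ligozat argument. We compute the leading exponent of each factor $\eta(\delta z)$ after moving the cusp $\frac{c}{d}$ to $\infty$, and then rescale by the width of the cusp in $\Gamma_0(N)$.

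\emph{Moving the cusp.} Since $\gcd(c,d)=1$, choose $b,e\in\mathbb{Z}$ with $ce-bd=1$ and put $\gamma=\begin{bmatrix} c & b\\ d & e\end{bmatrix}\in SL_2(\mathbb{Z})$, so that $\gamma\infty=\frac{c}{d}$. The order of $f$ at $\frac{c}{d}$ is, by definition, the exponent of the leading term of $(dz+e)^{-k}f(\gamma z)$. This exponent is measured in the local uniformizer $q^{1/h}$, where $h$ is the width of the cusp $\frac{c}{d}$ in $\Gamma_0(N)$. Because $f(\gamma z)=\prod_{\delta\mid N}\eta(\delta\gamma z)^{r_\delta}$, it suffices to find the leading exponent, in $q$, of each $\eta(\delta\gamma z)$ and then sum with the weights $r_\delta$.

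\emph{One factor $\eta(\delta\gamma z)$.} Write
\[
\begin{bmatrix} \delta & 0\\ 0 & 1\end{bmatrix}\gamma=\begin{bmatrix} \delta c & \delta b\\ d & e\end{bmatrix}=M\begin{bmatrix} A & B\\ 0 & D\end{bmatrix}
\]
with $M\in SL_2(\mathbb{Z})$, $AD=\delta$ and $0\le B<D$. This is the Hermite normal form of the integer matrix on the left, which has determinant $\delta$.

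The first column of the left-hand matrix is $(\delta c,d)^{T}$. Since $\gcd(c,d)=1$, the gcd of this column is $\gcd(\delta,d)$, so $A=\gcd(d,\delta)$ and $D=\delta/\gcd(d,\delta)$.

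The eta transformation law under $M$ gives
\[
\eta(\delta\gamma z)=\varepsilon\,(\text{automorphy factor})^{1/2}\,\eta\!\left(\tfrac{Az+B}{D}\right),
\]
where $\varepsilon$ is a root of unity and the automorphy factor is holomorphic and nonvanishing on $\mathbb{H}$. These factors therefore do not affect the order. The series $\eta\left(\frac{Az+B}{D}\right)$ begins with $q^{A/(24D)}$, and
\[
\frac{A}{24D}=\frac{\gcd(d,\delta)^2}{24\delta}.
\]
Summing over $\delta\mid N$, the leading exponent of $f(\gamma z)$ in $q$ is $\sum_{\delta\mid N}\frac{\gcd(d,\delta)^2 r_\delta}{24\delta}$.

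\emph{Rescaling by the width.} Next we compute the width $h$ of $\frac{c}{d}$ in $\Gamma_0(N)$. This is the least $h>0$ with
\[
\gamma\begin{bmatrix}1&h\\0&1\end{bmatrix}\gamma^{-1}\in\pm\Gamma_0(N).
\]
The lower-left entry of this conjugate is $-d^2h$, so $h$ is the least positive integer with $N\mid d^2h$. Hence
\[
h=\frac{N}{\gcd(d^2,N)}=\frac{N}{d\,\gcd(d,N/d)},
\]
where the last equality uses $d\mid N$.

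Multiplying the $q$-exponent by $h$ converts it to the exponent in $q^{1/h}$. This gives
\[
\frac{N}{24}\sum_{\delta\mid N}\frac{\gcd(d,\delta)^2 r_\delta}{\gcd(d,\frac{N}{d})\,d\,\delta},
\]
which is the stated formula. The conditions of Theorem \ref{6t2.1} guarantee that $f$ transforms with a character on $\Gamma_0(N)$, so this order is well defined independently of the choice of $\gamma$.

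\emph{Main obstacle.} The key steps are the Hermite decomposition and, especially, the width computation. One must check that $\gcd(d^2,N)=d\gcd(d,N/d)$ whenever $d\mid N$, and keep track of the sign convention $\pm\Gamma_0(N)$ so that irregular cusps do not change the exponent. Everything else is bookkeeping with the eta transformation law.
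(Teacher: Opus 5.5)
The paper gives no proof of this statement. It is quoted from \cite{wom}, where it is Ligozat's formula, so there is no argument in the paper to compare yours against. Your reconstruction is the standard Ligozat computation, and it is correct. The Hermite decomposition gives $A=\gcd(\delta c,d)=\gcd(d,\delta)$, so $\eta(\delta\gamma z)$ contributes the $q$-exponent $\gcd(d,\delta)^2/(24\delta)$. The width of $c/d$ in $\Gamma_0(N)$ is $N/\gcd(d^2,N)=N/(d\gcd(d,N/d))$. Your concern about irregular cusps is moot, because $-I\in\Gamma_0(N)$.

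One justification needs repair. A factor being ``holomorphic and nonvanishing on $\mathbb{H}$'' does not by itself leave the order at a cusp unchanged; $e^{2\pi i z}$ is such a factor and shifts the order at $\infty$ by $1$. What you actually need is the explicit value of the automorphy factor. The bottom row of $M=\bigl(\begin{smallmatrix}\delta c&\delta b\\ d&e\end{smallmatrix}\bigr)\bigl(\begin{smallmatrix}A&B\\ 0&D\end{smallmatrix}\bigr)^{-1}$ is $\bigl(d/A,\,(eA-dB)/\delta\bigr)$, hence $j\bigl(M,\tfrac{Az+B}{D}\bigr)=\tfrac{dz+e}{D}$. So the automorphy factors multiply to a constant times $(dz+e)^{k}$, where $k=\tfrac12\sum_\delta r_\delta$. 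This cancels the $(dz+e)^{-k}$ exactly and leaves $(dz+e)^{-k}f(\gamma z)=C\prod_{\delta}\eta\bigl(\tfrac{A_\delta z+B_\delta}{D_\delta}\bigr)^{r_\delta}$, whose leading exponent is the sum you computed.

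Finally, with a nontrivial character, $f|_k\gamma$ is $h$-periodic only up to the factor $\chi(1+cdh)$. So the ``order'' in the theorem need not be an integer: for example, $\eta(2z)^{10}/(\eta(z)^4\eta(4z)^4)$ has order $\tfrac12$ at the cusp $\tfrac12$ of $\Gamma_0(4)$. The order is by definition $h$ times the leading $q$-exponent, which is exactly what you computed.
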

We recall the following definitions of the Hecke operator and Hecke eigenform which are crucial for our proofs.
\begin{definition}[{{\cite[Definition~2.1]{wom}}}]
	Let $m$ be a positive integer and $f(z) = \sum_{n=0}^ \infty a(n)q^n \in  M_k(\Gamma_0(N), \chi ).$ The Hecke operator $T_m$ acts on $f(z)$ by 
	\begin{align}
		f(z)\mid {T_m} := \sum_{n=0}^\infty \bigg( \sum_{d\mid gcd(n,m)} \chi (d) d^{k-1}a \bigg(\frac{nm}{d^2} \bigg) \bigg)q^n.
	\end{align}
	In particular, if $m=p$ is a prime, then 
	\begin{align}\label{2.7}
		f(z)\mid {T_p} := \sum_{n=0}^\infty \bigg( a(pn)+ \chi (p) p^{k-1}a \bigg(\frac{n}{p} \bigg) \bigg)q^n.
	\end{align}
	We adopt the convention that $a(n/p)=0$ whenever $p\nmid n$.
\end{definition}
\begin{definition}[{{\cite[Definition~2.5]{wom}}}] 
	A modular form $f(z)\in M_k(\Gamma_0(N), \chi)$ is called a Hecke eigenform if for every $m\ge 2$ there is a complex number $\lambda(m)$ for which
	\begin{align}\label{dhe}
		f(z)\mid T_m=\lambda(m)f(z).
	\end{align}
\end{definition}
 We note the following theorem of Serre which will be useful to study the divisibility properties of $\bar{b}_{r,s}^k(n)$.
\begin{theorem}[{{\cite[Theorem~2.65]{wom}}}]\label{6tdivi}
Let $k$ and $m$ be positive integers. If $f(z)\in M_k(\Gamma_0(N),\chi(\bullet))$ has the Fourier expansion $f(z)=\sum_{n=0}^{\infty}a(n)q^n\in\mathbb{Z}[[q]]$, then there is a constant $\alpha>0$ such that
\begin{align*}
\#\{n\le X: c(n)\not\equiv 0\pmod{m}\}=O\left(\dfrac{X}{(log X)^\alpha}\right).
\end{align*}
\end{theorem}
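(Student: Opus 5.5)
Since this is Serre's theorem, quoted from \cite{wom}, I would follow Serre's original strategy: first show that the Hecke operators $T_p$ annihilate $f$ modulo $m$ for a set of primes $p$ of positive density, then count the integers $n$ that avoid all such primes in the right way. Throughout, $c(n)$ in the statement is read as the coefficient $a(n)$ of $f$.

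\textbf{Reductions.} By the Chinese remainder theorem it suffices to treat $m=\ell^j$ with $\ell$ prime: the exceptional sets for the prime-power factors of $m$ have union of size $O(X/(\log X)^{\alpha_{\min}})$. Since $M_k(\Gamma_0(N),\chi)$ has a basis with integral (algebraic) coefficients stable under all $T_p$, I would work in the finite $\mathbb{Z}/\ell^j$-module
\begin{align*}
\overline{M}:=M_k(\Gamma_0(N),\chi)\otimes\mathcal{O}/\ell^j,
\end{align*}
with $\mathcal{O}$ a suitable ring of integers, and let $\mathbb{T}$ be the (finite) Hecke algebra acting on $\overline{M}$. By Deligne (and Deligne--Serre in weight $1$) there is a continuous representation $\rho:\mathrm{Gal}(\overline{\mathbb{Q}}/\mathbb{Q})\to GL_2(\mathbb{T})$, unramified outside $N\ell$, with
\begin{align*}
\operatorname{tr}\rho(\mathrm{Frob}_p)=T_p,\qquad \det\rho(\mathrm{Frob}_p)=\chi(p)p^{k-1}.
\end{align*}
The image of $\rho$ is a finite group $G$.

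\textbf{Step 1 (positive density of annihilating primes).} Let $c\in G$ be the image of complex conjugation. Then $\operatorname{tr}\rho(c)=0$ and $\det\rho(c)=-1$; for $\ell=2$ one replaces $c$ by the identity-type element giving trace $\equiv 0$, using that $\operatorname{tr}(\mathrm{Id})=2$ in the suitable power argument. By Chebotarev, the set $P$ of primes $p\nmid N\ell$ with $\mathrm{Frob}_p$ conjugate to $c$ has density $\alpha=\#\mathrm{cl}(c)/\#G>0$. For $p\in P$ we get
\begin{align*}
f\mid T_p\equiv 0 \pmod{\ell^j}.
\end{align*}
By \eqref{2.7} this says $a(pn)+\chi(p)p^{k-1}a(n/p)\equiv0$. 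Taking $n$ coprime to $p$ kills the second term, so $a(pn)\equiv0\pmod{\ell^j}$. Hence $a(n)\equiv 0$ whenever some $p\in P$ divides $n$ exactly once.

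\textbf{Step 2 (counting).} It remains to bound
\begin{align*}
\#\{n\le X:\ \text{no } p\in P \text{ satisfies } p\,\|\,n\}.
\end{align*}
Such $n$ factor as $n=ab$, where $a$ is free of primes in $P$ and $b$ is squarefull supported on $P$. Integers free of a set of primes of Dirichlet density $\alpha$ number $O(X/(\log X)^{\alpha})$ by a Landau--Wirsing type estimate. Summing over squarefull $b$ uses $\sum_b 1/b<\infty$ and keeps the same order, which gives $O(X/(\log X)^{\alpha})$.

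\textbf{Main obstacle.} I expect the hard part to be Step 1: attaching Galois representations to forms modulo $\ell^j$ rather than to individual eigenforms, so that the trace identity holds for the whole Hecke algebra and the argument does not require $f$ to be an eigenform. I would handle this by applying Chebotarev to the product of the (finitely many) $\ell$-adic representations of the eigenforms spanning the space. I would then check that complex conjugation forces $T_p\equiv0$ simultaneously for all of them, including the $\ell=2$ subtlety where trace zero must be arranged via $\rho(c)$ of order $2$.
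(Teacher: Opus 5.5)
The paper does not prove this statement; it is quoted from Ono's book, going back to Serre. There it is obtained along your lines: a positive proportion of primes $p\equiv -1\pmod{Nm}$ satisfy $f\mid T_p\equiv 0\pmod m$, so $a(n)\equiv 0$ whenever such a $p$ divides $n$ exactly once, and the remaining $n$ are counted. Your strategy is therefore the standard one, but three points need repair.

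\textbf{The Galois representation.} Deligne attaches $\lambda$-adic representations to individual eigenforms. He does not give a representation $G_{\mathbb{Q}}\to GL_2(\mathbb{T})$ for the whole Hecke algebra modulo $\ell^j$; in general one only has a pseudo-representation. So your fallback has to be the actual argument, and it should be set up as follows.
\begin{enumerate}
\item Choose a basis $g_i$ of $M_k(\Gamma_0(N),\chi)$ consisting of $T_p$-eigenforms for $p\nmid N$: the $g(dz)$ with $g$ a newform, together with Eisenstein series.
\item Write $f=\sum_i c_ig_i$ and fix $v$ with every $\ell^v c_i$ integral at $\ell$.
\item Apply Chebotarev to complex conjugation in the compositum of the fields cut out by the $\rho_{g_i}\bmod \ell^{j+v}$ and $\mathbb{Q}(\zeta_{N\ell^{j+v}})$.
\end{enumerate}
The cyclotomic factor forces $p\equiv -1\pmod{N\ell^{j+v}}$. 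This is what kills the Eisenstein eigenvalues, since $\psi_1(p)+\psi_2(p)p^{k-1}\equiv\psi_1(-1)\bigl(1+\chi(-1)(-1)^{k-1}\bigr)=0$. The extra $\ell^{v}$ absorbs the denominators of the $c_i$.

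\textbf{The case $\ell=2$.} Your $\ell=2$ detour is wrong: the identity has trace $2\not\equiv 0\pmod 4$. It is also unnecessary. From $\rho(c)^2=\mathrm{Id}$ and $\det\rho(c)=\chi(-1)(-1)^{k-1}=-1$, Cayley--Hamilton gives $\operatorname{tr}\rho(c)\,\rho(c)=\rho(c)^2+\det\rho(c)\,\mathrm{Id}=0$. Since $\rho(c)$ is invertible, $\operatorname{tr}\rho(c)=0$ over any coefficient ring, including $\ell=2$.

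\textbf{The counting in Step 2.} The sum over squarefull $b$ must be split at $\sqrt X$:
\begin{enumerate}
\item For $b\le\sqrt X$, use $\log(X/b)\ge\frac12\log X$ together with $\sum_b 1/b<\infty$.
\item For $b>\sqrt X$, the contribution is trivially $O(X^{3/4})$.
\end{enumerate}
Also, the Landau-type bound should be fed the natural density that Chebotarev supplies, not merely Dirichlet density. You may shrink $\alpha$ slightly to avoid error terms, which the statement permits since it only asks for some $\alpha>0$.
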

We also recall a result due to Cotron, Michaelsen, Stamm, and Zhu \cite{cotron}. We define
\begin{align}\label{6gtau}
	G(\tau):=\frac{\eta(\delta_1\tau)^{r_1}\eta(\delta_2\tau)^{r_2}\cdots\eta(\delta_u\tau)^{r_u}}{\eta(\gamma_1\tau)^{s_1}\eta(\gamma_2\tau)^{s_2}\cdots\eta(\gamma_t\tau)^{s_t}},
\end{align}
where $r_i$, $s_i$, $\delta_i$, and $\gamma_i$ are positive integers with $\delta_1$, \dots, $\delta_u$, $\gamma_1$, \dots, $\gamma_t$ distinct and $u,t$$\ge$ 0. The weight of $G(\tau)$ is given by
\begin{align*}
	\frac{1}{2}\left(\sum_{i=1}^{u}r_i -\sum_{i=1}^{t}s_i\right).
\end{align*}
Also define $D_G:=gcd(\delta_1, \delta_2, \dots, \delta_u)$. Then, we have the following result.
\begin{theorem}[{{\cite[Theorem~1.1]{cotron}}}]\label{6tlacu2}
	Suppose $G(\tau)$ is an eta-quotient of the form \eqref{6gtau} with integer weight. If $p$ is a prime such that $p^a$ divides $D_G$ and 
	\begin{align*}
		p^a\ge \sqrt{\frac{\sum_{i=1}^{t}\gamma_is_i}{\sum_{i=1}^{u}\frac{r_i}{\delta_i}}},
	\end{align*}
	then $G(\tau)$ is lacunary modulo $p^j$ for any positive integer $j$. Moreover, there exists a positive constant $\alpha$, depending on $p$ and $j$, such that the number of integers $n\le X$ with $p^j$ not dividing $b(n)$ is $O\left(\dfrac{X}{log^\alpha X}\right)$. 
\end{theorem}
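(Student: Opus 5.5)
The statement is a cited result (Cotron, Michaelsen, Stamm and Zhu), but here is how I would prove it. The idea is to multiply $G(\tau)$ by an auxiliary eta-quotient that is $\equiv 1 \pmod{p^j}$ and that cancels all poles of $G$ at the cusps. The product is then a holomorphic integral-weight modular form whose coefficients agree with those of $G$ modulo $p^j$, and Theorem \ref{6tdivi} (Serre) gives the density statement directly.

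\textbf{Auxiliary factor.} Set
\begin{align*}
A(\tau):=\frac{\eta(\tau)^{p^a}}{\eta(p^a\tau)}.
\end{align*}
By Lemma \ref{bt}, applied repeatedly, $(q;q)_\infty^{p^a}\equiv (q^{p^a};q^{p^a})_\infty \pmod p$, so $A^{p^{j-1}}\equiv 1\pmod{p^j}$ as $q$-series, up to the harmless power of $q$ from the $q^{1/24}$ factors. For a large integer $M$, consider
\begin{align*}
F(\tau):=G(\tau)\,A(\tau)^{p^{j-1}M}.
\end{align*}
To make the weight and the conditions \eqref{2.3} and \eqref{2.4} of Theorem \ref{6t2.1} hold, I would first replace $\tau$ by $24\tau$ and take $M$ suitably divisible. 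This only rescales the exponents of $q$ and does not affect densities. We then have $F\equiv G(24\tau)\pmod{p^j}$ up to a shift in $q$, with $F$ on $\Gamma_0(N)$ for $N=24\cdot\mathrm{lcm}(p^a,\delta_i,\gamma_i)$ and some quadratic character.

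\textbf{Cusp orders.} Using Theorem \ref{6t2.2}, at a cusp $c/d$ the order of $A$ is a positive multiple of
\begin{align*}
\frac{p^a}{d}-\frac{\gcd(d,p^a)^2}{p^a d}.
\end{align*}
This is positive when $p^a\nmid d$ and zero when $p^a\mid d$. So for $p^a\nmid d$, taking $M$ large makes $F$ holomorphic there, whatever the order of $G$. The remaining cusps are those with $p^a\mid d$. There every $\delta_i$ satisfies $\gcd(d,\delta_i)\ge p^a$, so the numerator of $G$ contributes at least
\begin{align*}
\frac{p^{2a}}{d}\sum_i\frac{r_i}{\delta_i},
\end{align*}
up to the common positive normalisation. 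The denominator contributes at most
\begin{align*}
\sum_i\frac{\gcd(d,\gamma_i)^2 s_i}{d\,\gamma_i}\le \frac1d\sum_i\gamma_i s_i.
\end{align*}
The hypothesis $p^{2a}\ge \sum\gamma_is_i/\sum r_i/\delta_i$ is exactly what makes the order of $G$, and hence of $F$, nonnegative at these cusps.

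\textbf{Conclusion and main obstacle.} Thus $F\in M_{k'}(\Gamma_0(N),\chi)$ has integer coefficients, and Theorem \ref{6tdivi} yields
\begin{align*}
\#\{n\le X:\, b(n)\not\equiv0 \pmod{p^j}\}=O\!\left(\frac{X}{(\log X)^\alpha}\right),
\end{align*}
so $G$ is lacunary modulo $p^j$. I expect the main obstacle to be the bookkeeping in the cusp analysis. One must check the uniform lower bound $\gcd(d,\delta_i)^2/\delta_i\ge p^{2a}/\delta_i$ correctly, including the $\gcd(d,N/d)$ normalisation, which is common to all terms. One must also ensure $M$ can be chosen divisible enough to meet the congruence conditions of Theorem \ref{6t2.1} while keeping the $\bmod\ p^j$ congruence. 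I would settle the cusps with $p^a\mid d$ first, since that is where the hypothesis enters, and then fix $M$ last.
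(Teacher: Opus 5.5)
The paper gives no proof of this statement; it only quotes it from Cotron--Michaelsen--Stamm--Zhu. Your strategy is the Gordon--Ono argument used there: multiply $G(24\tau)$ by a $p^{j-1}M$-th power of $\eta(24\tau)^{p^a}/\eta(24p^a\tau)$, which is $\equiv 1\pmod{p^j}$, check holomorphy at the cusps, and apply Serre. The step that does not go through as written is the cusp analysis.

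You compute orders with the unrescaled data $\gcd(d,\delta_i)$, $\gcd(d,\gamma_i)$, $\gcd(d,p^a)$. But the eta-quotient to which Theorem \ref{6t2.2} applies is built from $\eta(24\delta_i\tau)$, $\eta(24\gamma_i\tau)$, $\eta(24\tau)$ and $\eta(24p^a\tau)$. If you transplant your crude bounds to $\gcd(d,24\delta_i)^2/(24\delta_i)$ and $\gcd(d,24\gamma_i)^2/(24\gamma_i)$, you would need $p^{2a}\ge 576\sum_i\gamma_is_i/\sum_i r_i/\delta_i$. Worse, for $p\in\{2,3\}$ the dichotomy ``$p^a\mid d$ versus $p^a\nmid d$'' is itself wrong. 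For example, $G=\eta(2\tau)/\eta(\tau)$ with $p=2$, $a=1$ satisfies the hypothesis. Yet at $d=2$ the order of $G(24\tau)$ is a positive multiple of $4/48-4/24<0$, even though $p^a\mid d$. It is rescued only because $\eta(24\tau)^2/\eta(48\tau)$ has positive order there. Since $p=2$ is exactly the case used in Theorem \ref{6tlac1}, this cannot be waved away.

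The repair is short. Put $g=\gcd(d,24)$ and $d'=d/g$. Then $\gcd(d',24/g)=1$, so $\gcd(d,24\delta)=g\gcd(d',\delta)$ for every $\delta$, and
\begin{align*}
\sum_{\delta}\frac{\gcd(d,24\delta)^2r_\delta}{24\delta}=\frac{g^2}{24}\sum_{\delta}\frac{\gcd(d',\delta)^2r_\delta}{\delta}.
\end{align*}
Hence at $c/d$ the orders of $G(24\tau)$ and of the auxiliary factor have the signs of your unrescaled expressions evaluated at $d'$. Now split on $d'$:
\begin{itemize}
\item If $p^a\mid d'$, your two bounds (with $d'$ in place of $d$) and the hypothesis give nonnegativity.
\item If $p^a\nmid d'$, the auxiliary factor has strictly positive order, and taking $M$ large absorbs any pole of $G$.
\end{itemize}

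Two smaller corrections are also needed.
\begin{itemize}
\item The level $N=24\,\mathrm{lcm}(p^a,\delta_i,\gamma_i)$ need not satisfy \eqref{2.4}, because the contribution of $G$ there does not depend on $M$. In the example above the sum is $3\cdot 2^{j-1}M-1$, never divisible by $3$. Taking $N=576\,\mathrm{lcm}(p^a,\delta_i,\gamma_i)$ makes every $N/\delta$ a multiple of $24$.
\item When $p=2$ and $j=1$, take $M$ even so the weight is integral. In all cases take $M$ large enough that the weight is positive before invoking Theorem \ref{6tdivi}.
\end{itemize}
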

Ono and Taguchi \cite{taguchi} proved that the action of Hecke algebras on the spaces of modular forms of higher levels modulo $2$ is locally nilpotent. We require the following result of Ono and Taguchi for our later proof.
\begin{theorem}[{{\cite[Theorem~1.3]{taguchi}}}]\label{6ttagu}
Let $a$ and $n$ be a non-negative integers and let $\ell$ be a positive integer. Let $\chi$ be a Dirichlet character with conductor $2^n\cdot N$, where $N=1,3,5,15,17$. Then there is an integer $c\ge 0$ such that for every $f(z)\in M_\ell(\Gamma_0(2^aN),\chi)\cap \mathbb{Z}[[q]]$ and every $t\ge1$, we have
\begin{align*}
f(z)\mid T_{p_1}\mid T_{p_2}\mid \cdots T_{p_{c+t}}\equiv 0\pmod{2^t},
\end{align*}
whenever $p_1, p_2, \dots, p_{c+t}$ are odd primes not dividing $N$.
\end{theorem}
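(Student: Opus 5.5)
Theorem \ref{6ttagu} is quoted from Ono and Taguchi \cite{taguchi}, and in this paper we would simply cite it. If we had to reconstruct the argument, the plan would be as follows.

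\emph{Reduction to mod $2$.} It suffices to treat $t=1$ with a uniform $c$, and then induct on $t$. Suppose $f|T_{p_1}|\cdots|T_{p_c}\equiv 0\pmod 2$. Then this form equals $2g$ with $g\in M_\ell(\Gamma_0(2^aN),\chi)\cap\mathbb{Z}[[q]]$. The space is stable under $T_p$ for odd $p\nmid N$, and the integrality of $g$ follows because the Hecke operators preserve the lattice of integral forms. Applying the mod-$2$ statement to $g$ with one further batch of primes then gives vanishing modulo $4$, and so on. Here we use that the number of extra primes needed grows by one per step, because the nilpotency is uniform on the whole finite $\mathbb{F}_2$-space of reductions. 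So the core claim is this: the Hecke algebra generated by $T_p$ for odd $p\nmid N$ acts nilpotently on $\overline{M}$, the space of reductions modulo $2$ of forms in $M_\ell(\Gamma_0(2^aN),\chi)\cap\mathbb{Z}[[q]]$.

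\emph{Passage to level $N$.} By the theory of Serre and Katz for mod-$p$ modular forms, every mod-$2$ form of level $2^aN$ with character of $2$-power conductor times a character mod $N$ is the reduction of a form of level $N$, possibly of larger weight. This uses multiplication by a power of the Hasse invariant, together with the fact that a character of $2$-power conductor is trivial modulo the maximal ideal above $2$. This step is compatible with $T_p$ for $p\nmid 2N$. Hence it suffices to prove nilpotency on mod-$2$ forms of level $N$, summed over all weights: the algebra generated there is a finite product of local rings, and nilpotency of each $T_p$ on each finite piece gives a uniform $c$ by finite-dimensionality and commutativity.

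\emph{Galois step (the main obstacle).} Take any maximal ideal $\mathfrak m$ of the mod-$2$ Hecke algebra of level $N$. By Deligne, Shimura and Serre, it gives a semisimple representation
\begin{align*}
\bar\rho_{\mathfrak m}:\mathrm{Gal}(\overline{\mathbb{Q}}/\mathbb{Q})\to GL_2(\overline{\mathbb{F}}_2),
\end{align*}
unramified outside $2N$, with $\mathrm{tr}\,\bar\rho_{\mathfrak m}(\mathrm{Frob}_p)\equiv T_p$. For $N=1$, Tate's theorem says every such representation is trivial. For $N=3,5,15,17$ the analogous statement is due to Moon and Taguchi: discriminant (Odlyzko) bounds show that the fixed field has no nontrivial extension of the required type. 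Then $\mathrm{tr}\,\bar\rho_{\mathfrak m}(\mathrm{Frob}_p)=1+1=0$, so every $T_p$ lies in every maximal ideal and is nilpotent on $\overline{M}$.

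This classification of mod-$2$ Galois representations ramified only at $2N$ is the genuinely deep input, and it is exactly what restricts $N$ to $1,3,5,15,17$. We would take it from the literature rather than reprove it.
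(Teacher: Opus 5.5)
The paper gives no proof of this statement. It quotes it from Ono--Taguchi and uses it as a black box in the proof of Theorem~\ref{6tdiv2}, so citing it is exactly what the paper does. Your reconstruction, however, has a genuine gap at its first step, the passage from modulus $2$ to modulus $2^t$. Put $M=M_\ell(\Gamma_0(2^aN),\chi)\cap\mathbb{Z}[[q]]$ and let $c$ be a uniform nilpotency index on $M/2M$. After writing $f\mid T_{p_1}\mid\cdots\mid T_{p_c}=2g$, nothing makes $g$ special: it is just some element of $M$. Killing it modulo $2$ costs another block of $c$ Hecke operators, not one. Your induction therefore gives $f\mid T_{p_1}\mid\cdots\mid T_{p_{ct}}\equiv 0\pmod{2^t}$, and the claim that ``the number of extra primes needed grows by one per step, because the nilpotency is uniform'' does not follow. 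No argument using only mod-$2$ information can do better. On the lattice $\mathbb{Z}_2[\sqrt2]$, multiplication by $\sqrt2$ squares to $0$ modulo $2$, yet its $m$-th power is divisible by $2^t$ only when $m\ge 2t$.

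Intrinsically, take $p_1=\cdots=p_{c+t}=p$ and let $t\to\infty$. The $c+t$ bound then forces every eigenvalue $\lambda$ of $T_p$ on $M\otimes\overline{\mathbb{Q}}_2$ to satisfy $v_2(\lambda)\ge 1$. Your Galois step (trivial residual representations) only shows $v_2(\lambda)>0$. At $2$-power level the Hecke fields can be ramified at $2$, and the difference is real. Let $\epsilon$ be a character of $(\mathbb{Z}[i]/(1+i)^m)^\times$ of order $8$, trivial on $\mu_4$ and on the image of $\mathbb{Z}$. The CM form attached to $(\alpha)\mapsto\epsilon(\alpha)\alpha^4$ lies in $S_5(\Gamma_0(2^{m+2}),\chi_{-4})$. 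Its coefficients are $a_p=\zeta\pi^4+\zeta^{-1}\bar\pi^4$, and $v_2(a_p)=1/2$ whenever $\zeta=\epsilon(\pi)$ is a primitive $8$th root of unity. Applying $T_p^{c+t}$ to the integral forms in the span of its Galois conjugates shows that for $N=1$, $\chi=\chi_{-4}$ the count $c+t$ cannot be achieved.

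What your outline does prove, granting the cited level-lowering and Galois inputs, is the version with $ct$ operators. This is all the proof of Theorem~\ref{6tdiv2} needs, since there $j$ is fixed and further Hecke operators preserve divisibility by $2^j$.

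A smaller point: multiplying by a power of the Hasse invariant only changes the weight. Removing the $2$-power level is the separate Serre--Katz result that mod-$2$ forms on $\Gamma_1(2^aN)$ are mod-$2$ forms of level $N$. Its compatibility with $T_p$ for $p\nmid 2N$ is immediate, because modulo $2$ these operators act on $q$-expansions by $a(pn)+a(n/p)$.
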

We are now ready to prove our results listed in the previous section.
\section{Proofs of Theorem \ref{6tmod4} and \ref{6tmod8}}\label{6s3} In this section, we first observe the functional equation for $\bar{B}^k_{r,s}(q)$ which is a direct generalization of Theorem \ref{6t1} and \cite[Corollary 3.2]{chacon}.
\begin{lemma}\label{6l2}
	For all $r,s,k\ge1$, we have
	\begin{align*}
		\bar{B}^k_{r,s}(q)=\phi^{sk}(q)\prod_{i=1}^{\infty}\left(\phi(q^{2^i})\right)^{(r+s)k\cdot2^{i-1}}.
	\end{align*}	
\end{lemma}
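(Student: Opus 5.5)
The plan is to obtain Lemma \ref{6l2} directly from Theorem \ref{6t1}, using the fact that the $k$-tuple generating function is a $k$-th power of the single-partition one. Comparing \eqref{6gf} with the generating function of $\bar{b}_{r,s}(n)$ gives
\begin{align*}
\bar{B}^k_{r,s}(q)=\dfrac{f_2^{(3s-2r)k}}{f_1^{2sk}f_4^{(s-r)k}}=\left(\dfrac{f_2^{3s-2r}}{f_1^{2s}f_4^{s-r}}\right)^k=\left(\bar{B}^1_{r,s}(q)\right)^k .
\end{align*}
This identity is also clear combinatorially: a $k$-tuple of weight $n$ is a choice of $k$ independent overcolored partitions whose weights sum to $n$, so its generating function is the product of $k$ copies of the single one.

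Next we raise the identity of Theorem \ref{6t1} to the $k$-th power:
\begin{align*}
\left(\bar{B}^1_{r,s}(q)\right)^k=\phi(q)^{sk}\left(\prod_{i=1}^{\infty}\phi(q^{2^i})^{(r+s)2^{i-1}}\right)^k=\phi(q)^{sk}\prod_{i=1}^{\infty}\phi(q^{2^i})^{(r+s)k\cdot 2^{i-1}} .
\end{align*}
Bringing the $k$-th power inside the infinite product is legitimate as an identity of formal power series in $q$. Each factor $\phi(q^{2^i})$ is $1+O(q^{2^i})$, so only finitely many factors affect any given coefficient, and the product converges $q$-adically (and absolutely for $|q|<1$).

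There is no real obstacle here. The only point needing care is justifying that exponentiation commutes with the convergent infinite product, which follows from the truncation argument above. If one preferred an argument independent of Theorem \ref{6t1}, one could instead verify the product telescopically. Using $\phi(q)=f_2^5/(f_1^2f_4^2)$, the factor $\phi(q^{2^i})$ equals $f_{2^{i+1}}^5/(f_{2^i}^2f_{2^{i+2}}^2)$, and the exponents of each $f_{2^j}$ in the finite product up to $i=N$ collapse to those in \eqref{6gf}, times a tail that tends to $1$ as $N\to\infty$. The primary route, however, is simply the $k$-th power of Theorem \ref{6t1}.
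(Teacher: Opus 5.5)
Your proposal is correct and matches the paper, which states this lemma without a separate proof and calls it a direct generalization of Theorem \ref{6t1}: since $\bar{B}^k_{r,s}(q)=\left(\bar{B}^1_{r,s}(q)\right)^k$ by \eqref{6gf}, raising Theorem \ref{6t1} to the $k$-th power gives the claim. Your observation that each factor $\phi(q^{2^i})$ is $1+O(q^{2^i})$, so the $k$-th power can be taken factor by factor, supplies the one small justification the paper leaves implicit.
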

\begin{proof}[\textbf{Proof of Theorem \ref{6tmod4}}]
Using definition of $\phi(q)$ and Lemma \ref{6l2}, we have
\begin{align*}
\bar{B}^k_{r,s}(q)=\sum_{n=0}^{\infty}\bar{b}^k_{r,s}(n)q^n&=\phi^{sk}(q)\left(\phi(q^2)\right)^{(r+s)k}\prod_{i\ge2}^{\infty}\left(\phi(q^{2^i})\right)^{(r+s)k\cdot2^{i-1}}\\
&\equiv\phi^{sk}(q)\left(\phi(q^2)\right)^{(r+s)k}\pmod{4}\\
&\equiv \left(1+2\sum_{n=1}^{\infty}q^{n^2}\right)^{sk}\left(1+2\sum_{n=1}^{\infty}q^{2n^2}\right)^{(r+s)k}\pmod{4}.
\end{align*}
 For $k=2j$ and $2j+1$, where $j\in\mathbb{Z}$, the above congruence becomes
\begin{align*}
\sum_{n\ge0}^{}\bar{b}^{2j}_{r,s}(n)q^n&\equiv \left(1+2\sum_{n=1}^{\infty}q^{n^2}\right)^{2js}\left(1+2\sum_{n=1}^{\infty}q^{2n^2}\right)^{2j(r+s)}\pmod{4}
\end{align*}
and
\begin{align*}
\sum_{n\ge0}^{}\bar{b}^{2j+1}_{r,s}(n)q^n&\equiv \left(1+2\sum_{n=1}^{\infty}q^{n^2}\right)^{(2j+1)s}\left(1+2\sum_{n=1}^{\infty}q^{2n^2}\right)^{(2j+1)(r+s)}\pmod{4},
\end{align*} 
respectively. Which further on simplification gives
 \begin{align*}
 	\sum_{n=0}^{\infty}\bar{b}^{2j}_{r,s}(n)q^n&\equiv 1\pmod{4}
 \end{align*}
 and 
 \begin{align*}
 	\sum_{n=0}^{\infty}\bar{b}^{2j+1}_{r,s}(n)q^n&\equiv \left(1+2\sum_{n=1}^{\infty}q^{n^2}\right)^s\left(1+2\sum_{n=1}^{\infty}q^{2n^2}\right)^{(r+s)}\pmod{4},
 \end{align*}
 respectively. 	This completes the proof of Theorem \ref{6tmod4}.
\end{proof}

\begin{proof}[\textbf{Proof of Theorem \ref{6tmod8}}]
Using the definition of $\phi(q)$, Lemma \ref{6l2} and the congruence $\left(\phi(q^{2^i})\right)^{(r+s)k\cdot2^{i-1}}\equiv 1\pmod{8}$, for $i\ge3$, we have
\begin{align*}
\sum_{n=0}^{\infty}\bar{b}^k_{r,s}(n)q^n
&\equiv\phi^{sk}(q)\left(\phi(q^2)\right)^{(r+s)k}\left(\phi(q^4)\right)^{(r+s)2k}\pmod{8}\\
&\equiv \left(1+2sk\sum_{n=1}^{\infty}q^{n^2}+2sk(sk-1)\sum_{n=1}^{\infty}q^{2n^2}\right)\left(1+2(r+s)k\sum_{n=1}^{\infty}q^{2n^2}\right.\\
&\quad\left. +2k(r+s)((r+s)k-1)\sum_{n= 1}^{\infty}q^{4n^2}\right)\left(1+4k(r+s)\sum_{n=1}^{\infty}q^{4n^2}\right.\\
&\quad \left.+ 4k(r+s)(2k(r+s)-1)\sum_{n=1}^{\infty}q^{8n^2}\right)\pmod{8}\\
&\equiv1+2sk\sum_{n=1}^{\infty}q^{n^2}+2k(s^2k+r)\sum_{n=1}^{\infty}q^{2(2n-1)^2}+2k(r+s)\left((r+s)k+1\right)\sum_{n=1}^{\infty}q^{4n^2}\\
&\quad+4sk^2(r+s)(sk-1)\sum_{n=1}^{\infty}q^{4n^2}+4sk^2(r+s)\sum_{m,n=1}^{\infty}q^{n^2+2m^2}\\
&\quad+4sk^2(r+s)\left((r+s)k-1\right)\sum_{m,n=1}^{\infty}q^{n^2+4m^2}\\
&\quad+4sk^2(r+s)\left((r+s)k-1\right)(sk-1)\sum_{m,n=1}^{\infty}q^{2n^2+4m^2}\\
&\quad+2k(3r+s^2k+2s)\sum_{n=1}^{\infty}q^{2(2n)^2}\pmod{8}.
\end{align*}
 For $k=2j$ and $2j+1$, where $j\in\mathbb{Z}$, the above congruence becomes
\begin{align*}
	\sum_{n=0}^{\infty}\bar{b}^k_{r,s}(n)q^n&\equiv1+2sk\sum_{n=1}^{\infty}q^{n^2}+2kr\sum_{n=1}^{\infty}q^{2(2n-1)^2}+2k(r+s)\sum_{n=1}^{\infty}q^{4n^2}\\
	&\quad+6rk\sum_{n=1}^{\infty}q^{2(2n)^2}\pmod{8}
\end{align*}
and
\begin{align*}
\sum_{n=0}^{\infty}\bar{b}^k_{r,s}(n)q^n&\equiv1+2sk\sum_{n=1}^{\infty}q^{n^2}+2k(s^2k+r)\sum_{n=1}^{\infty}q^{2(2n-1)^2}+2k(r+s)\left((r+s)k+1\right)\sum_{n=1}^{\infty}q^{4n^2}\\
&\quad+4sk^2(r+s)\sum_{m,n=1}^{\infty}q^{n^2+2m^2}+2k(3r+s^2k+2s)\sum_{n=1}^{\infty}q^{2(2n)^2}\pmod{8},
\end{align*}
respectively. Which further on simplification gives
\begin{align*}
	\sum_{n=0}^{\infty}\bar{b}^k_{r,s}(n)q^n&\equiv 1+2rk\sum_{n=1}^{\infty}q^{(2n)^2}+2sk\sum_{n=1}^{\infty}q^{(2n-1)^2}+2rk\sum_{n=1}^{\infty}q^{2(2n-1)^2}\\
	&\quad+6rk\sum_{n=1}^{\infty}q^{2(2n)^2}\pmod{8}
\end{align*}
and
\begin{align*}
	\sum_{n=0}^{\infty}\bar{b}^k_{r,s}(n)q^n&\equiv 1+2sk\sum_{n=1}^{\infty}q^{(2n-1)^2}+2k\left(s+(r+s)\left((r+s)k+1\right)\right)\sum_{n=1}^{\infty}q^{(2n)^2}\\
	&\quad+2k(s^2k+r)\sum_{n=1}^{\infty}q^{2(2n-1)^2}+2k(3r+s^2k+2s)\sum_{n=1}^{\infty}q^{2(2n)^2}\\
	&\quad+4s(r+s)\sum_{m,n=1}^{\infty}q^{n^2+2m^2}\pmod{8},
\end{align*}
respectively. This completes the proof of Theorem \ref{6tmod8}.
\end{proof}
\section{Proofs of Theorems \ref{6tlac1} and \ref{6tdiv2}}\label{6s4}
In this section, we prove Theorem \ref{6tlac1}-\ref{6tdiv1} using modular form theory and establish Theorem \ref{6tdiv2} employing a result of Ono and Taguchi.

\begin{proof}[\textbf{Proof of Theorem \ref{6tlac1}}]
	From \eqref{6gf}, we have
	\begin{align*}
		\sum_{n= 0}^{\infty}\bar{b}^k_{r,s}(n)q^n=\dfrac{f_2^{(3s-2r)k}}{f_1^{2sk}f_4^{(s-r)k}}=\frac{\eta^{(3s-2r)k}(2z)}{\eta^{2sk}(z)\eta^{(s-r)k}(z)}.
	\end{align*}
Using the notations in \eqref{6gtau} and its subsequent paragraph, we have
	\begin{align*}
		\delta_1=2,\;\; r_1=(3s-2r)k,\;\; \gamma_1=1,\;\; s_1=2sk,\;\; \gamma_2=4,\; \text{ and } \;s_2=(s-r)k.
	\end{align*}
	Also, $D_G=2$ and the weight is $-(rk)/2\in\mathbb{Z}$ if and only if $r=2^\alpha m$ or $k=2^\alpha m$ with $\alpha\ge 1$ and $gcd(2,m)=1$. We see that $2\mid D_G$ and 
	\begin{align*}
		2\ge\sqrt{\frac{2sk+4(s-r)k}{\dfrac{(3s-2r)k}{2}}}=\sqrt{4}=2.
	\end{align*}
	The proof follows choosing $p=2$ and $a=1$ in Theorem \ref{6tlacu2}.
\end{proof}

\begin{proof}[\textbf{Proof of Theorem \ref{6tdiv1}}]
From \eqref{6gf}, we have
\begin{align}\label{6egf}
\sum_{n= 0}^{\infty}\bar{b}^k_{r,s}(n)q^n=\dfrac{f_2^{(3s-2r)k}}{f_1^{2sk}f_4^{(s-r)k}}.
\end{align}
For prime $p\ge 3$, we define
\begin{align*}
C(z):=\dfrac{\eta^p(24z)}{\eta(24pz)}.
\end{align*}
Using Lemma \ref{bt}, we get
\begin{align}\label{6edivimodp}
C(z)^{p^j}(z)=\dfrac{\eta^{p^{j+1}}(24z)}{\eta^{p^j}(24pz)}\equiv 1\pmod{p^{j+1}}.
\end{align}
We now define 
\begin{align*}
B_{r,s,k,j}(z):=\dfrac{\eta^{(3s-2r)k}(48z)}{\eta^{2sk}(24z)\eta^{(s-r)k}(96z)}\cdot C^{p^j}(z)=\dfrac{\eta^{p^{j+1}-2sk}(24z)\eta^{(3s-2r)k}(48z)}{\eta^{p^j}(24pz)\eta^{(s-r)k}(96z)}.
\end{align*}
From \eqref{6egf} and \eqref{6edivimodp}, we have
\begin{align}\label{6ediv2}
B_{r,s,k,j}(z)\equiv\dfrac{\eta^{(3s-2r)k}(48z)}{\eta^{2sk}(24z)\eta^{(s-r)k}(96z)}= \dfrac{f_{48}^{(3s-2r)k}}{f_{24}^{2sk}f_{96}^{(s-r)k}}\equiv \sum_{n= 0}^{\infty}\bar{b}^k_{r,s}(n)q^{24n}\pmod{p^{j+1}}.
\end{align}
Next, we show that $B_{r,s,k,j}(z)$ is a modular form. By Theorem \ref{6t2.1}, we see that the level of $B_{r,s,k,j}(z)$ is $N=96M$, where $M$ is the smallest positive integer which satisfies
\begin{align*}
96M\left(\dfrac{p^{j+1}-2sk}{24}+\dfrac{(3s-2r)k}{48}-\dfrac{p^j}{24p}-\dfrac{(s-r)k}{96}\right)\equiv 0\pmod{24},
\end{align*}
which is equivalent to
\begin{align*}
M\left(4p^{j-1}(p^2-1)-3k(r+s)\right)\equiv 0\pmod{24}.
\end{align*}
Therefore, $M=2^3$ so that $N=2^83$. Since the cusps of $\Gamma_0(2^83)$ are of the form $c/d$ where $d\mid 2^83$ and $gcd(c,d)=1$, it follows from Theorem \ref{6t2.2} that $B_{r,s,k,j}(z)$ is holomorphic at a cusp $c/d$ if and only if 
\begin{align}\label{6ediv1}
(p^{j+1}-2sk)\dfrac{gcd(d,24)^2}{24}+(3s-2r)k\dfrac{gcd(d,48)^2}{48}-p^j\dfrac{gcd(d,24p)^2}{24p}\nonumber\\
-(s-r)k\dfrac{gcd(d,96)^2}{96}\ge 0.
\end{align}
We now examine the non-negativity of \eqref{6ediv1} for all possible divisors of $2^83$.\\
For $d=2^a3^b$ with $0\le a\le 3$ and $0\le b\le1$, \eqref{6ediv1} becomes
\begin{align}\label{6einq1}
4p^{j-1}(p^2-1)-3k(r+s)\ge 0.
\end{align}
For $d=2^43^b$ with $0\le b\le1$, \eqref{6ediv1} becomes
\begin{align}\label{6einq2}
p^{j-1}(p^2-1)-3k(r-s)\ge 0.
\end{align}
For $d=2^a3^b$ with $5\le a\le 8$ and $0\le b\le1$, \eqref{6ediv1} becomes
\begin{align*}
p^{j-1}(p^2-1)\ge 0,
\end{align*}
which is always true. The inequalities \eqref{6einq1} and \eqref{6einq2} hold since we have assumed that $p^{j-1}(p^2-1)\ge 3k(r+s)$. Thus, $B_{r,s,k,j}(z)$ is holomorphic at cusp $c/d$. The weight of $B_{r,s,k,j}(z)$ is $\ell= \left(p^j(p-1)-rk\right)/2$, which is an integer since $r=2^\alpha m$ or $k=2^\alpha m$ with $\alpha\ge 1$ and $gcd(2,m)=1$. The associated character is given by 
\begin{align*}
\chi(\bullet)=\left(\dfrac{(-1)^\ell 2^{3p^j(p-1)+sk-3rk}3^{p^j(p-1)-rk}p^{-p^j}}{\bullet}\right).
\end{align*}
Hence, $B_{r,s,k,j}(z)\in M_{\left(p^j(p-1)-rk\right)/2}(\Gamma_0(2^83),\chi(\bullet))$.\\
Therefore, by Theorem \ref{6tdivi}, the Fourier coefficients of $B_{r,s,k,j}(z)$ are almost always divisible by $p^j$. Combining this with \eqref{6ediv2}, we complete the proof for divisibility of $\bar{b}^k_{r,s}(n)$.
\end{proof}

\begin{proof}[\textbf{Proof of Theorem \ref{6tdiv2}}]
From equation \eqref{6ediv2} with $p=2$, we have
\begin{align*}
	B_{r,s,k,j}(z)\equiv \sum_{n= 0}^{\infty}\bar{b}^k_{r,s}(n)q^{24n}\pmod{2^{j+1}},
\end{align*}
which implies
\begin{align}\label{6etag1}
	B_{r,s,k,j}(z):=\sum_{n=0}^{\infty}	B_{r,s,k,j}(n)q^n\equiv\sum_{n=0}^{\infty}\bar{b}^k_{r,s}\left(\dfrac{n}{24}\right)q^n\pmod{2^{j+1}}.
\end{align}
We have $B_{r,s,k,j}(z)\in M_{2^{j-1}-rk/2}(\Gamma_0(2^83), \chi(\bullet))$, where $\chi(\bullet)$ denotes the associated character. By Theorem \ref{6ttagu}, we find that there exits an integer $t\ge0$ such that for every $u\ge 1$ and primes $p_1,p_2, \dots, p_{t+u}$ greater than $3$, we have
\begin{align*}
B_{r,s,k,j}(z)\mid T_{p_1}\mid T_{p_2}\mid\cdots T_{t+u}\equiv 0\pmod{2^j}.
\end{align*}
Using the definition of Hecke operators, it follows that if $p_1,p_2, \dots, p_{t+u}$ are distinct primes and if $n\ge 1$ is coprime to $p_1,p_2, \dots, p_{t+u}$, then
\begin{align}\label{6etag2}
B_{r,s,k,j}(p_1\cdots p_{t+u}\cdot n)\equiv 0\pmod{2^j}.
\end{align}
In view of \eqref{6etag1} and \eqref{6etag2}, we complete the proof.
\end{proof}
\section{Proofs of Theorems \ref{6tmodp1}-\ref{6tmod3.4}}\label{6s5}
In this section, we establish congruences modulo prime for $\bar{b}_{r,s}^k(n)$ using elementary $q$-series techniques.
\begin{proof}[\textbf{Proof of Theorem \ref{6tmodp1}}]
	From Lemma \ref{6l2}, we have
	\begin{align*}
		\sum_{n=0}^{\infty}\bar{b}^{k}_{p-s,s}(n)q^n&=\left(\phi(q)\right)^{sk}\prod_{i\ge1}^{}\left(\phi(q^{2^i})\right)^{p\cdot k \cdot2^i}\\
		&\equiv \phi^{t}(q^p)\phi(q)\prod_{i\ge1}^{}\left(\phi(q^{p\cdot2^i})\right)^{k\cdot2^i}\pmod{p},
	\end{align*}
	for some positive integer $t$, since $sk\equiv 1\pmod{p}$.
	Consider $pn+r\equiv \ell^2\pmod{p}$ which is true if and only if $r\equiv \ell^2\pmod{p}$. Since we have assumed $r$ is a quadratic non-residue modulo $p$, we get
	\begin{align}\label{6e1}
		\bar{b}^{k}_{p-s,s}(pn+r)\equiv 0\pmod{p}.
	\end{align} 
	Now, for all $m\ge j\ge 0$, from \eqref{6gf}, we have
	\begin{align*}
		\sum_{n=0}^{\infty}\bar{b}^{k}_{p(m-j)+p-s,pm+s}(n)q^n&=\dfrac{f_2^{(m+2j)pk}}{f_1^{2mpk}f_4^{jpk}}\cdot \dfrac{f_2^{(5s-2p)k}}{f_1^{2sk}f_4^{(2s-p)k}}\\
		&\equiv\dfrac{f_{2p}^{(m+2j)k}}{f_p^{2mk}f_{4p}^{jk}}\sum_{n=0}^{\infty}\bar{b}^k_{p-s,p}(n)q^n\pmod{p}.
	\end{align*}
	Extracting the terms of the form $q^{pn+r}$ and using \eqref{6e1}, we get the desired result.
\end{proof}

\begin{proof}[\textbf{Proof of Theorem \ref{6tmodp2}}]
	Let $p$ be a prime and $\ell\ge1$. Using \eqref{6gf} and Lemma \ref{bt}, we have
	\begin{align*}
		\sum_{n=0}^{\infty}\bar{b}^{p^k\ell}_{r,s}(n)q^n=\dfrac{f_2^{(3s-2r)p^k\ell}}{f_1^{2sp^k\ell}f_4^{(s-r)p^k\ell}}\equiv \dfrac{f_{2p}^{(3s-2r)p^{k-1}\ell}}{f_{p}^{2sp^{k-1}\ell}f_{4p}^{(s-r)p^{k-1}\ell}}\pmod{p}.
	\end{align*}
	Extracting the terms of the form $q^{pn}$ from the above congruence, we obtain
	\begin{align*}
		\sum_{n=0}^{\infty}\bar{b}^{p^k\ell}_{r,s}(pn)q^n=\dfrac{f_{2}^{(3s-2r)p^{k-1}\ell}}{f_{1}^{2sp^{k-1}\ell}f_{4}^{(s-r)p^{k-1}\ell}}\equiv\dfrac{f_{2p}^{(3s-2r)p^{k-2}\ell}}{f_{p}^{2sp^{k-2}\ell}f_{4p}^{(s-r)p^{k-2}\ell}}\pmod{p}.
	\end{align*}
	Repeatedly extracting the terms of the form $q^{pn}$ from the above congruence, we arrive at
	\begin{align*}
	\sum_{n=0}^{\infty}\bar{b}^{p^{k}j}_{r,s}(p^{k-1}n)q^n=\dfrac{f_{2}^{(3s-2r)p\ell}}{f_{1}^{2sp\ell}f_{4}^{(s-r)p\ell}}\equiv\dfrac{f_{2p}^{(3s-2r)\ell}}{f_{p}^{2s\ell}f_{4p}^{(s-r)\ell}}\pmod{p}.
	\end{align*}
	Finally, extracting the terms of the form $q^{pn+j}$ with $1\le j<p$, yields the desired result.
\end{proof}
\begin{proof}[\textbf{Proof of Theorem \ref{6tmodp3}}]
From \eqref{6gf} and Lemma \ref{bt}, we have
\begin{align*}
\sum_{n=0}^{\infty}\bar{b}_{p^2r+pt, p^2s+pu}^{p^2k+p\ell}(n)q^n\equiv \dfrac{f_{2p}^{(3(p^2s+pu)-2(p^2r+pt))(pk+\ell)}}{f_p^{2(p^2s+pu)(pk+\ell)}f_{4p}^{(p^2(s-r)+p(u-t))(pk+\ell)}}\pmod{p}.
\end{align*}
Extracting the terms of the form $q^{pn}$ from the above congruence and using Lemma \ref{bt}, we get
\begin{align*}
\sum_{n=0}^{\infty}\bar{b}_{p^2r+pt, p^2s+pu}^{p^2k+p\ell}(pn)q^n\equiv \dfrac{f_{2p}^{(3(ps+u)-2(pr+t))(pk+\ell)}}{f_p^{2(ps+u)(pk+\ell)}f_{4p}^{(p(s-r)+u-t)(pk+\ell)}}\pmod{p}.
\end{align*}
Extracting the terms of the form $q^{pn+j}$ with  $1\le j<p$, we complete the proof.
\end{proof}
In the remainder of the present section all the congruences hold to the modulo 3.
\begin{proof}[\textbf{Proof of Theorem \ref{6tmod3.1}}]
Thanks to \eqref{6gf}, we have
\begin{align*}
\sum_{n=0}^{\infty}\bar{b}^k_{3r,3s}(n)q^n=\dfrac{f_2^{3k(3s-2r)}}{f_1^{3(2sk)}f_4^{3k(s-r)}}\equiv\dfrac{f_6^{(3s-2r)k}}{f_3^{2sk}f_{12}^{(s-r)k}}.
\end{align*}
Extracting the terms of the form $q^{3n+1}$ and $q^{3n+2}$ from the above congruence yields congruence \eqref{6e3n1.1}.\\
Again, thanks to \eqref{6gf} and Lemma \ref{bt}, we have
\begin{align}
\sum_{n= 0}^{\infty}\bar{b}^{3k+2}_{3r+1,3s}(n)q^n&\equiv\dfrac{f_6^{(3k+2)(3s-2r)-(2k+1)}}{f_3^{2s(3k+2)}f_{12}^{(s-r)(3k+2)-k}}\cdot \dfrac{f_4^2}{f_2}\label{6e3.1}
\end{align}
and 
\begin{align}
\sum_{n=0}^{\infty}\bar{b}^{3k+1}_{3r+2,3s}(n)q^n&\equiv \dfrac{f_6^{(3s-r)(3k+1)-(4k+1)}}{f_3^{2s(3k+1)}f_{12}^{(s-r)(3k+1)-2k}}\cdot \dfrac{f_4^2}{f_2}.\label{6e3.2}
\end{align}
On using dissection \eqref{epsip} in \eqref{6e3.1} and \eqref{6e3.2}, and then extracting terms of the form $q^{3n+1}$ from the resulting congruences, we complete the proof of \eqref{6e3n1.2}.  \\
From \eqref{6gf} and Lemma \ref{bt}, we have
\begin{align}
\sum_{n=0}^{\infty}\bar{b}^{3k+1}_{3r+2,3s+1}(n)q^n&\equiv \dfrac{f_6^{(3k+2)(3s+1-2r)-2(k+1)}}{f_3^{2(3k+2)+(2k+1)}f_{12}^{(s-r)(3k+2)}}\cdot \dfrac{f_2^2}{f_1}\label{6e3.3}
\end{align}
and
\begin{align}
\sum_{n=0}^{\infty}\bar{b}^{3k+1}_{3r+2,3s+2}(n)q^n&\equiv\dfrac{f_6^{(3s+2-2r)(3k+1)-(4k+2)}}{f_3^{2s(3k+1)+(4k+1)}f_{12}^{(s-r)(3k+1)}}\cdot \dfrac{f_2^2}{f_1}\label{6e3.4}.
\end{align}
Again, using \eqref{epsip} in \eqref{6e3.3} and \eqref{6e3.4}, and then extracting terms of the form $q^{3n+1}$ from the resulting congruences, we complete the proof of \eqref{6e3n2.1}.
\end{proof}
\begin{proof}[\textbf{Proof of Theorem \ref{6tmod3.2}}]
Thanks to \eqref{6gf} and Lemma \ref{bt}, we have
\begin{align*}
	\sum_{n=0}^{\infty}\bar{b}_{9r+3t+2, 9s+3u}^{9k+3}(n)q^n&\equiv\dfrac{f_6^{(3(9s+3u)-2(9r+3t+2))(3k+1)}}{f_3^{2(9s+3u)(3k+1)}f_{12}^{(9(s-r)+3(u-t)-2)(3k+1)}}\\
	\sum_{n=0}^{\infty}\bar{b}_{9r+3t+1,9s+3u}^{9k+6}(n)q^n&\equiv \dfrac{f_6^{(3(9s+3u)-2(9r+3t+1))(3k+2)}}{f_3^{2(9s+3u)(3k+2)}f_{12}^{(9(s-r)+3(u-t)-1)(3k+2)}}\\
		\sum_{n=0}^{\infty}\bar{b}_{9r+3t+2,9s+3u+2}^{9k+3}(n)q^n&\equiv \dfrac{f_6^{(3(9s+3u)-2(9r+3t)+2)(3k+1)}}{f_3^{2(9s+3u+2)(3k+1)}f_{12}^{(9(s-r)+3(u-t))(3k+1)}}\\
		\sum_{n=0}^{\infty}\bar{b}_{9r+3t+1,9s+3u+1}^{9k+6}(n)q^n&\equiv\dfrac{f_6^{(3(9s+3u)-2(9r+3t)+1)(3k+2)}}{f_3^{2(9s+3u+1)(3k+2)}f_{12}^{(9(s-r)+3(u-t))(3k+2)}}\\
			\sum_{n= 0}^{\infty}\bar{b}_{9r+3t+2,9s+3u+1}^{9k+3}(n)q^n&\equiv \dfrac{f_6^{(3(9s+3u)-2(9r+3t)-1)(3k+1)}}{f_3^{2(9s+3u+1)(3k+1)}f_{12}^{(9(s-r)+3(u-t)-1)(3k+1)}}\\
			\sum_{n=0}^{\infty}\bar{b}_{9r+3t+1,9s+3u+2}^{9k+6}(n)q^n&\equiv \dfrac{f_6^{(3(9s+3u)-2(9r+3t)+4)(3k+2)}}{f_3^{2(9s+3u+2)(3k+2)}f_{12}^{(9(s-r)+3(u-t)+1)(3k+2)}}.
\end{align*}
Extracting the terms of the form $q^{3n}$ from the above congruences, we obtain
\begin{align}
	\sum_{n=0}^{\infty}\bar{b}_{9r+3t+2, 9s+3u}^{9k+3}(3n)q^n&\equiv\dfrac{f_6^{(3(3s+u)-2(3r+t)-1)(3k+1)-k}}{f_3^{2(3s+u)(3k+1)}f_{12}^{(3(s-r)+u-t)(3k+1)-2k}}\cdot\psi(q^2)\label{6e3.7}\\
	\sum_{n=0}^{\infty}\bar{b}_{9r+3t+1,9s+3u}^{9k+6}(3n)q^n&\equiv \dfrac{f_6^{(3(3s+u)-2(3r+t)-1)(3k+2)-k}}{f_3^{2(3s+u)(3k+2)}f_{12}^{(3(s-r)+u-t)(3k+2)-2k}}\cdot\psi(q^2)\label{6e3.8}\\
		\sum_{n=0}^{\infty}\bar{b}_{9r+3t+2,9s+3u+2}^{9k+3}(3n)q^n&\equiv \dfrac{f_6^{(3(3s+u)-2(3r+t))(3k+1)+2k}}{f_3^{2(3s+u)(3k+1)+4k+1}f_{12}^{(3(s-r)+u-t)(3k+1)}}\cdot \psi(q)\label{6e3.11}\\
		\sum_{n=0}^{\infty}\bar{b}_{9r+3t+1,9s+3u+1}^{9k+6}(3n)q^n&\equiv \dfrac{f_6^{(3(3s+u)-2(3r+t))(3k+2)+2k}}{f_3^{2(3s+u)(3k+2)+2k+1}f_{12}^{(3(s-r)+u-t)(3k+2)}}\cdot\psi(q)\label{6e3.12}\\
			\sum_{n= 0}^{\infty}\bar{b}_{9r+3t+2,9s+3u+1}^{9k+3}(3n)q^n&\equiv\dfrac{f_6^{(3(3s+u)-2(3r+t))(3k+1)-k}}{f_3^{2(3s+u)(3k+1)+2k+1}f_{12}^{(3(s-r)+u-t)(3k+1)-k}}\cdot \psi(-q)\label{6e3.15}\\
			\sum_{n=0}^{\infty}\bar{b}_{9r+3t+1,9s+3u+2}^{9k+6}(3n)q^n&\equiv\dfrac{f_6^{(3(3s+u)-2(3r+t)+1)(3k+2)+k+1}}{f_3^{2(3s+u)(3k+2)+4k+3}f_{12}^{(3(s-r)+u-t)(3k+2)+k+1}}\cdot\psi(-q)	\label{6e3.16}.
\end{align}
Employing \eqref{epsip} in congruences \eqref{6e3.7}-\eqref{6e3.8} and then extracting the terms of the form $q^{3n+1}$ from the resulting equations, we complete the proof of congruence \eqref{6e9.1}. Similarly, employing \eqref{epsip} in congruences \eqref{6e3.11}-\eqref{6e3.16} and then extracting the terms of the form $q^{3n+2}$ from the resulting equations, we complete the proof of congruences \eqref{6e9.2} and \eqref{6e9.3}.
\end{proof}
\begin{proof}[\textbf{Proof of Theorem \ref{6tmod3.3}}]
	As the proof of each elements of the sets
	$A_{t,u,\ell},B_{t,u,\ell}, C_{t,u,\ell},\\ D_{t,u,\ell},$ and  $E_{t,u,\ell}$ are similar, we provide the proof for only one representative from each set. Thanks to \eqref{6gf}, \eqref{epsi}, \eqref{epsim} and Lemma \ref{bt}, we have
	\begin{align*}
		\sum_{n=0}^{\infty}\bar{b}_{9r+1,9s+2}^{9k+5}(n)q^n&\equiv\dfrac{f_6^{3k(2(9s+2)-2(9s+1))+5((9s+2)-6r)-3}}{f_3^{6k(9s+2)+10(3s)+7}f_{12}^{3k(9s-9r+1)+5(3s-3r)+2}}\cdot \psi(-q)\\
		\sum_{n=0}^{\infty}\bar{b}_{9r+4,9s+1}^{9k+8}(n)q^n&\equiv\dfrac{f_6^{3k(3(9s+1)-2(9r+4))+8((9s+1)-6r)-11}}{f_3^{6k(9s+1)+16\cdot 3s+5}f_{12}^{3k(9s-9r-3)+8(3s-3r-1)}}\cdot \psi(q)\\
		\sum_{n=0}^{\infty}\bar{b}_{9r+2,9s}^{9k+7}(n)q^n&\equiv \dfrac{f_6^{3k(3(9s)-2(9r+2))+7(9s-6r)-9}}{f_3^{6s(9k+7)}f_{12}^{3k(9s-9r-2)+7(3s-3r)-4}}\cdot \psi(q^2)\\
		\sum_{n=0}^{\infty}\bar{b}_{9r+4,9s+2}^{9k+1}(n)q^n&\equiv\dfrac{f_6^{3k(3(9s)-2(9r)-2)+(9s-6r)-1}}{f_3^{6k(9s+2)+6s+1}f_{12}^{3k(9s-9r-2)+(3s-3r)-1}}\cdot \dfrac{1}{\psi(-q)}\\
		\sum_{n=0}^{\infty}\bar{b}_{9r+1,9s+4}^{9k+4}(n)q^n&\equiv\dfrac{f_6^{3k(3(9s)-2(9r)+10)+4(9s-6r)+13}}{f_3^{6k(9s+4)+8(3s)+11}f_{12}^{(3s-3r+1)(9k+4)}}\cdot F(q).
	\end{align*}
	Employing the dissections for $\psi(-q), \psi(q), \psi(q^2), 1/\psi(-q)$ and $F(q)$ from Lemma \ref{6l3d} in the above congruences accordingly and then extracting the terms of the form $q^{3n}$, we obtain
	\begin{align*}
		\sum_{n=0}^{\infty}\bar{b}_{9r+1,9s+2}^{9k+5}(3n)q^n&\equiv\dfrac{f_6^{k(3(9s+2)-2(9s+1))+5(3s-2r)+7}}{f_3^{2k(9s+2)+10s+4}f_{12}^{k(9s-9r+1)+5(s-r)+3}}\cdot \psi(q^2)\\
		\sum_{n=0}^{\infty}\bar{b}_{9r+4,9s+1}^{9k+8}(3n)q^n&\equiv\dfrac{f_6^{k(3(9s+1)-2(9s+4))+8(3s-2r)-2}}{f_3^{k(9s+1)+16s}f_{12}^{k(9s-9r-3)+8(s-r)-2}}\cdot \psi(q^2)\\
		\sum_{n=0}^{\infty}\bar{b}_{9r+2,9s}^{9k+7}(3n)q^n&\equiv\dfrac{f_6^{k(3(9s)-2(9r+2))+7(3s-2r)-1}}{f_3^{2s(9k+7)}f_{12}^{k(9s-9r-2)+7(s-r)}}\cdot \psi(q^2)\\
		\sum_{n=0}^{\infty}\bar{b}_{9r+4,9s+2}^{9k+1}(3n)q^n&\equiv\dfrac{f_6^{k(3(9s)-2(9r)-2)+(3s-2r)+8}}{f_3^{3k(9s+2)+2s+4}f_{12}^{k(9s-9r-2)+(s-r)+12}}\cdot \psi(q^2)\\
		\sum_{n=0}^{\infty}\bar{b}_{9r+1,9s+4}^{9k+4}(3n)q^n&\equiv\dfrac{f_6^{k(3(9s)-2(9r)+10)+4(3s-2r)+3}}{f_3^{2k(9s+u)+8s}f_{12}^{(s-r)(9k+4)+3k+2}}\cdot \psi(q^2).
	\end{align*}
	Further, again employing \eqref{epsip} in the above congruences and then extracting the terms of the form $q^{3n+1}$, we complete the proof.
\end{proof}
\begin{proof}[\textbf{Proof of Theorem \ref{6tmod3.4}}]
	As the proof of each elements of the sets
	$G_{t,u,\ell}, H_{t,u,\ell}, I_{t,u,\ell},\\ J_{t,u,\ell}, K_{t,u,\ell}, L_{t,u,\ell}, M_{t,u,\ell}, N_{t,u,\ell}, O_{t,u,\ell}, \text{ and } P_{t,u,\ell}$ are similar, we provide the proof for only one representative from each set. Thanks to \eqref{6gf}, \eqref{epsi}, \eqref{epsim} and Lemma \ref{bt}, we have
	\begin{align*}
	\sum_{n=0}^{\infty}\bar{b}_{9r+1,9s+5}^{9k+5}(n)q^n&\equiv\dfrac{f_6^{3k(3(9s+5)-2(9r+1))+5(9s+5-6r)-3}}{f_3^{6k(9s+5)+10(3s)+17}f_{12}^{3k(9s-9r+4)+5(3s-3r)+7}}\cdot \psi(-q)\\
	\sum_{n=0}^{\infty}\bar{b}_{9r+5,9s+4}^{9k+1}(n)q^n&\equiv\dfrac{f_6^{3k(3(9s+4)-2(9r+5))+(9s+4-6r)-3}}{f_3^{6k(9s+4)+6s+3}f_{12}^{3k(9s-9r-1)+3(s-r)}}\cdot \psi(-q)\\
	\sum_{n=0}^{\infty}\bar{b}_{9r+1,9s+1}^{9k+5}(n)q^n&\equiv \dfrac{f_6^{3k(3(9s+1)-2(9r+1))+5(9s+1-6r)-4}}{f_3^{6k(9s+1)+10(3s)+3}f_{12}^{(3s-3r)(9k+5)}}\cdot \psi(q)\\
	\sum_{n=0}^{\infty}\bar{b}_{9r+5,9s+2}^{9k+1}(n)q^n&\equiv\dfrac{f_6^{3k(3(9s+2)-2(9r+5))+(9s+2-6r)-4}}{f_3^{3k(2(9s+2))+6s+1}f_{12}^{(3s-3r-1)(9k+1)}}\cdot \psi(q)\\
	\sum_{n=0}^{\infty}\bar{b}_{9r+1,9s+3}^{9k+5}(n)q^n&\equiv\dfrac{f_6^{3k(3(9s+3)-2(9r+1))+5(9s+3-6r)-3}}{f_3^{6k(9s+3)+10(3s+1)}f_{12}^{3k(9s-9r+2)+5(3s-3r)+4}}\cdot \psi(q^2)\\
	\sum_{n=0}^{\infty}\bar{b}_{9r+5,9s+3}^{9k+1}(n)q^n&\equiv \dfrac{f_6^{3k(2(9s+3)-2(9r+5))+(9s+3-6r)-3}}{f_3^{6k(9s+3)+2(3s+1)}f_{12}^{3k(9s-9r-2)+(3s-3r)}}\cdot \psi(q^2)
\end{align*}
\begin{align*}
	\sum_{n=0}^{\infty}\bar{b}_{9r+4,9s+5}^{9k+1}(n)q^n&\equiv\dfrac{f_6^{3k(3(9s)-2(9r)+7)+(9s-6r)+2}}{f_3^{6k(9s+5)+2(3s)+3}f_{12}^{3k(9s-9r+1)+(3s-3r)}}\cdot \dfrac{1}{\psi(-q)}\\
	\sum_{n=0}^{\infty}\bar{b}_{9r+1,9s+2}^{9k+4}(n)q^n&\equiv\dfrac{f_6^{3k(3(9s)-2(9r)+4)+4(9s-6r)+5}}{f_3^{6k(9s+2)+8(3s)+5}f_{12}^{3k(9s-9r+1)+4(3s-3r)+1}}\cdot\dfrac{1}{\psi(-q)}\\
	\sum_{n=0}^{\infty}\bar{b}_{9r+4,9s+1}^{9k+1}(n)q^n&\equiv\dfrac{f_6^{3k(3(9s)-2(9r)-5)+(9s-6r)-2}}{f_3^{6k(9s+1)+2(3s)+1}f_{12}^{(3s-3r-1)(9k+1)}}\cdot F(q)\\
	\sum_{n=0}^{\infty}\bar{b}_{9r+1,9s+1}^{9k+4}(n)q^n&\equiv\dfrac{f_6^{3k(3(9s)-2(9r)+1)+4(9s-6r)+1}}{f_3^{6k(9s+1)+8(3s)+3}f_{12}^{(3s-3r)(9k+4)}}\cdot F(q).
		\end{align*}
		Employing the dissections for $\psi(-q), \psi(q), \psi(q^2), 1/\psi(-q)$ and $F(q)$ from Lemma \ref{6l3d} in the above congruences accordingly and then extracting the terms of the form $q^{3n}$, we obtain
	\begin{align*}
	\sum_{n=0}^{\infty}\bar{b}_{9r+1,9s+5}^{9k+5}(3n)q^n&\equiv\dfrac{f_6^{k(3(9s+5)-2(9r+1))+5(3s-2r)+11}}{f_3^{k(9s+5)+10s+7}f_{12}^{k(9s-9r+4)+5(s-r)+4}}\cdot \psi(q)\\
	\sum_{n= 0}^{\infty}\bar{b}_{9r+5,9s+4}^{9k+1}(3n)q^n&\equiv\dfrac{f_6^{k(3(9s+4)-2(9r+5))+(3s-2r)+5}}{f_3^{2k(9s+4)+2s+3}f_{12}^{k(9s-9r-1)+(s-r)+2}}\cdot \psi(-q)\\
		\sum_{n=0}^{\infty}\bar{b}_{9r+1,9s+1}^{9k+5}(3n)q^n&\equiv\dfrac{f_6^{k(3(9s+1)-2(9r+1))+5(3s-2r)-1}}{f_3^{2k(9s+1)+10s-1}f_{12}^{(9k+5)(s-r)}}\cdot\psi(q)\\
		\sum_{n=0}^{\infty}\bar{b}_{9r+5,9s+2}^{9k+1}(3n)q^n&\equiv\dfrac{f_6^{k(3(9s+2)-2(9r+5))+(3s-2r)-1}}{f_3^{k(2(9s+2))+3s-1}f_{12}^{9k(s-r)+(s-r)}}\cdot \psi(-q)\\
		\sum_{n=0}^{\infty}\bar{b}_{9r+1,9s+3}^{9k+5}(3n)q^n&\equiv\dfrac{f_6^{k(3(9s+3)-2(9r+1))+5(3s+1-2r)}}{f_3^{2k(9s+3)+10s+3}f_{12}^{k(9s-9r+2)+5(s-r)+2}}\cdot \psi(q)\\
		\sum_{n=0}^{\infty}\bar{b}_{9r+5,9s+3}^{9k+1}(3n)q^n&\equiv\dfrac{f_6^{k(3(9s+3)-2(9r+5))+(3s+1-2r)+1}}{f_3^{2k(9s+3)+2s+1}f_{12}^{k(9s-9r-2)+(s-r)+1}}\cdot \psi(-q)\\
		\sum_{n=0}^{\infty}\bar{b}_{9r+4,9s+5}^{9k+1}(3n)q^n&\equiv \dfrac{f_6^{k(3(9s)-2(9r)+7)+(3s-2r)+9}}{f_3^{k(9s+5)+2s+5}f_{12}^{k(9s-9r+1)+(s-r)+12}}\cdot \psi(-q)\\
		\sum_{n=0}^{\infty}\bar{b}_{9r+1,9s+2}^{9k+4}(3n)q^n&\equiv\dfrac{f_6^{k(3(9s)-2(9r)+4)+4(9s-6r)+9}}{f_3^{2k(9s+2)+8s+5}f_{12}^{k(9s-9r+1)+4(s-r)+12}}\cdot \psi(q)\\
		\sum_{n=0}^{\infty}\bar{b}_{9r+4,9s+1}^{9k+1}(3n)q^n&\equiv\dfrac{f_6^{k(3(9s)-2(9r)-5)+(3s-2r)-2}}{f_3^{2k(9s+1)+2s-3}f_{12}^{(s-r)(9k+1)-3k}}\cdot \psi(-q)\\
		\sum_{n=0}^{\infty}\bar{b}_{9r+1,9s+1}^{9k+4}(3n)q^n&\equiv \dfrac{f_6^{k(3(9s)-2(9r)+1)+4(3s-2r)-2}}{f_3^{k(9s+1)+8s-3}f_{12}^{(s-r)(9k+4)}}\cdot \psi(q).
	\end{align*}
On using \eqref{epsip} in the above congruences and then finally extracting the terms of the form $q^{3n+2}$, we complete the proof.
\end{proof}
\section{Proof of Theorem \ref{6thec1}}\label{6s6}
In this section, we use the following lemma and Hecke eigen form theory to prove congruences modulo powers of $2$.
\begin{lemma}\label{6lhec}
For integers $n,\ell,k\ge0$, $r,\alpha, \beta\ge 1$ such that $\alpha$ and $\beta$ are odd, we have
\begin{align*}
\sum_{n=0}^{\infty}\bar{b}^{2^\ell\beta}_{r, 2^k\alpha}(2n+1)q^n	\equiv 2^{k+\ell+1}f_1^{12}\pmod{2^{k+\ell+2}}.
\end{align*}
\end{lemma}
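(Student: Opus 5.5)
We would work directly from the generating function \eqref{6gf}, isolating the single factor that carries odd powers of $q$ and applying the $2$-dissection of Lemma \ref{ld}. Put $s=2^k\alpha$, $K=2^\ell\beta$ and $N:=sK=2^{k+\ell}\alpha\beta$. Then
\begin{align*}
\sum_{n=0}^{\infty}\bar{b}^{K}_{r,s}(n)q^n=\left(\frac{1}{f_1^2}\right)^{N}\cdot\frac{f_2^{(3s-2r)K}}{f_4^{(s-r)K}},
\end{align*}
and the second factor is a series in $q^2$. By \eqref{edf2}, write $1/f_1^2=A+2qB$, where $A=\frac{f_8^5}{f_2^5f_{16}^2}$ and $B=\frac{f_4^2f_{16}^2}{f_2^5f_8}$ are both functions of $q^2$.

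Expanding with the binomial theorem, $(A+2qB)^N=\sum_{j}\binom{N}{j}2^jq^jB^jA^{N-j}$. The odd powers of $q$ come exactly from the odd $j$. We use the standard estimate $v_2\binom{N}{j}\ge v_2(N)-v_2(j)$. For odd $j$ this gives $v_2\bigl(2^j\binom{N}{j}\bigr)\ge j+k+\ell$. Hence every odd $j\ge3$ contributes a multiple of $2^{k+\ell+3}$, and only $j=1$ survives. Modulo $2^{k+\ell+2}$ the odd part is
\begin{align*}
2N\,qBA^{N-1}\frac{f_2^{(3s-2r)K}}{f_4^{(s-r)K}}=2^{k+\ell+1}\alpha\beta\, qBA^{N-1}\frac{f_2^{(3s-2r)K}}{f_4^{(s-r)K}}.
\end{align*}
We then extract the terms $q^{2n+1}$, divide by $q$ and replace $q^2$ by $q$. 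Since the prefactor is already $2^{k+\ell+1}$, it remains only to determine the resulting quotient modulo $2$. The factor $\alpha\beta$ is odd, so it is $\equiv1\pmod 2$ and can be dropped.

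For the reduction modulo $2$ we apply Lemma \ref{bt} with $p=2$ repeatedly, using $f_{2^a m}\equiv f_m^{2^a}\pmod 2$. This gives $A\equiv f_2^{20-5-16}=f_2^{-1}$ and $B\equiv f_2^{4+16-5-4}=f_2^{11}$. After the substitution $q^2\mapsto q$ the quotient becomes, modulo $2$,
\begin{align*}
f_1^{11}\cdot f_1^{-(N-1)}\cdot f_1^{(3s-2r)K}\cdot f_1^{-2(s-r)K}=f_1^{12-N+3N-2rK-2N+2rK}=f_1^{12}.
\end{align*}
Multiplying by $2^{k+\ell+1}$ yields the stated congruence modulo $2^{k+\ell+2}$.

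The only delicate step is the $2$-adic valuation bound on the binomial coefficients, which is what discards all odd $j\ge3$. It follows from $\binom{N}{j}=\frac{N}{j}\binom{N-1}{j-1}$. Everything else is bookkeeping of exponents, and that bookkeeping is exactly what makes the exponent collapse to $12$ independently of $r$, $s$ and $K$.
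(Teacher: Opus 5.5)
Your proposal is correct and follows essentially the same route as the paper: the $2$-dissection \eqref{edf2} of $1/f_1^2$, binomial expansion of its $2^{k+\ell}\alpha\beta$-th power, discarding every odd index $j\ge 3$ because $2^{j}\binom{N}{j}\equiv 0 \pmod{2^{k+\ell+2}}$, and a final reduction modulo $2$ with Lemma \ref{bt} that collapses the exponent to $12$. Your bound $v_2\binom{N}{j}\ge v_2(N)-v_2(j)$ justifies the divisibility step that the paper only asserts, and your exponent bookkeeping is cleaner than the paper's displayed intermediate congruence.
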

\begin{proof}
Thanks to \eqref{6gf}, we have
\begin{align*}
\sum_{n=0}^{\infty}\bar{b}^{2^\ell\beta}_{r, 2^k\alpha}(n)q^n=\dfrac{f_2^{(3\cdot2^k\alpha-2r)2^\ell\beta}}{f_1^{2^{k+\ell+1}\alpha\beta}f_4^{(2^k\alpha-r)2^\ell\beta}}.
\end{align*}
Employing \eqref{edf2} in the above equation, we get
\begin{align*}
	\sum_{n=0}^{\infty}\bar{b}^{2^\ell\beta}_{r, 2^k\alpha}(n)q^n&=\dfrac{f_2^{(3\cdot2^k\alpha-2r)2^\ell\beta}}{f_4^{(2^k\alpha-r)2^\ell\beta}}\left(\dfrac{f_8^5}{f_2^5f_{16}^2}+2q\dfrac{f_4^2f_{16}^2}{f_2^5f_8}\right)^{2^{k+\ell}\alpha\beta}\\
	&=\dfrac{f_2^{(3\cdot2^k\alpha-2r)2^\ell\beta}}{f_4^{(2^k\alpha-r)2^\ell\beta}}\left(\dfrac{f_8^5}{f_2^5f_{16}^2}\right)^{2^{k+\ell}\alpha\beta}\sum_{i=0}^{2^{k+\ell}\alpha\beta}\binom{2^{k+\ell}\alpha\beta}{i}2^iq^i \dfrac{f_4^{2i}f_{16}^{4i}}{f_8^{6i}}.
\end{align*}
Extracting the terms of the form $q^{2n+1}$ from the above equation, we obtain
\begin{align*}
\sum_{n=0}^{\infty}\bar{b}^{2^\ell\beta}_{r, 2^k\alpha}(2n+1)q^n&=\dfrac{f_1^{(3\cdot2^k\alpha-2r)2^\ell\beta}}{f_2^{(2^k\alpha-r)2^\ell\beta}}\left(\dfrac{f_4^5}{f_1^5f_{8}^2}\right)^{2^{k+\ell}\alpha\beta}\sum_{i=0}^{2^{k+\ell}\alpha\beta}\binom{2^{k+\ell}\alpha\beta}{2i+1}2^{2i+1}q^i \dfrac{f_2^{4i+2}f_{8}^{8i+4}}{f_4^{12i+6}}.
\end{align*}
Since $\alpha$ and $\beta$ are odd integers, and for $i\ge1$,
\begin{align*}
2^{2i+1}\binom{2^{k+\ell}\alpha\beta}{2i+1}\equiv 0\pmod{2^{k+\ell+2}}, 
\end{align*}
we have
\begin{align*}
\sum_{n=0}^{\infty}\bar{b}^{2^\ell\beta}_{r, 2^k\alpha}(2n+1)q^n&\equiv 2^{k+\ell+1}\dfrac{f_4^{5\cdot2^{k+\ell}\alpha\beta-6}f_8^{4-2^{k+\ell+1}\alpha\beta}}{f_1^{(2^k\alpha-r)2^{\ell+1}\beta}f_2^{(2^k\alpha-r)2^\ell\beta-2}}\pmod{2^{k+\ell+2}}.
\end{align*}
Applying Lemma \ref{bt} in the above congruence, yields the desired result.
\end{proof}

\begin{proof}[\textbf{Proof of Theorem \ref{6thec1}}]
	Thanks to Lemma \ref{bt} and \ref{6lhec}, we have
	\begin{align*}
		\sum_{n=0}^{\infty}\bar{b}^{2^\ell\beta}_{r,2^k\alpha}(2n+1)q^{2n+1}\equiv 2^{k+\ell+1}q f_4^6\pmod{2^{k+\ell+2}},
	\end{align*}
which implies
	\begin{align}\label{ehec1}
		\sum_{n=0}^{\infty}\bar{b}^{2^\ell\beta}_{r,2^k\alpha}(2n+1)q^{2n+1}\equiv 2^{k+\ell+1}\eta^6(4z)\pmod{2^{k+\ell+2}}.
	\end{align}
	From Theorems \ref{6t2.1} and \ref{6t2.2}, it is easy to find that $\eta^6(4z)\in S_3\left(\Gamma_0(16), \left(\frac{-4^6}{.}\right)\right)$. Let $\sum_{n=1}^{\infty}c(n)q^n$ be the Fourier expansion of  $\eta^6(4z)$. Thus, we have
	\begin{align*}
	\sum_{n=1}^{\infty}c(n)q^n=q-6q^5+9q^9+10q^{13}-30q^{17}+\dots
	\end{align*} 
	and we note that $c(n)=0$, if $n\not\equiv 1\pmod 4$ for all $n\ge 0$.\\ 
	Comparing coefficients in \eqref{ehec1}, we obtain
	\begin{align}\label{ehec2}
		\bar{b}^{2^\ell\beta}_{r,2^k\alpha}(2n+1)\equiv 2^{k+\ell+1}c(2n+1)\pmod{2^{k+\ell+2}}.
	\end{align}
	Since  $\eta^6(4z)$ is a Hecke eigenform \cite{martin}, using \eqref{2.7} and \eqref{dhe}, we obtain
	\begin{align}\label{e1.4.4}
		\eta^6(4z)\mid T_p=\sum_{n=1}^{\infty}\left(c(pn)+p^2\left(\frac{-4^6}{p}\right)c\left(\frac{n}{p}\right)\right)q^n=\lambda(p)\sum_{n=1}^{\infty}c(n)q^n.
	\end{align}
	Comparing the coefficients in \eqref{e1.4.4}, we get
	\begin{align}\label{e1.4.5}
		c(pn)+p^2\left(\frac{-1}{p}\right)c\left(\frac{n}{p}\right)=\lambda(p)c(n).
	\end{align}
	Since $c(1)=1$, setting $n=1$ in \eqref{e1.4.5}, we obtain
	\begin{align*}
		c(p)=\lambda(p).
	\end{align*}
	Also since $c(p)=0$ for all $p\not\equiv 1\pmod 4$, the above equation  implies $\lambda(p)=0$, for all $p\not\equiv 1\pmod 4$.	Note that $\left(\frac{-1}{p}\right)=-1$ if $p\not\equiv 1\pmod 4$. Therefore, from \eqref{e1.4.5}, we have 
	\begin{align}\label{e1.4.7}
		c(pn)-p^2c\left(\frac{n}{p}\right)=0,
	\end{align}
	for all $p\not\equiv 1\pmod 4$.
	Replacing $n$ by $pn+m$ with $gcd(n,m)=1$ in \eqref{e1.4.7}, we see that for all $n\ge 0$ with $p\nmid n$,
	\begin{align}\label{e1.4.8}
		c(p^2n+pm)=0.
	\end{align}
	Replacing $n$ by $2n-pm+1$ in \eqref{e1.4.8} and employing the resulting equation in \eqref{ehec2}, we obtain
	\begin{align}\label{e1.4.10}
		\bar{b}_{r,2^k\alpha}^{2^\ell\beta}\left(2np^2+pm(1-p^2)+p^2\right)\equiv 0\pmod {2^{k+\ell+2}},
	\end{align}
	with $gcd(m,p)=1$.\\
	Since $p\equiv 3\pmod 4$, we have $4\mid(1-p^2)$ and $gcd(1-p^2,p)=1$. Thus, when $m$ runs over a residue system excluding the multiples of $p$, so does $m(1-p^2)$. Therefore, for $p\nmid t$, we can rewrite \eqref{e1.4.10} as 
	\begin{align}\label{e1.4.11}
	\bar{b}_{r,2^k\alpha}^{2^\ell\beta}(2p^2n+pt+p^2)\equiv 0\pmod {2^{k+\ell+2}}.
	\end{align}
	Replacing $n$ by $pn$ in \eqref{e1.4.7}, we obtain
	\begin{align}\label{e1.4.12}
		c(p^2n)=p^2c(n).
	\end{align}
Now, replacing $n$ by $2n+1$ in \eqref{e1.4.12}, we get
	\begin{align}\label{e1.4.13}
		c(2p^2n+p^2)=p^2c(2n+1).
	\end{align}
	From \eqref{ehec2} and \eqref{e1.4.13}, we obtain
	\begin{align}
		\bar{b}_{r,2^k\alpha}^{2^\ell\beta}(2p^2n+p^2)\equiv p^2	\bar{b}_{r,2^k\alpha}^{2^\ell\beta}(2n+1)\pmod{2^{k+\ell+2}}.
	\end{align}
	Let $p_i\ge 3$ be primes such that $p_i\equiv 3\pmod 4$. Since
	\begin{align*}
		2p_1^2p_2^2\cdots p_k^2 n+p_1^2p_2^2\cdots p_j^2=2p_1^2\left(p_2^2\cdots p_k^2 n+\dfrac{p_2^2\cdots p_j^2 -1}{4}\right)+p_1^2,
	\end{align*}
	using \eqref{e1.4.13} repeatedly and then employing \eqref{e1.4.11}, we obtain that
	\begin{align*}
		&\bar{b}_{r,2^k\alpha}^{2^\ell\beta}\left(2p_1^2p_2^2\cdots p_k^2p_{j+1}^2n+p_1^2p_2^2\cdots p_k^2p_{j+1}\left(t+p_{j+1}\right)\right)\\
		&\equiv p_1^2 \bar{b}_{r,2^k\alpha}^{2^\ell\beta}\left(2p_1^2p_2^2\cdots p_j^2p_{j+1}^2n+p_1^2p_2^2\cdots p_k^2p_{j+1}\left(t+p_{j+1}\right)\right)\\
		\vdots\\
		&\equiv (p_1p_2\cdots p_j)^2\bar{b}_{r,2^k\alpha}^{2^\ell\beta}\left(2p_{j+1}^2n+p_{j+1}(t+p_{j+1})\right)\\
		&\equiv 0\pmod{2^{k+\ell+2}},
	\end{align*}
	when $t\not\equiv 0\pmod{p_{j+1}}$. This completes the proof.
\end{proof}
\section{Concluding remarks}\label{6s7}
We conclude the paper with the following remarks.
\begin{enumerate}
	\item Similar to congruences modulo $3$ for progressions  $3n+1, 3n+2, 9n+3$ and $9n+6$ as in Theorems \ref{6tmodp1}-\ref{6tmod3.4}, the results of Chacon and Sellers \cite[Theorems~4.5-4.8]{chacon} can be generalized as stated below and proved using identical arguments. 
	\begin{theorem}
		For all $r,s,k\ge 0$ and $n\ge 0$, 
		\begin{align*}
			\bar{b}_{27r+2,27s+1}^{27k+5}(27n+9)&\equiv\bar{b}_{27r+2,27s+1}^{27k+5}(27n+18)\equiv 0\pmod{3}\\
			\bar{b}_{27r+2,27s+1}^{27k+8}(27n+18)&\equiv 0\pmod{3}\\
			\bar{b}_{27r+2,27s+1}^{27k+10}(27n+18)&\equiv 0\pmod{3}\\
			\bar{b}_{27r+2,27s+1}^{27k+14}(27n+18)&\equiv 0\pmod{3}.
		\end{align*}
	\end{theorem}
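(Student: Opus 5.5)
The plan is to reduce every congruence in the statement to the corresponding congruence of Chacon and Sellers \cite[Theorems~4.5--4.8]{chacon} for overcubic partition $c$-tuples, with $c\in\{5,8,10,14\}$. The reduction multiplies by a power series in $q^{27}$, and such a factor does not mix residue classes modulo $27$.

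First write $R=27r+2$, $S=27s+1$ and $K=27k+c$, and compute the exponents in \eqref{6gf}:
\begin{align*}
(3S-2R)K&=\bigl(27(3s-2r)-1\bigr)(27k+c)=27A-c,\\
2SK&=27B+2c,\\
(S-R)K&=\bigl(27(s-r)-1\bigr)(27k+c)=27C-c,
\end{align*}
for explicit integers $A,B,C$ depending on $r,s,k,c$. Here $A$ and $C$ may be negative, but every $f_j^{\pm1}$ is an invertible element of $\mathbb{Z}[[q]]$.

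Lemma \ref{bt} with $p=3$, $a=3$ gives $f_j^{27}\equiv f_{3j}^{9}\pmod{27}$, hence modulo $3$. Applying $f_j^3\equiv f_{3j}\pmod 3$ twice more gives $f_j^{27}\equiv f_{27j}\pmod 3$. Taking inverses in $\mathbb{Z}[[q]]$ gives the same relation for negative powers. Therefore
\begin{align*}
\sum_{n=0}^{\infty}\bar{b}^{27k+c}_{27r+2,27s+1}(n)q^n\equiv \frac{f_{54}^{A}}{f_{27}^{B}f_{108}^{C}}\cdot\frac{f_4^{c}}{f_1^{2c}f_2^{c}}=H(q^{27})\sum_{n=0}^{\infty}\bar{b}^{c}_{2,1}(n)q^n\pmod 3,
\end{align*}
where $H\in\mathbb{Z}[[q]]$.

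Extract the terms $q^{27n+9}$ (respectively $q^{27n+18}$). Since $H(q^{27})$ only involves exponents divisible by $27$, each coefficient $\bar{b}^{K}_{R,S}(27n+9)$ is congruent to an integer combination of the values $\bar{b}^{c}_{2,1}(27m+9)$ with $0\le m\le n$. The same holds with $18$ in place of $9$. Thus it suffices to know, for every $m\ge0$, that
\begin{align*}
\bar{b}^{5}_{2,1}(27m+9)&\equiv 0\pmod 3,\\
\bar{b}^{c}_{2,1}(27m+18)&\equiv 0\pmod 3\quad\text{for } c=5,8,10,14.
\end{align*}
These are exactly the $k=0$ instances of \cite[Theorems~4.5--4.8]{chacon}, which are stated for $\bar{b}^{27k+c}_{2,1}$ with all $k\ge0$. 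This finishes the argument in the same way as Remark 1 after Theorem \ref{6tmodp1}.

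The main obstacle I expect is the last step. I need to confirm that the cited results of \cite{chacon} genuinely include the base case $k=0$ for each listed $c$ and residue class. If they do not, I would reprove the $c$-tuple congruences directly. For this I would reduce $f_4^c/(f_1^{2c}f_2^c)$ modulo $3$ via Lemma \ref{bt} to a power series in $q^3$ times a small factor, built from $\psi(\pm q)$, $1/\psi(-q)$ or $F(q)$. I would then 3-dissect that factor using \eqref{epsip}, \eqref{epsir} and \eqref{ef}, and repeat the reduction twice, exactly as in the proofs of Theorems \ref{6tmod3.3} and \ref{6tmod3.4}. At the final stage I would check that the required residue class is absent from the $3$-dissection of $\psi(q)$, namely the class $2$ when $\psi(q)$ appears in the variable $q$, or the class $1$ when it appears as $\psi(q^2)$. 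This bookkeeping of exponents is routine but error-prone, so it is where I would spend the care.
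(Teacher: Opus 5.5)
Your argument is correct, but its main route differs from the one the paper points to. The paper writes out no proof of this statement. It says the congruences follow by ``identical arguments'' to Section 5, and the closest analogues there, Theorems \ref{6tmod3.2}--\ref{6tmod3.4}, are proved by a different method. There, Lemma \ref{bt} is applied to the full exponents with $r,s,k$ carried along, followed by iterated $3$-dissections of $\psi(\pm q)$, $1/\psi(-q)$, $F(q)$ and finally \eqref{epsip}. For modulus $27$ that means three rounds of extraction with general parameters, which is essentially your fallback.

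Your main route instead splits the generating function once, modulo $3$, as $H(q^{27})\cdot f_4^{c}/(f_1^{2c}f_2^{c})$. Your exponent bookkeeping ($27A-c$, $27B+2c$, $27C-c$) is right, and $f_j^{27}\equiv f_{27j}\pmod 3$ does pass to negative powers as you say. So each target coefficient is a $\mathbb{Z}$-linear combination of values $\bar b^{c}_{2,1}(27m+9)$ or $\bar b^{c}_{2,1}(27m+18)$. This is the device the paper itself uses in proving Theorem \ref{6tmodp1}, where it factors out a series in $q^{p}$ and appeals to the base case \eqref{6e1}.

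What your route buys is brevity and transparency. It shows that passing from $(2,1,c)$ to $(27r+2,27s+1,27k+c)$ is purely formal, so all the arithmetic content sits in four fixed base cases. The same reasoning lifts any congruence for $\bar b^{c}_{a,b}$ on a progression modulo $3^{e}$ to $\bar b^{3^{e}k+c}_{3^{e}r+a,\,3^{e}s+b}$; it would likewise shorten Theorems \ref{6tmod3.2}--\ref{6tmod3.4}.

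The cost is reliance on \cite{chacon} for the base cases. The paper presents this statement as the generalization of \cite[Theorems~4.5--4.8]{chacon}, i.e.\ those results are its $r=s=0$ case indexed by $27k+c$, $k\ge 0$, so the $k=0$ instances you need should be covered. If they are not, your fallback is the paper's self-contained but computation-heavy method. Your description of its last step is accurate: the class $2$ is absent for $\psi(\pm q)$ and the class $1$ for $\psi(q^2)$.
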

	\noindent It is natural to expect an extension of these results to other families. However, deriving families similar to Theorems \ref{6tmod3.2}-\ref{6tmod3.4} for progressions $27n+9$ and $27n+18$ requires considerably more elaborate computations. We therefore leave investigation of such congruence families as an open problem.
	\item The present work investigates congruences modulo powers of $2$ only for certain arithmetic progression. Additional congruences modulo powers of 2 analogous to Lemma \ref{6lhec} can be established for other arithmetic progressions. Such congruence relations together with Newman's theorems or theory of hecke eigen form may lead to broader range of congruence results modulo higher powers of 2. A detailed investigation of these congruences is left for interested readers.
\end{enumerate}
	
	\bigskip
	\bigskip
	
	\noindent
	Department of Mathematics\\
	Ramanujan School of Mathematical Sciences\\
	Pondicherry University\\
	Puducherry- 605 014, India.\\

	\noindent Email: \texttt{tthejithamp@pondiuni.ac.in}
	
	\noindent	Email: \texttt{dr.fathima.sn@pondiuni.ac.in} (\Letter)
	\end{document}